\definecolor{tabcolor}{rgb}{.105,.410,.113}
\theoremstyle{plain}
\renewcommand{\theequation}{\arabic{section}.\arabic{equation}}
\renewcommand\thefigure{\thesection.\@arabic\c@figure}
\renewcommand\thetable{\thesection.\@arabic\c@table}
\newtheorem{thm}{\bf Theorem}
\renewcommand{\thethm}{\arabic{section}.\arabic{thm}}
\newenvironment{theorem}{\begin{thm}} {\end{thm}}
\newtheorem{cor}{\bf Corollary}
\newenvironment{corollary}{\begin{cor}} {\end{cor}}
\newtheorem{lem}{\bf Lemma}
\renewcommand{\thelem}{\arabic{section}.\arabic{lem}}
\newenvironment{lemma}{\begin{lem}}{\end{lem}}
\theoremstyle{remark}
\theoremstyle{definition} 
\newtheorem{definition}{Definition}[section] 
\newtheorem{remark}{Remark}[section]
\newtheorem{example}{Example}
\newtheorem{pro}{\bf Proposition} 
\renewcommand{\thepro}{\arabic{section}.\arabic{pro}} 
\newenvironment{proposition}{\begin{pro}} {\end{pro}}
\newcommand{\refe}[1]{{\rm (\ref{#1})}}
\newcommand \dsum {\displaystyle\sum}
\newcommand \dint {\displaystyle\int}
\def\CP{{\mathcal P}}
\def \bx{\bs x}
\def \by{\bs y}
\def \bxi{\bs \xi}
\def\i{\mathrm{i}}
\def\e{\mathrm{e}}
\def\d{\mathrm{d}}
\def\C{{\widehat{C}}}
\def\CC{ {\mathbb{C}} }
\def\hp{\hat{\partial}}
\def\H{\mathcal{B}}
\def\RR{\mathbb{R}}
\def\ZZ{\mathbb{Z}}
\def\sph{\mathbb{S}}
\def\CP{{\mathcal P}}
\def\CH{{\mathcal H}}
\def\s{\sigma}
\def\NN{\mathbb{N}}
\def\BB^d{\mathbb{B}^d}
\newcommand{\bs}[1]{\boldsymbol{#1}}
\begin{document}
\graphicspath{{./figs/}}

\title[Generalised Hermite functions]
{Generalised Hermite  Spectral Methods for PDEs involving integral fractional Laplacian and Schr\"{o}dinger operators}
\author[C. Sheng,\, S. Ma,\, H. Li,\, L. Wang  \& \ L. Jia]{Changtao Sheng${}^{1}$,  \; Suna Ma${}^{2},$  \; Huiyuan Li${}^{3}$,  \; Li-Lian Wang${}^{1}$ \;and\; Lueling Jia${}^{4}$}

\thanks{${}^{1}$Division of Mathematical Sciences, School of Physical and Mathematical Sciences, Nanyang Technological University,
		637371, Singapore. The research of the  authors is partially supported by Singapore MOE AcRF Tier 2 Grants:  MOE2018-T2-1-059 and MOE2017-T2-2-144. Emails: ctsheng@ntu.edu.sg (C. Sheng) and  lilian@ntu.edu.sg (L. Wang).\\
		\indent ${}^{2}$School of Mathematical Sciences, Peking University, Beijing 100871, China.  Email: masuna@csrc.ac.cn (S. Ma).\\
		\indent  ${}^{3}$State Key Laboratory of Computer Science/Laboratory of Parallel Computing, Institute of Software,
Chinese Academy of Sciences, Beijing 100190, China. The work of this author is partially supported by the National Natural Science Foundation of China (No. 11871455 and 11971016).  Email: huiyuan@iscas.ac.cn (H. Li).\\
\indent  ${}^{4}$Beijing Computational Science Research Center, Beijing, 100193, P.R. China.  The research of this author is supported in part by the National Natural Science Foundation of China  (NSFC 11871092 and NSAF U1930402). Email: lljia@csrc.ac.cn (L. Jia).
         }

\subjclass[2000]{65N35, 65N25, 35Q40, 33C45, 65M70.}	
\keywords{Generalised Hermite polynomials/functions, 
 integral fractional Laplacian, Schr\"{o}dinger operators with fractional power potential, M\"untz-type generalised Hermite functions.}

\begin{abstract} In this paper, we introduce two new families of generalised Hermite polynomials/functions (GHPs/GHFs) in arbitrary dimensions, and develop  efficient and accurate generalised Hermite spectral algorithms for PDEs with integral fractional Laplacian (IFL)
and/or  Schr\"{o}dinger operators in $\mathbb R^d.$  As a generalisation of the G. Szeg\"{o}'s  family in 1D (1939),  the first family of  
GHPs (resp. GHFs) are orthogonal with respect to $|\bx|^{2\mu} \e^{-|\bx|^2}$ (resp. $|\bx |^{2\mu}$) in $\mathbb R^d$. 
We further define adjoint generalised Hermite functions (A-GHFs) which have an interwoven connection with the corresponding GHFs
through  the Fourier transform,  and which are orthogonal with respect to the inner product $[u,v]_{H^s(\mathbb R^d)}=((-\Delta)^{s/ 2}u, (-\Delta)^{s/2} v )_{\mathbb R^d}$ associated with the IFL of order $s>0$.  
Thus,  the spectral-Galerkin method using A-GHFs as basis functions  leads to a diagonal stiffness matrix for  the IFL (which is known to be notoriously difficult and expensive to discretise). The new basis also finds efficient and accurate in solving  PDEs with the 
fractional  Schr\"{o}dinger operator: $(-\Delta)^s +|\bs x|^{2\mu}$  with $s\in (0,1]$ and $\mu>-1/2.$ 
Following the same spirit, we construct the second family of  GHFs, dubbed as M\"untz-type generalised Hermite functions (M-GHFs), which are orthogonal with respect to an inner product associated with the underlying Schr\"{o}dinger operator, and  are tailored to the singularity of the solution at the origin.
We demonstrate that the M\"untz-type GHF spectral method leads to sparse matrices and spectrally accurate to some  Schr\"{o}dinger eigenvalue problems. 
\end{abstract}

\maketitle

\vspace*{-15pt}

\section{Introduction}

In the seminal monograph  \cite[P. 371]{GS1939} (1939),  Szeg\"o first introduced  a generalisation of the Hermite polynomials (denoted by $H^{(\mu)}_n(x), \, \mu>-1/2, \,  x\in \mathbb R:=(-\infty,\infty)$ and dubbed as generalised Hermite polynomials (GHPs)), through an explicit second-order differential equation in an exercise problem. 
 The GHPs defined therein  are orthogonal with respect to  the weight function $|x|^{2\mu}\e^{-x^2}$.   Chihara perhaps was among the first who systematically  studied  the properties of  the GHPs, and  the associated generalised Hermite functions (GHFs):  $\widehat H^{(\mu)}_n(x):=\e^{-x^2/2} H^{(\mu)}_n(x)$ (orthogonal with respect to  the weight function  $|x|^{2\mu}$),   in his PhD thesis  \cite[entitled as ``Generalised Hermite Polynomials"]{chihara1955} (1955).  Later, some  standard properties were collected in his book   \cite{chihara2011} (1978). 
Whereas the usual Hermite polynomials/functions  are well-studied especially in  spectral approximations,
 there have been very limited works
on this generalised family   (see, e.g.,  \cite{SCF1964,rose1994,Rosler1998,twoclasses} for  the properties or further generalisations). Indeed, to the best of our knowledge,  the generalised Hermite spectral methods in both theory and applications are still under-explored, and worthy of deep investigation.

 The main purpose  of this paper is to introduce two new families of GHPs/GHFs in arbitrary spatial dimensions, and explore their applications in solutions of PDEs involving the integral fractional Laplacian and/or  Schr\"odinger operators.  
 
Firstly, we construct  the $d$-dimensional GHPs $\{H_{k,\ell}^{\mu,n}(\bx)\}$ (cf.\! \eqref{MHP}) and GHFs $\{\widehat H_{k,\ell}^{\mu,n}(\bx)\}$ (cf.\! \eqref{LagFun01}),  which are  orthogonal with respect to the weight functions 
  $|\bs x|^{2\mu}\e^{-|\bs x|^2}$ and $|\bs x|^{2\mu}$ in $\mathbb R^d$ with  $\mu>-\frac12, $ respectively.  In one dimension,  they reduce to Szeg\"o's GHPs/GHFs (up to a constant multiple).  More importantly,  we  introduce  for the first time a family of  adjoint
  generalised Hermite functions (A-GHFs)  $\{\widecheck{H}^{\mu,n}_{k,\ell}(\bx)\}$ (cf.\! \eqref{adher}) with some appealing properties. For example,   this adjoint pair is closely interwoven through the Fourier transform   
  \begin{equation}\label{uHs0001}
  \mathscr{F}[\widehat{H}_{k,\ell}^{\mu,n}](\bxi)=\i^{n+2k}\widecheck{H}_{k,\ell}^{\mu,n}(\bxi),\quad  \mathscr{F}[\widecheck{H}_{k,\ell}^{\mu,n}](\bxi)=(-\i)^{n+2k}\widehat{H}_{k,\ell}^{\mu,n}(\bxi).
  \end{equation}
  More notably, the A-GHFs are orthogonal with respect to the inner product that induces the 
  so-called Gagliardo semi-norm of the fractional Sobolev space $H^s(\mathbb R^d)$ for $s\in (0,1],$ that is, 
\begin{equation}\label{uHs000}
[\widecheck{H}_{k, \ell}^{s, n}, \widecheck{H}_{j, \iota}^{s, m}]_{H^s(\mathbb R^d)}=\big((-\Delta)^{\frac{s}{2}}\widecheck{H}_{k, \ell}^{s, n},(-\Delta)^{\frac{s}{2}}\widecheck{H}_{j, \iota}^{s, m}\big)_{\mathbb R^d}=\delta_{jk} \delta_{mn} \delta_{\ell\iota},
\end{equation}
where $(-\Delta)^s$ is the integral fractional Laplacian operator (cf.\! \eqref{eq:LapDF}-\eqref{fracLap-defn}). As an immediate consequence, the use of  A-GHFs as basis functions in the spectral-Galerkin approximation of the integral fractional Laplacian leads to a diagonal  stiffness matrix.  Indeed, it has been a nightmare for computing this matrix in  a usual tensorial Hermite 
spectral method when $d= 3$ (cf.\! \cite{mao2017}).  On the other hand, this new basis offers an efficient  algorithm for solving PDEs with the fractional Schr\"{o}dinger operator: $(-\Delta)^s+V(\bs x)$ with $V(\bs x)=|\bs x|^{2\mu}$ or more general 
$V(\bs x)=|\bs x|^{2\mu}W(\bs x)$ with $s\in (0,1]$ and $\mu>-1/2$ (where $W$ is smooth).  In light of the orthogonality  \eqref{uHs000},   the stiffness matrix under the Galerkin framework using the basis $\{\widecheck{H}_{k, \ell}^{s, n}\}$    becomes  diagonal, 
while the singular potential $|\bs x|^{2\mu}$ can be treated as the (orthogonal) weight function  by using the connection relation 
between $\{\widecheck{H}_{k, \ell}^{s, n}\}$  and $\{\widehat{H}_{j, l}^{\mu, n}\}$ (cf.\! \eqref{coefrd1} and \eqref{adher}). We remark that there is a growing interest in the numerics of the fractional Schr\"{o}dinger problems (see, e.g., 
\cite{Bao2019,Bao2020} and the references therein). 
 
It is noteworthy that the  3D GHPs with $\mu=0$ and an appropriate scaling reduce to the  Burnett polynomials  \cite{burnett1936} (1936), which are mutually orthogonal with respect to the Maxwellian  
 $\mathcal{M}(\bx)={(2 \pi)^{-3/2}} \e^{-{|\bx|^{2}}/{2}},$ and a useful  basis  in solving kinetic equations (cf.\! \cite{Cai2018,Hu2020} and the references therein).  Remarkably, we can show  that the GHFs with $\mu=0$ are eigenfunctions of the Schr\"{o}dinger operator with the square potential (cf.\! \eqref{ghrdiff2}): 
 \begin{equation}\label{ghrdiff200} 
 \big(\!-\Delta+|\bx|^{2}\big) \widehat{H}_{k, \ell}^{0,n}(\bx)=(4 k+2 n+d) \widehat{H}_{k, \ell}^{0,n}(\bx). 
\end{equation}
In fact, such a notion in 2D has been  explored  in \cite{BLS2009}  for computing the ground states and dynamics of the Bose-Einstein condensation. 

\smallskip 
It is of  fundamental and  practical interest to search for the explicit eigen-functions  for 
the Schr\"{o}dinger operator with a more general potential or some variance, which serves as  the second purpose of this paper.  The main finding in Theorem \ref{thmorth}  is  that for $\theta>\max(1-d/2,0),$ there exists a family of  M\"untz-type GHFs $\{\widehat {\mathcal H}^{\theta,n}_{k,\ell}\}$ (cf.\! \eqref{ghfrdx}) satisfying 
\begin{equation}\label{orthsch00}
\big(\!-\Delta+\theta^2 |\bx|^{4\theta-2}\big)\widehat {\mathcal H}^{\theta,n}_{k,\ell}(\bx)=  2\theta^2 \big((n+d/2-1)/\theta+2k+1\big)\, |\bx|^{2\theta-2}\widehat {\mathcal H}^{\theta,n}_{k,\ell}(\bx).
\end{equation}
In particular, for $\theta=1/2,$ we find 
 \begin{equation}\label{orthsch0011}
\Big(\!\!-\Delta-\frac{n+k+(d-1)/2} { |\bx|} \Big)\widehat {\mathcal H}^{\frac12,n}_{k,\ell}(\bx)=  -\frac 1 4  \widehat {\mathcal H}^{\frac12,n}_{k,\ell}(\bx).
\end{equation}
 With a proper scaling, this gives  the eigen-pairs of the Schr\"odinger operator with Coulomb potential: 
$-\frac12\Delta- \frac{|Z|}{|\bx|}$, where $Z$ is a nonzero constant (cf.\! Corollary \ref{corScheig}).
By construction, this new family of functions in the radial direction turns out to be  some special M\"{u}ntz functions, so it is dubbed as  M\"untz-type for distinction.  
We remark that a M\"{u}ntz polynomial  $\sum_{k=0}^{n} a_{k} r^{\lambda_{k}}$ is generated by 
 a M\"{u}ntz sequence: $\lambda_{0}<\lambda_{1}<\lambda_{2}<\cdots<\lambda_n$ (cf.\!  \cite{Muntz} (1914)), and
 the set of M\"{u}ntz polynomials with  $\lambda_{0}=0,$ and real coefficients $\{a_k\}$ are dense in the space of continuous functions if and only if $\sum_{k=0}^{\infty} \lambda_{k}^{-1}=\infty$ (cf. \cite{Szasz}). Such a tool finds very effective in approximating singular solutions (see, e.g., \cite{ShenWang2011,Hou2017}).  Indeed,  we shall demonstrate in  Section \ref{sect6Schro} that the  M\"untz-type GHF spectral-Galerkin approach is the method of choice of the  Schr\"odinger eigenvalue problems with the fractional power potential in terms of both the efficiency and accuracy.   We shall see that spectral accuracy can be achieved in fitting the singular eigenfunctions.

  In Table \ref{severalHermite},  we provide a roadmap of two types of generalisations and some of their 
  properties that are essential for developing efficient spectral algorithms for PDEs with integral fractional Laplacian  in  Section \ref{Section3}  and the Schr\"{o}dinger eigenvalue problems in 
  Section \ref{sect6Schro}. 
 \begin{table}[!th] 
  \renewcommand*{\arraystretch}{1.5}
 \caption{\small Two families of GHPs/GHFs and their essential properties} \label{severalHermite}
 \vspace*{-6pt} {\small 
\begin{tabular}{|c|c|c|}\hline
   & Type & Property 
\\ \hline  
 \multirow{6}{5.5em}{Generalised Hermite polynomials \& functions}&$d$-D GHP: $H_{k,\ell}^{\mu,n}(\bx)$ in \eqref{MHP}  & Orthogonal w.r.t. $|\bx|^{2\mu}\e^{-|\bs x|^2};$
 \\ 
  &1D GHP:\, $H_{n}^{(\mu)}(x)$ in  \cite{GS1939}  & Burnett polynomials  \cite{burnett1936}, if $\mu=0$ 
  \\ \cline{2-3}
 & $d$-D GHF: $\widehat{H}^{\mu,n}_{k,\ell}(\bx)$ in \eqref{LagFun01}  & Orthogonal w.r.t. $|\bx|^{2\mu};$
 \\ 
 &1D GHF: $\widehat{H}^{(\mu)}_{n}(x)$ in \eqref{GHermfunc}  &  Eigenfunctions of $-\Delta+|\bs x|^2$, if $\mu=0$ 
  \\ \cline{2-3}
 &$d$-D A-GHF: $\widecheck{H}^{\mu,n}_{k,\ell}(\bx)$ in \eqref{adher} & Orthogonal w.r.t.  $((-\Delta)^{\frac{\mu}2}\cdot,(-\Delta)^{\frac{\mu}2}\cdot)_{\RR^d};$
 \\
  &1D A-GHF: $\widecheck{H}^{(\mu)}_{n}(x)$ in \eqref{adher1d1} &  Diagonal stiffness matrix for $(-\Delta)^\mu,$ if $\mu>0$ 
   \\  \hline
 \multirow{4}{5.5em}{M\"untz-type generalised Hermite functions}&$d$-D M-GHF: $\widehat{{\mathcal H}}^{\theta,n}_{k,\ell}(\bx)$ in \eqref{ghfrdx} & Orthogonal w.r.t.  $(  \nabla\, \cdot\,,  \nabla\,\cdot )_{\RR^d}
+\theta^2  (|\bx|^{4\theta-2}\cdot, \cdot )_{ \RR^d}$
\\ \cline{2-3}
 &$\widehat{{\mathcal H}}_{k,\ell}^{\frac 1 2,n}(\bx)$ in Subsection \ref{coulomb}  & Eigenfunctions (with a scaling) of
$ -\frac 12 \Delta  +\frac{|Z|}{|\bx |}$  
\\ \cline{2-3}
  & \multirow{2}{13em}{  $\widehat{{\mathcal H}}_{k,\ell}^{\frac1{\mu+1},n}(\bx)$ in Subsection \ref{coulomb2}}  &
  Optimal basis for the  Schr\"{o}dinger operator:  \\
    &  &    $-\frac 12 \Delta  +|Z| |\bx |^{\frac{2\nu-2\mu}{\mu+1}}$ for $\mu,\nu$ in \eqref{munucond} 
     \\   \hline
\end{tabular}
}
 \end{table}
 \smallskip

\section{Generalized Hermite polynomials/functions in multiple dimensions }\label{Section3}
\setcounter{equation}{0} \setcounter{lem}{0} \setcounter{thm}{0} \setcounter{cor}{0}\setcounter{remark}{0} 
In this section, we first make necessary preparations by introducing some notation and properties of the spherical harmonic functions.   We then  define the multi-dimensional GHPs and GHFs, and construct the adjoint GHFs. We present various appealing properties of these new families of basis functions, and elaborate on their differences and connections with the most relevant Hermite-related polynomials/functions in literature.   

\subsection{Preliminaries}\label{pre}


Let $\mathbb R=(-\infty, \infty),$ $\mathbb N=\{1,2,\cdots\},$ and ${\mathbb N}_0:=\{0\} \cup {\mathbb N}.$
For $d\in \mathbb N,$ we denote by $\mathbb{R}^d$ the  $d$-dimensional Euclidean space equipped with 
the inner product and norm  $\langle\bx,\bs y\rangle:= \sum^d_{i=1} x_iy_i$, and  $r=|\bx| := \sqrt{ \langle\bx, \bx\rangle}$, respectively,
for any $\bx,\bs y\in \mathbb{R}^d.$    Denote the unit vector along any nonzero vector $\bs x$ by $\hat \bx =\bx/|\bx|.$ 

We next introduce the $d$-dimensional spherical harmonics, upon which  we define the $d$-dimensional generalised Hermite polynomials/functions. Here, we follow the setting in the book \cite{Dai2013}. 
Let $\CP_n^d$ be the space of all  real $d$-dimensional homogeneous polynomials of degree
$n$ as follows
\begin{equation}\label{Pnnn}
\mathcal{P}_{n}^{d}={\rm span}\big\{\bs{x}^{\bs{k}}=x_{1}^{k_{1}} x_{2}^{k_{2}} \ldots x_{d}^{k_{d}}:  k_{1}+k_{2}+\cdots+k_{d}=n\big\}.
\end{equation}
As an important subspace of $\mathcal{P}_{n}^{d},$  the  space of all real harmonic polynomials of degree $n$ is defined as 
\begin{equation}\label{Hnnn}
\mathcal{H}_n^d:=\big\{P\in \CP_n^d : \Delta P(\bx)=0 \big\}.
\end{equation}
It is known that  the dimensionality 
\begin{equation}\label{cpCp}
    \dim (\CP_n^d)  = \binom{n+d-1}{n},  \quad  \dim (\mathcal{H}_n^d) = \binom{n+d-1}{n} - \binom{n+d-3}{n-2}:=a_n^d,
\end{equation}
where it is understood that for $n=0,1,$ the value of the second binomial coefficient is zero  
 (cf. \cite[(1.1.5)]{Dai2013}). In fact,  for $d=1,$ all  harmonic polynomials are spanned by  $\{1, x\}.$

Recall that the $d$-dimensional spherical coordinates read
\begin{equation}\label{d_sphere}
\begin{split}
&x_{1}=r\cos\theta_{1};\; x_{2}=r\sin\theta_{1}\cos\theta_{2};\;\cdots\cdots; \;x_{d-1}=r\sin\theta_{1}\cdots\sin\theta_{d-2}\cos\theta_{d-1}; 
\\ &x_{d}=r\sin\theta_{1}\cdots\sin\theta_{d-2}\sin\theta_{d-1}, \;\;\; \theta_1, \cdots, \theta_{d-2}\in [0,\pi],  \;\;  \theta_{d-1}\in[0,2\pi],
\end{split}
\end{equation}
with  the spherical volume element 
\begin{equation}\label{coord1} 
\begin{split} \d \bx=r^{d-1} \sin ^{d-2}\left(\theta_{1}\right) \sin ^{d-3}\left(\theta_{2}\right) \cdots \sin \left(\theta_{d-2}\right) \d r \,\d \theta_{1}\, \d \theta_{2} \cdots \d \theta_{d-1}:=r^{d-1} \d r\, \d\sigma (\hat\bx). 
\end{split}
\end{equation}
In spherical  coordinates, the $d$-dimensional Laplacian  takes the form
\begin{equation} \label{eq:Delta}
   \Delta = \partial^2_r+ \frac{d-1}{r} \partial_r + \frac{1}{r^2} \Delta_{\mathbb{S}^{d-1}},
\end{equation}
where $\Delta_{\mathbb{S}^{d-1}}$ is the Laplace-Beltrami operator on the   unit sphere $\mathbb{S}^{d-1}:=\{ \bx \in\RR^d : |\bx|=1\}.$ Define the inner product of $L^2(\mathbb S^{d-1})$ as
 $$
        \langle f, g \rangle_{\,\mathbb{S}^{d-1}}: = \int_{\mathbb{S}^{d-1}} f(\hat \bx ) g(\hat \bx)\, \d\s(\hat \bx).
$$

The $d$-dimensional spherical harmonics are the restrictions of harmonic polynomials in $\mathcal{H}_n^d$ to  $\mathbb{S}^{d-1}$,   denoted by  $\mathcal{H}_n^d\big|_{\mathbb{S}^{d-1}}$. It is important to remark  the correspondence between a harmonic polynomial and the related  spherical harmonic   function (cf. \cite[Ch.\! 1]{Dai2013}):  for any $Y(\bx)\in  \mathcal{H}_n^d,$ 
  \begin{equation}\label{reponsd}
  Y(\bx)=|\bx|^nY( \bx/|\bx|) =r^nY({\hat \bx}), 
  \end{equation}
with   $Y(\hat \bx) \in \mathcal{H}_n^d\big|_{\mathbb{S}^{d-1}}$.  It is noteworthy that $Y(\bx) $ is a homogeneous polynomial in $\mathbb R^d$, while 
  $Y(\hat \bx)$ is a non-polynomial function on the unit sphere.  
  For $n\in \NN_0$, let $\{Y_\ell^n: 1 \le \ell \le a_n^d\}$ be the  real (orthogonal)  spherical harmonic basis of $\CH_n^d|_{\mathbb{S}^{d-1}}$, and note that the spherical harmonics of different degree are mutually  orthogonal   (cf. \cite[Thm. 1.1.2]{Dai2013}), i.e.,  $\mathcal{H}_n^d|_{\mathbb{S}^{d-1}}\perp  \mathcal{H}_m^d|_{\mathbb{S}^{d-1}}$  for $m\not=n.$
Thus, we have 
\begin{align}
\label{Yorth}
 \langle Y_\ell^n, Y_\iota^m\rangle_{\,\mathbb{S}^{d-1}}= \int_{\mathbb{S}^{d-1}} Y_\ell^n(\hat \bx ) Y_\iota^m(\hat \bx)\, \d\s(\hat \bx)=\delta_{nm} \delta_{\ell\iota},
 \quad (\ell,n), (\iota, m) \in \Upsilon_\infty^d,
 \end{align}
  where we introduce two-related the index sets 
  \begin{equation}\label{UpsD}
\begin{split}
& \Upsilon_\infty^d=\big\{(\ell,n) : 1\le \ell\le a_n^d,\;\; 0\le n<\infty, \;\; \ell, n\in \mathbb N_0\big\}, \\
& \Upsilon_N^d=\big\{(\ell,n) :   1\le \ell \le a_n^d,\;\; 0\le n\le N, \;\; \ell,  n\in \mathbb N_0 \big\}.
\end{split}
\end{equation}
Remarkably, the spherical harmonic basis functions  are  eigenfunctions of the Laplace-Beltrami problem:
\begin{equation} \label{eq:LaplaceBeltrami}
        \Delta_{\mathbb S^{d-1}}  Y_\ell^n (\hat \bx) = - n(n+d-2) Y_\ell^n(\hat \bx). 
\end{equation}
  
The second building block of the GHPs/GHFs is the generalized Laguerre polynomials, denoted by $L_k^{(\alpha)}(z)$  for $z\in (0, \infty)$ and $\alpha>-1$. They are orthogonal with respect to the weight function $z^{\alpha}\e^{-z}$  (cf. 
 Szeg\"{o} \cite{GS1939}):
\begin{align}
\int_0^{\infty}L_k^{(\alpha)}(z)L_j^{(\alpha)}(z)\,z^{\alpha}\,\e^{-z}\,\d z=\frac{\Gamma(k+\alpha+1)}{k!}\delta_{kj}, \quad k,j\in\NN_0.\label{Lagpoly}
\end{align}
We  refer to \cite{GS1939} and \cite[Ch.\! 7]{ShenTao2011} for the properties of the generalised Laguerre polynomials.

\subsection{Generalized Hermite polynomials/functions in $\RR^d$} \label{sec2.2}
We define the $d$-dimensional GHPs and GHFs as follows. 
\begin{definition}\label{ddGHPF}  {\em For  $\mu>-\frac 1 2,$ $k\in \mathbb N_0$ and $ (\ell,n)\in \Upsilon_\infty^d,$
we define the $d$-dimensional generalised Hermite  polynomials  as 
\begin{equation}\label{MHP}  
\begin{split}
H_{k,\ell}^{\mu,n}(\bx) & :=H_{k,\ell}^{\mu,n}(\bx;d)=L_k^{(n+\frac{d-2}{2}+\mu)}(|\bx|^2)Y_{\ell}^n(\bx)\\
 & =r^nL_k^{(n+\frac{d-2}{2}+\mu)}(r^2)Y_{\ell}^n(\hat{\bx}),\quad \bx =r \, \hat \bx, 
\end{split}
\end{equation}
and   the $d$-dimensional generalised Hermite  functions  as 
\begin{equation}
\label{LagFun01}
 \widehat{H}_{k,\ell}^{\mu,n}(\bx)=\sqrt{1/\gamma^{\mu, d}_{k,n}}\, \e^{-\frac{|\bx|^2}{2}}H_{k,\ell}^{\mu,n}(\bx), 
 \quad {\rm where}\quad \gamma^{\mu, d}_{k,n}:=\frac{\Gamma(k+n+\frac{d}{2}+\mu)}{2\, k!}.
\end{equation}}
\end{definition}

\begin{remark}\label{absisA}{\em As we shall see later,   the one-dimensional GHPs  {\rm(}up to a constant multiple{\rm)}  coincide with the one-dimensional generalisation first introduced in Szeg\"{o} \cite[P. 371]{GS1939} {\rm(1939)}, from which we name the above new families. 
Indeed, they include several special types of multivariate Hermite polynomials with many applications in both theory and numerics. For example, 
the three-dimensional GHPs with $\mu=0$ and a proper scaling lead to the Burnett polynomials \cite{burnett1936} {\rm(1936)} which
have  rich applications in  kinetic theory  {\rm(}see \cite{Cai2018} and the references therein{\rm)}.  
 The notion of  constructing  special Laguerre-Fourier  basis functions {\rm(}relevant to the two-dimensional GHPs with  $\mu=0${\rm)}    for  computing the ground states and dynamics of Bose-Einstein condensation \cite{Pitaevskii 2003book}  was found 
 effective  in e.g.,  \cite{BLS2009}.  Very recently,  the PhD dissertation   \cite{yurova2020} discussed  the extension of the tensorial {\rm(}usual{\rm)} Hermite polynomials to the generalised anisotropic  Hermite functions of the form
\begin{equation}\label{orthsAH}
H_{\ell}^{G, E, t}(\bs x)=\frac{t^{|\ell| / 2}}{\sqrt{2|\ell| \ell !}} H_{\ell}\left(G^{T} {\bs x}\right) \exp \left(-{\bs x}^{T} E^{T} E {\bs x}\right),
\end{equation}
where $E, G \in \mathbb{R}^{d \times d}$ are arbitrary invertible matrices, $t>0$ is a parameter 
and $H_{\ell}(\bs x)=H_{\ell_1}(x_1)\cdots H_{\ell_d}(x_d)$ are tensor
product of 1D Hermite polynomials.  Interesting applications in quantum dynamics  \cite{Lubich2008book} were discussed therein. 
\qed }
\end{remark}


Before we consider the applications of the GHPs and GHFs, we first present some of their appealing properties. 
By construction, they enjoy the following important  orthogonality. 
\begin{thm}\label{thm2.1}
For $\mu>-\frac 1 2, $  $k,j\in \mathbb N_0$ and $ (\ell,n), (\iota, m) \in \Upsilon_\infty^d,$ 
 the GHPs are mutually orthogonal  with respect to the weight function $|\bx|^{2\mu}\e^{-|\bx|^2}$, namely, 
\begin{equation}
\label{OMH}
\begin{split}
 \int_{\RR^d} H_{k,\ell}^{\mu,n}(\bx)H_{j,\iota}^{\mu,m}(\bx)\,|\bx|^{2\mu}\,\e^{-|\bx|^2}\,\d \bx=\gamma^{\mu, d}_{k,n}\, \delta_{mn}\delta_{kj}\delta_{\ell\iota}, 
\end{split}
\end{equation}
and the GHFs are orthonormal, viz., 
\begin{equation} \label{HerfunOrthA0}
 \int_{\RR^d} \widehat{H}_{k,\ell}^{\mu,n}(\bx) \widehat{H}_{j,\iota}^{\mu,m}(\bx)\, |\bx |^{2\mu}\,\d \bx
 =\delta_{mn}\delta_{k j}\delta_{\ell \iota}.
\end{equation}
\end{thm}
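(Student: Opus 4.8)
The plan is to reduce the $d$-dimensional orthogonality to the one-dimensional orthogonality of the generalised Laguerre polynomials \eqref{Lagpoly} together with the orthonormality of the spherical harmonics \eqref{Yorth}. First I would write $\bx = r\hat\bx$ with $r=|\bx|$ and use the polar decomposition of the measure $\d\bx = r^{d-1}\,\d r\,\d\s(\hat\bx)$ from \eqref{coord1}. Substituting the definition \eqref{MHP}, the product $H_{k,\ell}^{\mu,n}(\bx)H_{j,\iota}^{\mu,m}(\bx)\,|\bx|^{2\mu}\e^{-|\bx|^2}$ factorises as
\begin{equation*}
r^{n+m} L_k^{(n+\frac{d-2}{2}+\mu)}(r^2)\,L_j^{(m+\frac{d-2}{2}+\mu)}(r^2)\,r^{2\mu}\e^{-r^2}\cdot Y_\ell^n(\hat\bx)Y_\iota^m(\hat\bx),
\end{equation*}
so the integral over $\RR^d$ splits into a radial integral over $(0,\infty)$ and an angular integral over $\mathbb S^{d-1}$.

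The angular factor is $\langle Y_\ell^n, Y_\iota^m\rangle_{\mathbb S^{d-1}} = \delta_{mn}\delta_{\ell\iota}$ by \eqref{Yorth}, which immediately kills all cross terms unless $m=n$ and $\ell=\iota$. On the surviving diagonal in the angular indices we set $m=n$, and the radial integral becomes $\int_0^\infty L_k^{(\alpha)}(r^2)L_j^{(\alpha)}(r^2)\,r^{2n+2\mu+d-1}\e^{-r^2}\,\d r$ with $\alpha = n+\frac{d-2}{2}+\mu$. The change of variables $z=r^2$, $\d z = 2r\,\d r$, converts this to $\tfrac12\int_0^\infty L_k^{(\alpha)}(z)L_j^{(\alpha)}(z)\,z^{n+\mu+\frac{d-2}{2}}\e^{-z}\,\d z = \tfrac12\int_0^\infty L_k^{(\alpha)}(z)L_j^{(\alpha)}(z)\,z^{\alpha}\e^{-z}\,\d z$, since the exponent $n+\mu+\frac{d-2}{2}$ equals exactly $\alpha$. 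Now \eqref{Lagpoly} applies and gives $\tfrac12\cdot\frac{\Gamma(k+\alpha+1)}{k!}\delta_{kj} = \frac{\Gamma(k+n+\frac{d}{2}+\mu)}{2\,k!}\delta_{kj} = \gamma^{\mu,d}_{k,n}\delta_{kj}$, which is precisely the claimed normalisation constant in \eqref{OMH}. One should note the parameter constraint: the Laguerre weight is integrable and \eqref{Lagpoly} is valid provided $\alpha = n+\frac{d-2}{2}+\mu > -1$, which holds since $\mu>-\tfrac12$, $d\ge 1$ and $n\ge 0$ (the worst case $d=1$, $n=0$ gives $\alpha = \mu-\tfrac12 > -1$).

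For the GHF identity \eqref{HerfunOrthA0}, I would simply observe that $\widehat H_{k,\ell}^{\mu,n}(\bx)\widehat H_{j,\iota}^{\mu,m}(\bx)\,|\bx|^{2\mu} = (\gamma^{\mu,d}_{k,n}\gamma^{\mu,d}_{j,m})^{-1/2}\,H_{k,\ell}^{\mu,n}(\bx)H_{j,\iota}^{\mu,m}(\bx)\,|\bx|^{2\mu}\e^{-|\bx|^2}$, so the integral is $(\gamma^{\mu,d}_{k,n}\gamma^{\mu,d}_{j,m})^{-1/2}\gamma^{\mu,d}_{k,n}\delta_{mn}\delta_{kj}\delta_{\ell\iota}$ by \eqref{OMH}; the Kronecker deltas force $(j,m)=(k,n)$ wherever the expression is nonzero, at which point the prefactor is $(\gamma^{\mu,d}_{k,n})^{-1}\gamma^{\mu,d}_{k,n}=1$, yielding $\delta_{mn}\delta_{kj}\delta_{\ell\iota}$. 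There is no real obstacle here; the only point requiring a modicum of care is bookkeeping the exponent in the radial integral to verify it matches $\alpha$ exactly (so that \eqref{Lagpoly} is directly applicable without a residual power of $z$), and checking the admissibility range of $\alpha$. The whole argument is a short computation once the polar decomposition is set up.
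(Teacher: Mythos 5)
Your proposal is correct and follows essentially the same route as the paper's proof: polar decomposition of the measure, the spherical-harmonic orthogonality \eqref{Yorth} to handle the angular factor, the substitution $z=r^2$ to reduce the radial integral to the Laguerre orthogonality \eqref{Lagpoly}, and the immediate deduction of \eqref{HerfunOrthA0} from \eqref{OMH} via the normalisation in \eqref{LagFun01}. Your added check that $\alpha=n+\frac{d-2}{2}+\mu>-1$ is a sensible extra detail the paper leaves implicit.
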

\begin{proof} The orthogonality \eqref{HerfunOrthA0}  is  a direct consequence of  \eqref{LagFun01} and
\eqref{OMH}, so we only need to show  \eqref{OMH}. 
In view of the definition \eqref{MHP}, we use   the spherical coordinates transformation \eqref{d_sphere}-\eqref{coord1}, and 
find from the orthogonality \eqref{Yorth} and  \eqref{Lagpoly} that 
\begin{eqnarray}
&& \int_{\RR^d} H_{k,\ell}^{\mu,n}(\bx)H_{j,\iota}^{\mu,m}(\bx)\, |\bx|^{2\mu} \, \e^{-|\bx|^2} \, \d \bx \nonumber
\\&&=\int_{0}^{\infty}  L_k^{(n+\frac{d-2}{2}+\mu)}(r^2) \, L_j^{(m+\frac{d-2}{2}+\mu)}(r^2)  \, r^{2\mu+2n+d-1} \e^{-r^2}\d r \int_{\sph^{d-1}} Y^n_{\ell}(\hat \bx) Y^m_{\iota}(\hat \bx) \,\d \sigma(\hat \bx) \nonumber
\\&&=\delta_{mn}\delta_{\ell\iota}\int_{0}^{\infty}  L_k^{(n+\frac{d-2}{2}+\mu)}(r^2)  \, L_j^{(m+\frac{d-2}{2}+\mu)}(r^2)  \,  r^{2\mu+2n+d-1} \e^{-r^2}\,\d r \nonumber
\\&&= \frac12\delta_{mn}\delta_{\ell\iota}  \int_{0}^{\infty} L_k^{(n+\frac{d-2}{2}+\mu)}(\rho)\, L_j^{(n+\frac{d-2}{2}+\mu)}(\rho)  \,  \rho^{n+\frac{d-2}{2}+\mu} \e^{-\rho}\,\d \rho\nonumber
\\&&=\frac{\Gamma(k+n+\frac{d}{2}+\mu)}{2 k!} \delta_{mn}\delta_{kj}\delta_{\ell\iota} =\gamma^{\mu, d}_{k,n}\, \delta_{mn}\delta_{kj}\delta_{\ell\iota},  \nonumber 
\end{eqnarray}
which yields  \eqref{OMH} and  ends the proof. 
\end{proof}


%


The $d$-dimensional GHPs/GHFs satisfy the recurrence relations. 
\begin{proposition}\label{propA} {\em For $\mu>-\frac 1 2 $ and fixed  $(\ell,n)\in \Upsilon_\infty^d,$ we have the following recurrence relations in $k:$ 
\begin{equation}\label{eq:Lemeq2} 
 (k+1)H_{k+1,\ell}^{\mu,n}(\bx)  = \big(2k+n+\frac{d}{2}+\mu-|\bx|^2\big) H_{k,\ell}^{\mu,n}(\bx)-\big(k+n+\frac{d}{2}-1+\mu\big)H_{k-1,\ell}^{\mu,n}(\bx),  
\end{equation}
and for the GHFs, 
\begin{equation}\begin{split}\label{threerda}
a_k\, \widehat{H}_{k+1, \ell}^{\mu,n}(\bx)=(b_{k}-|\bx|^2) \widehat{H}_{k, \ell}^{\mu,n}(\bx)-c_{k}\widehat{H}_{k-1, \ell}^{\mu,n}(\bx),
\end{split}\end{equation}
where
\begin{equation*}\begin{split}
&a_k=\sqrt{(k+1)(k+n+d/2+\mu)},\;\; b_{k}=2 k+n+d/2+\mu,\;\;c_{k}=\sqrt{k(k-1+n+d/2+\mu)}.
\end{split}\end{equation*}}
\end{proposition}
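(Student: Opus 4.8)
The plan is to reduce the $d$-dimensional recurrences to the classical three-term recurrence for the generalised Laguerre polynomials and then transfer the normalisation. First I would recall the standard Laguerre recurrence (see Szeg\"o \cite{GS1939} or \cite[Ch.\! 7]{ShenTao2011}): for $\alpha>-1$,
\begin{equation*}
(k+1) L_{k+1}^{(\alpha)}(z) = (2k+\alpha+1-z) L_k^{(\alpha)}(z) - (k+\alpha) L_{k-1}^{(\alpha)}(z).
\end{equation*}
Setting $\alpha = n+\frac{d-2}{2}+\mu$ and $z = |\bx|^2$, so that $2k+\alpha+1 = 2k+n+\frac d2+\mu$ and $k+\alpha = k+n+\frac d2-1+\mu$, and then multiplying both sides by $r^n Y_\ell^n(\hat\bx) = Y_\ell^n(\bx)$, the definition \eqref{MHP} of $H_{k,\ell}^{\mu,n}$ immediately yields \eqref{eq:Lemeq2}. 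There is essentially no obstacle here; it is a direct substitution.

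Next I would obtain \eqref{threerda} from \eqref{eq:Lemeq2}. Multiplying \eqref{eq:Lemeq2} by $\e^{-|\bx|^2/2}$ and using \eqref{LagFun01}, which gives $H_{k,\ell}^{\mu,n} = \sqrt{\gamma_{k,n}^{\mu,d}}\, \e^{|\bx|^2/2}\widehat H_{k,\ell}^{\mu,n}$, one gets
\begin{equation*}
(k+1)\sqrt{\gamma_{k+1,n}^{\mu,d}}\, \widehat H_{k+1,\ell}^{\mu,n}(\bx) = \big(b_k-|\bx|^2\big)\sqrt{\gamma_{k,n}^{\mu,d}}\, \widehat H_{k,\ell}^{\mu,n}(\bx) - \big(k+n+\tfrac d2-1+\mu\big)\sqrt{\gamma_{k-1,n}^{\mu,d}}\, \widehat H_{k-1,\ell}^{\mu,n}(\bx),
\end{equation*}
with $b_k = 2k+n+\frac d2+\mu$. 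Dividing through by $\sqrt{\gamma_{k,n}^{\mu,d}}$ then reduces the claim to two elementary ratio identities. Using $\gamma_{k,n}^{\mu,d} = \Gamma(k+n+\frac d2+\mu)/(2\,k!)$, one checks
\begin{equation*}
(k+1)\sqrt{\frac{\gamma_{k+1,n}^{\mu,d}}{\gamma_{k,n}^{\mu,d}}} = (k+1)\sqrt{\frac{k+n+\frac d2+\mu}{k+1}} = \sqrt{(k+1)\big(k+n+\tfrac d2+\mu\big)} = a_k,
\end{equation*}
and similarly
\begin{equation*}
\big(k+n+\tfrac d2-1+\mu\big)\sqrt{\frac{\gamma_{k-1,n}^{\mu,d}}{\gamma_{k,n}^{\mu,d}}} = \big(k+n+\tfrac d2-1+\mu\big)\sqrt{\frac{k}{k+n+\frac d2-1+\mu}} = \sqrt{k\big(k-1+n+\tfrac d2+\mu\big)} = c_k,
\end{equation*}
which is exactly \eqref{threerda}.

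The only point requiring a modicum of care — and what I would flag as the main (albeit minor) obstacle — is the bookkeeping of the shifted $\Gamma$-ratios and making sure the $k=0$ base case is consistent, i.e.\ that the $c_0$ term vanishes (it does, since $c_0 = 0$) so that the recurrence is well-posed with the convention $\widehat H_{-1,\ell}^{\mu,n} \equiv 0$; one should also note $a_k, c_k$ are real and positive precisely because $n+\frac d2+\mu > 0$ under the hypothesis $\mu > -\frac12$ and $d\ge 1$. Beyond that, the proof is a routine transcription of the classical Laguerre recurrence through the definitions, with the spherical-harmonic factor $Y_\ell^n(\hat\bx)$ merely going along for the ride since the recurrence is in the index $k$ only.
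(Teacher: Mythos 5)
Your proof is correct and follows essentially the same route as the paper: specialise the Laguerre three-term recurrence to $\alpha=n+\frac{d-2}{2}+\mu$, $z=|\bx|^2$ to get \eqref{eq:Lemeq2}, then substitute $H_{k,\ell}^{\mu,n}=\sqrt{\gamma^{\mu,d}_{k,n}}\,\e^{|\bx|^2/2}\widehat H_{k,\ell}^{\mu,n}$ and work out the $\Gamma$-ratios. Your explicit verification of $a_k$ and $c_k$ and the remark about the $k=0$ case are correct details the paper leaves implicit.
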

\begin{proof}  Recall  the  three-term recurrence relation of the Laguerre polynomials (cf. \cite{GS1939}):
\begin{equation}
\label{eq:Lagthreeterm}
\begin{split}
& (k+1) L_{k+1}^{(\alpha)}(z)=(2k+\alpha+1-z)L_{k}^{(\alpha)}(z)-(k+\alpha)L_{k-1}^{(\alpha)}(z).
\end{split}
\end{equation}
Then the relation \eqref{eq:Lemeq2}  is a direct consequence of \eqref{MHP}   and \eqref{eq:Lagthreeterm}. 

From \eqref{LagFun01}, we have 
\begin{equation}
\label{LagFun0100}
H_{k,\ell}^{\mu,n}(\bx)= \sqrt{\gamma^{\mu, d}_{k,n}}\, \e^{\frac{|\bx|^2}{2}}  \widehat{H}_{k,\ell}^{\mu,n}(\bx),
\end{equation}
so substituting it into \eqref{eq:Lemeq2} and working out the constants, we obtain \eqref{threerda}.
\end{proof}


The  GHFs with  different parameters  are connected through the following identity, which finds very useful in the algorithm development.
\begin{proposition}\label{ghfexpandrd}
{\em For $\mu,\nu >-\frac 1 2$ and $(\ell,n)\in \Upsilon_\infty^d,$  there holds 
\begin{equation}\label{coefrd1}
 \widehat{H}_{k,\ell}^{\mu,n}(\bx)=\sum_{j=0}^k  {}^{\mu}_{\nu}\CC^{k}_{j}\,\widehat{H}_{j,\ell}^{\nu,n}(\bx),\quad \bx \in \mathbb R^d, \;\; k\in \mathbb N_0, 
 \end{equation}
 where the connection coefficients are given by 
 \begin{equation}\label{coefrd2} 
 {}^{\mu}_{\nu}\CC^k_j=\frac{\Gamma(k-j+\mu-\nu)}{\Gamma(\mu-\nu)\,(k-j)!}\, \sqrt{\frac{k!\,\Gamma(j+n+\frac{d}{2}+\nu)}{j!\,\Gamma(k+n+\frac{d}{2}+\mu)}}\,.
 \end{equation} 
}
 \end{proposition}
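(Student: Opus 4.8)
The plan is to reduce the $d$-dimensional identity \eqref{coefrd1} to a one-dimensional connection formula for generalised Laguerre polynomials with different parameters. Since $\widehat{H}_{k,\ell}^{\mu,n}(\bx) = \sqrt{1/\gamma^{\mu,d}_{k,n}}\, \e^{-|\bx|^2/2}\, r^n L_k^{(n+\frac{d-2}{2}+\mu)}(r^2)\, Y_\ell^n(\hat\bx)$, the factors $\e^{-|\bx|^2/2}$, $r^n$ and $Y_\ell^n(\hat\bx)$ are common to both sides (they do not depend on $\mu$ or $\nu$ except through normalisation). So \eqref{coefrd1} is equivalent to the scalar identity
\begin{equation*}
\sqrt{\frac{1}{\gamma^{\mu,d}_{k,n}}}\, L_k^{(\alpha_\mu)}(\rho) = \sum_{j=0}^k {}^{\mu}_{\nu}\CC^k_j \sqrt{\frac{1}{\gamma^{\nu,d}_{j,n}}}\, L_j^{(\alpha_\nu)}(\rho), \qquad \rho = r^2,
\end{equation*}
where $\alpha_\mu = n+\frac{d-2}{2}+\mu$ and $\alpha_\nu = n+\frac{d-2}{2}+\nu$, so that $\alpha_\mu - \alpha_\nu = \mu-\nu$.

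First I would invoke the classical connection formula between Laguerre polynomials with different parameters (see e.g. Szeg\H{o} \cite{GS1939}): for $\beta - \alpha$ not a non-positive integer — which holds here since the paper only needs generic $\mu,\nu$, and the $\Gamma(\mu-\nu)$ in the denominator of \eqref{coefrd2} confirms this is the intended regime —
\begin{equation*}
L_k^{(\beta)}(\rho) = \sum_{j=0}^{k} \frac{\Gamma(\beta-\alpha+k-j)}{\Gamma(\beta-\alpha)\,(k-j)!}\, L_j^{(\alpha)}(\rho).
\end{equation*}
Applying this with $\beta = \alpha_\mu$ and $\alpha = \alpha_\nu$ gives
\begin{equation*}
L_k^{(\alpha_\mu)}(\rho) = \sum_{j=0}^{k} \frac{\Gamma(\mu-\nu+k-j)}{\Gamma(\mu-\nu)\,(k-j)!}\, L_j^{(\alpha_\nu)}(\rho).
\end{equation*}
The remaining step is purely bookkeeping: multiply through by $\sqrt{1/\gamma^{\mu,d}_{k,n}}$, and match the $j$-th coefficient against ${}^{\mu}_{\nu}\CC^k_j \sqrt{1/\gamma^{\nu,d}_{j,n}}$. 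Using $\gamma^{\mu,d}_{k,n} = \Gamma(k+n+\frac{d}{2}+\mu)/(2k!)$ and $\gamma^{\nu,d}_{j,n} = \Gamma(j+n+\frac{d}{2}+\nu)/(2j!)$, one finds
\begin{equation*}
{}^{\mu}_{\nu}\CC^k_j = \frac{\Gamma(\mu-\nu+k-j)}{\Gamma(\mu-\nu)\,(k-j)!}\, \sqrt{\frac{\gamma^{\nu,d}_{j,n}}{\gamma^{\mu,d}_{k,n}}} = \frac{\Gamma(k-j+\mu-\nu)}{\Gamma(\mu-\nu)\,(k-j)!}\, \sqrt{\frac{k!\,\Gamma(j+n+\frac{d}{2}+\nu)}{j!\,\Gamma(k+n+\frac{d}{2}+\mu)}},
\end{equation*}
which is exactly \eqref{coefrd2}.

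I do not anticipate a genuine obstacle here: the result is essentially the $d$-dimensional dressing of a textbook Laguerre identity, and the only care needed is (i) to record that the common factors $\e^{-|\bx|^2/2} r^n Y_\ell^n(\hat\bx)$ cancel cleanly so that $\ell$ and $n$ are mere spectators, and (ii) to track the normalisation constants $\gamma$ correctly through the square roots. If one wants a self-contained derivation of the Laguerre connection formula rather than citing it, the cleanest route is to expand $L_k^{(\alpha_\mu)}(\rho)$ in the orthogonal basis $\{L_j^{(\alpha_\nu)}\}$ with respect to the weight $\rho^{\alpha_\nu}\e^{-\rho}$ and evaluate the resulting integrals $\int_0^\infty L_k^{(\alpha_\mu)}(\rho) L_j^{(\alpha_\nu)}(\rho)\rho^{\alpha_\nu}\e^{-\rho}\,\d\rho$ using the Rodrigues formula and repeated integration by parts — but invoking the known formula is the natural choice given the paper's style.
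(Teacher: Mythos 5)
Your proposal is correct and follows exactly the paper's route: the paper likewise cites the classical Laguerre connection formula (its equation \eqref{huandi}, from Askey) and states that \eqref{coefrd1}--\eqref{coefrd2} follow from Definition \ref{ddGHPF} by direct calculation. Your write-up simply makes explicit the bookkeeping (cancellation of the common factor $\e^{-|\bx|^2/2}r^nY_\ell^n(\hat\bx)$ and the ratio of the normalisation constants $\gamma^{\mu,d}_{k,n}$, $\gamma^{\nu,d}_{j,n}$) that the paper leaves to the reader.
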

\begin{proof}
 Recall the property  of the generalized Laguerre polynomials (cf. \cite[(7.4)]{Askey1975}): 
 \begin{align}\label{huandi}
   L_k^{(\mu+\beta+1)}(z)=\sum_{j=0}^k \frac{\Gamma(k-j+\beta+1)}{\Gamma(\beta+1)(k-j)!}L_j^{(\mu)}(z),
 \end{align}
 so  we can derive the identity from  Definition \ref{ddGHPF} and direct calculation.
\end{proof}
\begin{remark}\label{munu=0} {\em As $\Gamma(0)=\infty,$ we can find that in the limiting sense: $ {}^{\mu}_{\mu}\CC^{k}_{j}=\delta_{kj}.$ \qed  }
\end{remark}


Remarkably, for $\mu=0,$ the GHFs are the eigenfunctions of the Schr\"{o}dinger operator:  $-\Delta+|\bx|^{2}$ with a square potential. This property plays an important part in the error analysis to be conducted in the forthcoming section. 
\begin{theorem}
\label{rddiffrecur2}
For $k\in \mathbb{N}_0,(\ell,n)\in \Upsilon_\infty^d$, the GHFs with $\mu=0$ satisfy 
\begin{equation}\label{ghrdiff2} 
 \big(\!-\Delta+|\bx|^{2}\big) \widehat{H}_{k, \ell}^{0,n}(\bx)=(4 k+2 n+d)\, \widehat{H}_{k, \ell}^{0,n}(\bx). 
\end{equation}
\end{theorem}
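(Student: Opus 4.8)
The plan is to reduce the $d$-dimensional identity \eqref{ghrdiff2} to a one-variable statement about the generalised Laguerre polynomials, using the separation of variables inherent in the definition \eqref{MHP}--\eqref{LagFun01}. Writing $\widehat{H}_{k,\ell}^{0,n}(\bx)=(\gamma^{0,d}_{k,n})^{-1/2}\,\e^{-r^2/2}\,r^n L_k^{(n+\frac{d-2}{2})}(r^2)\,Y_\ell^n(\hat\bx)$ with $r=|\bx|$, I would first invoke the spherical form of the Laplacian \eqref{eq:Delta} together with the Laplace--Beltrami eigenvalue relation \eqref{eq:LaplaceBeltrami}, so that the angular part contributes the factor $-n(n+d-2)/r^2$ acting on the radial profile. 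Thus it suffices to show that the radial function $\phi(r):=\e^{-r^2/2} r^n L_k^{(\alpha)}(r^2)$, with $\alpha:=n+\frac{d-2}{2}$, satisfies
\begin{equation*}
-\Big(\phi''+\frac{d-1}{r}\phi'-\frac{n(n+d-2)}{r^2}\phi\Big)+r^2\phi=(4k+2n+d)\,\phi .
\end{equation*}

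The main computational step is then to substitute $z=r^2$ and reduce this ODE to Laguerre's differential equation. Setting $\phi(r)=\e^{-z/2} z^{n/2} L_k^{(\alpha)}(z)$ and carefully applying the chain rule ($\frac{d}{dr}=2r\frac{d}{dz}$), I expect the $\e^{-z/2}$ and $z^{n/2}$ prefactors to generate precisely the terms needed to convert the radial operator into the self-adjoint Laguerre operator; the key identity I would lean on is that $L_k^{(\alpha)}$ solves $z y''+(\alpha+1-z)y'+k\,y=0$. After cancellation, the potential term $r^2\phi=z\phi$ and the leftover first-order/zeroth-order terms should combine to leave exactly the eigenvalue $2(2k+\alpha+1)=4k+2n+d$, recalling $\alpha=n+\frac{d-2}{2}$. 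An equivalent and perhaps cleaner route is to avoid the explicit ODE grind by instead using the three-term recurrence \eqref{threerda} with $\mu=0$ together with a companion "derivative" recurrence for GHFs (or the known ladder relations for Laguerre functions), from which $(-\Delta+|\bx|^2)$ can be recognised as a composition of raising and lowering operators whose product is diagonal; but the direct ODE verification is more self-contained.

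The main obstacle is bookkeeping rather than conceptual: correctly tracking the cross terms produced by differentiating the product $\e^{-r^2/2} r^n L_k^{(\alpha)}(r^2)$ twice, and ensuring the singular $r^{-2}$ contributions from $\phi''$, from $\frac{d-1}{r}\phi'$, and from the angular term $-n(n+d-2)r^{-2}\phi$ cancel exactly (they must, since $\widehat H^{0,n}_{k,\ell}$ is smooth, being a polynomial times a Gaussian). A convenient way to make this transparent is to first record that $r^n L_k^{(\alpha)}(r^2)$ is, up to normalisation, a homogeneous-degree-$n$ harmonic times a radial polynomial, so that $-\Delta$ applied to $H_{k,\ell}^{0,n}$ stays polynomial; then one only needs to handle the Gaussian factor, for which $-\Delta\big(\e^{-r^2/2}P\big)=\e^{-r^2/2}\big(-\Delta P+2\,\bx\!\cdot\!\nabla P+(|\bx|^2-d)P\big)$ for any polynomial $P$. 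Applying this with $P=H_{k,\ell}^{0,n}$ and using that $\bx\!\cdot\!\nabla$ acts on $r^n L_k^{(\alpha)}(r^2)Y_\ell^n(\hat\bx)$ as $(n+2z\frac{d}{dz})$ on the $z=r^2$ variable (Euler's relation plus the chain rule), the whole identity collapses to the single scalar relation $-\Delta P + 2\bx\!\cdot\!\nabla P = 4k\,P$ modulo the $|\bx|^2 P$ term cancelling against $r^2\phi$, which is again just Laguerre's equation in disguise. I would carry out the computation in the $z$-variable from the outset to keep the algebra short.
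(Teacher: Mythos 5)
Your proposal is correct and follows essentially the same route as the paper: separate variables via the spherical form of the Laplacian \eqref{eq:Delta} and the Laplace--Beltrami eigenrelation \eqref{eq:LaplaceBeltrami}, then reduce the claim to the radial identity for $\e^{-r^2/2}r^nL_k^{(n+d/2-1)}(r^2)$. The only difference is that the paper imports that radial identity from an external lemma (\cite[Lemma 2.1]{2018Ma}), whereas you derive it directly from Laguerre's equation $zy''+(\alpha+1-z)y'+ky=0$ after the substitution $z=r^2$ --- a computation that indeed closes (the $r^{-2}$ terms cancel and the eigenvalue $2(2k+\alpha+1)=4k+2n+d$ emerges), making your version slightly more self-contained.
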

\begin{proof}
According to  \cite[Lemma 2.1]{2018Ma} with $\alpha=n+d/2-1$ and $\beta=\alpha+1-d/2$,  we have 
\begin{equation}\label{d1}
\begin{split}
 \Big[\partial_{r}^2+\frac{d-1}{r} \partial_{r}-\frac{n(n+d-2)}{r^2}-r^2+4 k+2n+d\Big] \big[r^n L_{k}^{(n+d/2-1)}(r^2)\,\e^{-\frac{r^2}{2}} \big]=0.
\end{split}
\end{equation}
In view of $Y(\bx)=r^nY(\hat\bx)$, \refe{eq:Delta}, \refe{eq:LaplaceBeltrami}, \refe{MHP}, \refe{LagFun01} and \refe{d1}, we obtain
\begin{equation}\label{d2}
\begin{split}
-\Delta \widehat{H}^{0,n}_{k,\ell}(\bx)&=-\sqrt{1/\gamma^{0,d}_{k,n}}\Big[\partial_r^2 + \frac{d-1}{r} \partial_r- \frac{n(n+d-2)}{r^2}\Big]\, \big[r^n\,L_{k}^{(n+d/2-1)}(r^2)\,\e^{-\frac{r^2}{2}}\big]\,Y(\hat\bx)
\\&=\sqrt{1/\gamma^{0,d}_{k,n}}\big[-r^2+4 k+2n+d\big]\, \big[r^n L_{k}^{(n+d/2-1)}(r^2)\,\e^{-\frac{r^2}{2}}\big]\,Y(\hat\bx)
\\&=(-r^2+4 k+2n+d)\widehat{H}^{0,n}_{k,\ell}(\bx),
\end{split}
\end{equation} 
which leads to \refe{ghrdiff2}. 
\end{proof}

\subsection{Adjoint generalized Hermite functions in $\RR^d$} \label{sec2.3} Our efficient spectral algorithms are essentially built upon the A-GHFs. 
\begin{definition}\label{adjointGHFs}{\em 
For $\mu>-\frac 1 2$ and $(\ell,n)\in \Upsilon_\infty^d,$  the $d$-dimensional adjoint  GHFs   are defined by
 \begin{equation}\label{adher}
 \widecheck{H}_{k,\ell}^{\mu,n}(\bx)=\sum_{j=0}^k (-1)^{k-j}\; {}^{\mu}_0\CC^k_{j}\, \widehat{H}_{j,\ell}^{0,n}(\bx),
  \quad \bx \in \mathbb R^d, \;\; k\in \mathbb N_0,
 \end{equation}
 where the coefficients $\{{}^{\mu}_0\CC^k_j\}$ are given by   \refe{coefrd2}. }
\end{definition}
\begin{remark}{\em In light of the connection relation in Proposition {\rm \ref{ghfexpandrd}},  it is evident that $ \widecheck{H}_{k,\ell}^{\mu,n}(\bx)$ can be expressed as a linear combination of the counterparts $\big\{\widehat{H}_{j,\ell}^{\mu,n}(\bx)\big\}_{j=0}^k.$ \qed }
\end{remark}

It is seen from \eqref{coefrd1} (with $\nu=0$) that  the GHF can be represented as 
 \begin{equation}\label{adherGHFs}
\widehat{H}_{k,\ell}^{\mu,n}(\bx)=\sum_{j=0}^k  {}^{\mu}_0\CC^k_{j}\, \widehat{H}_{j,\ell}^{0,n}(\bx),
 \end{equation}
which only differs from its adjoint by  the signs  of the coefficients.  Notably,  such a subtlety results in an intimate relation between this adjoint pair through the Fourier transform: 
\begin{align}\label{FourierDef}
 \hat u(\bs \xi):= \mathscr{F}[u]( \bxi)=\frac{1}{(2\pi)^{\frac{d}{2}}}\int_{\mathbb{R}^d} u(\bx)\,\e^{-\i\langle \bxi,  \bx \rangle}\,\d{\bx}, \;\;\mathscr{F}^{-1}[\widehat{u}]( \bx)=\frac{1}{(2\pi)^{\frac{d}{2}}}\int_{\mathbb{R}^d} \widehat u(\bxi)\,\e^{\i\langle \bxi,  \bx \rangle}\,\d{\bxi}.
\end{align}
Moreover,  the use of A-GHFs as basis functions in a spectral-Galerkin framework can diagonalise the nonlocal integral fractional Laplacian $ (-\Delta)^s$ for $s>0$. Recall that   
for $s>0,$ the fractional Laplacian of $u\in  \mathscr{S}(\mathbb{R}^d)$ (functions of Schwarz class)
 can be naturally defined via the Fourier transform:
\begin{equation}\label{eq:LapDF}
\begin{split}
 (-\Delta)^s u(\bx)={\mathscr F}^{-1}\big[|\bxi|^{2s}  {\mathscr F}[u](\bxi)\big](\bx),\quad \bx\in {\mathbb R}^d.
\end{split}
\end{equation}
For $0<s<1,$ the fractional Laplacian can be equivalently defined by the 
point-wise formula (cf. \cite{Nezza2012BSM}):
\begin{equation}\label{fracLap-defn} 
(-\Delta)^s u(\bx)=C_{d,s}\, {\rm p.v.}\! \int_{\mathbb R^d} \frac{u(\bx)-u(\by)}{|\bx-\by|^{d+2s}}\, {\rm d}\by,\quad C_{d,s}:=
\frac{2^{2s}s\Gamma(s+d/2)}{\pi^{d/2}\Gamma(1-s)},
\end{equation}
where ``p.v." stands for the principle value.

\begin{theorem}\label{newGHfsA}
For $\mu>-\frac 1 2, (\ell,n)\in \Upsilon_\infty^d$ and $k\in \mathbb N_0,$  we have 
\begin{equation}\label{FourierFLH}
 \mathscr{F}[\widecheck{H}_{k,\ell}^{\mu,n}](\bxi)=(-\i)^{n+2k}\widehat{H}_{k,\ell}^{\mu,n}(\bxi), \quad 
 \mathscr{F}^{-1}[\widehat{H}_{k,\ell}^{\mu,n}](\bx)=\i^{n+2k}\widecheck{H}_{k,\ell}^{\mu,n}(\bx), 
\end{equation} 
and for $s>0,$
\begin{equation}\label{FourierFLH2} 
 \mathscr{F}[(-\Delta)^{s}\widecheck{H}_{k,\ell}^{\mu,n}](\bxi)=(-\i)^{n+2k}|\bxi|^{2s}\widehat{H}_{k,\ell}^{\mu,n}(\bxi).
\end{equation} 
Moreover, the adjoint GHFs are orthonormal in the sense that for $s>0,$
\begin{equation} \label{innadh1}
\big((-\Delta)^{\frac{s}{2}}\widecheck{H}_{k, \ell}^{s, n},(-\Delta)^{\frac{s}{2}}\widecheck{H}_{j, \iota}^{s, m}\big)_{\mathbb R^d}=\delta_{jk} \delta_{mn} \delta_{\ell\iota}.
\end{equation}
\end{theorem}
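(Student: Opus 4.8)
The plan is to derive everything from the two facts already in hand: the Fourier transform identity $\mathscr{F}[\widehat{H}_{k,\ell}^{0,n}](\bxi)=(-\i)^{n+2k}\widehat{H}_{k,\ell}^{0,n}(\bxi)$ (the GHFs with $\mu=0$ are eigenfunctions of $\mathscr{F}$, which follows from the Hankel-transform self-reciprocity of Laguerre functions $r^n L_k^{(n+(d-2)/2)}(r^2)\e^{-r^2/2}$ combined with the spherical-harmonic decomposition of $\mathscr{F}$), and the defining expansion \eqref{adher} of $\widecheck{H}_{k,\ell}^{\mu,n}$ versus the expansion \eqref{adherGHFs} of $\widehat{H}_{k,\ell}^{\mu,n}$ in the same basis $\{\widehat{H}_{j,\ell}^{0,n}\}_{j=0}^k$, the only difference being the sign $(-1)^{k-j}$.

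\textbf{Step 1: the Fourier pair \eqref{FourierFLH}.} Apply $\mathscr{F}$ term-by-term to \eqref{adher}. Using $\mathscr{F}[\widehat{H}_{j,\ell}^{0,n}]=(-\i)^{n+2j}\widehat{H}_{j,\ell}^{0,n}$, I get
\[
\mathscr{F}[\widecheck{H}_{k,\ell}^{\mu,n}](\bxi)=\sum_{j=0}^k(-1)^{k-j}\,{}^{\mu}_0\CC^k_j\,(-\i)^{n+2j}\widehat{H}_{j,\ell}^{0,n}(\bxi).
\]
Now $(-1)^{k-j}(-\i)^{n+2j}=(-\i)^{n+2k}\cdot(-1)^{k-j}(-\i)^{2j-2k}=(-\i)^{n+2k}(-1)^{k-j}(-1)^{j-k}=(-\i)^{n+2k}$, since $(-\i)^{2}=-1$ and $(-1)^{k-j}(-1)^{j-k}=(-1)^{2(k-j)}=1$. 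Hence the sum collapses to $(-\i)^{n+2k}\sum_{j=0}^k{}^{\mu}_0\CC^k_j\widehat{H}_{j,\ell}^{0,n}(\bxi)=(-\i)^{n+2k}\widehat{H}_{k,\ell}^{\mu,n}(\bxi)$ by \eqref{adherGHFs}. Applying $\mathscr{F}^{-1}$ to both sides (or repeating the argument with $\mathscr{F}^{-1}$) gives the second identity.

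\textbf{Step 2: the fractional identity \eqref{FourierFLH2}.} This is immediate from the definition \eqref{eq:LapDF}: $\mathscr{F}[(-\Delta)^s u](\bxi)=|\bxi|^{2s}\mathscr{F}[u](\bxi)$, so $\mathscr{F}[(-\Delta)^s\widecheck{H}_{k,\ell}^{\mu,n}](\bxi)=|\bxi|^{2s}(-\i)^{n+2k}\widehat{H}_{k,\ell}^{\mu,n}(\bxi)$.

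\textbf{Step 3: the orthonormality \eqref{innadh1}.} By Plancherel's theorem, $\big((-\Delta)^{s/2}\widecheck{H}_{k,\ell}^{s,n},(-\Delta)^{s/2}\widecheck{H}_{j,\iota}^{s,m}\big)_{\mathbb R^d}=\big(\mathscr{F}[(-\Delta)^{s/2}\widecheck{H}_{k,\ell}^{s,n}],\mathscr{F}[(-\Delta)^{s/2}\widecheck{H}_{j,\iota}^{s,m}]\big)_{\mathbb R^d}$, which by \eqref{FourierFLH2} (with $s$ replaced by $s/2$, noting the parameter $\mu=s$) equals
\[
\int_{\mathbb R^d}|\bxi|^{s}\,\overline{(-\i)^{n+2k}\widehat{H}_{k,\ell}^{s,n}(\bxi)}\cdot|\bxi|^{s}(-\i)^{m+2j}\widehat{H}_{j,\iota}^{s,m}(\bxi)\,\d\bxi
=\pm\int_{\mathbb R^d}\widehat{H}_{k,\ell}^{s,n}(\bxi)\widehat{H}_{j,\iota}^{s,m}(\bxi)\,|\bxi|^{2s}\,\d\bxi,
\]
using that $\widehat{H}$ is real and $|(-\i)^{n+2k}|=1$. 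By the GHF orthogonality \eqref{HerfunOrthA0} with weight $|\bxi|^{2\mu}=|\bxi|^{2s}$, the integral is $\delta_{mn}\delta_{kj}\delta_{\ell\iota}$; in particular the nonzero case forces $n=m$, $k=j$, so the prefactor $(-\i)^{n+2k}\overline{(-\i)^{m+2j}}=|(-\i)^{n+2k}|^2=1$ and the sign is $+1$. This gives \eqref{innadh1}.

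The only genuinely nontrivial ingredient is the $\mu=0$ Fourier-eigenfunction property used in Step 1; if that has not been proved earlier it must be established here via \cite[Thm. 2.1]{2018ZhangLi} (reduction of the $d$-dimensional Fourier transform of a radial-times-spherical-harmonic function to a Hankel transform in the radial variable) together with the classical self-reciprocal formula $\int_0^\infty L_k^{(\alpha)}(r^2)\e^{-r^2/2}r^{\alpha+1/2}J_\alpha(\rho r)(\rho r)^{1/2}\,\d r=(-1)^k L_k^{(\alpha)}(\rho^2)\e^{-\rho^2/2}\rho^{\alpha+1/2}$ with $\alpha=n+(d-2)/2$. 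Everything after that is bookkeeping with the coefficients ${}^{\mu}_0\CC^k_j$ and powers of $-\i$, plus Plancherel and the already-proved weighted orthogonality of the GHFs. I expect no real obstacle beyond keeping the sign/phase factors $(-1)^{k-j}(-\i)^{n+2j}$ straight.
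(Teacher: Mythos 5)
Your proposal is correct and follows essentially the same route as the paper: establish the $\mu=0$ Fourier-eigenfunction property via the spherical-harmonic reduction to a Hankel transform plus the self-reciprocal Laguerre integral, propagate it through the expansion \eqref{adher} using the phase identity $(-1)^{k-j}(-\i)^{n+2j}=(-\i)^{n+2k}$, and finish with Parseval and the weighted orthogonality \eqref{HerfunOrthA0}. The only cosmetic difference is the reference used for the radial reduction of $\mathscr{F}$ (the paper invokes a Bessel-integral identity directly rather than the ball-PSWF paper), and your extra care with the conjugate phase in Step 3 is sound.
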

\begin{proof} 
We first show that  the GHFs with   $\mu=0$  
are eigenfunctions of the Fourier transform, namely,
\begin{equation}\label{FourierH}
 \mathscr{F}[\widehat{H}_{k,\ell}^{0,n}](\bxi)
 = (-\i)^{n+2k}\widehat{H}_{k,\ell}^{0,n}(\bxi).
\end{equation}
According to \cite[Lemma 9.10.2]{AAR1999},  we have that  for  $\omega>0$,
\begin{equation}\label{adlemma21}
\int_{\sph^{d-1}} Y_{\ell}^{n}(\hat{\bx}) \,\e^{-\i \omega\,  \langle \hat{\bxi},  \hat{\bx} \rangle}  \d \sigma(\hat{\bx})=\frac{(-\i)^n (2\pi)^{\frac{d}{2}}}{\omega^{\frac{d-2}2}} J_{n+\frac{d-2}{2}}(\omega)\, Y_{\ell}^{n}(\hat{\bxi}), \;\;\; \hat{\bxi}\in \sph^{d-1}, 
\end{equation}
where $J_{\nu}(z)$ is the  Bessel functions of the first kind of order $\nu.$ 
Then using  Definition  \ref{ddGHPF} with $\mu=0,$ and  \eqref{adlemma21}  with $\omega=\rho r$ and $\rho=|\bxi|$,   leads to 
\begin{equation}\begin{split}\label{sh1}
 & \mathscr{F}[\widehat{H}_{k,\ell}^{0,n}](\bxi)=\frac{1}{(2\pi)^{\frac{d}{2}}}\int_{\mathbb{R}^{d}} \widehat{H}_{k,\ell}^{0,n}(\bx)\e^{-\i \langle  \bxi, \bx\rangle} \mathrm{d} \bx
\\&\quad = \frac 1 {\sqrt{\gamma^{0, d}_{k,n}}}\frac{1}{(2\pi)^{\frac{d}{2}}}\int_{0}^{\infty}r^n L_k^{(n+\frac{d-2}{2})}(r^2)\e^{-\frac{r^2}2}  \bigg\{\int_{\mathrm{S}^{d-1}} Y_{\ell}^{n}  (\hat{\bx}) \e^{-\i \rho r \langle\hat{\bxi}, \hat{\bx}\rangle}     \d \sigma(\hat{\bx})\bigg\}  r^{d-1}\,  \d r
 \\&\quad =\frac 1 {\sqrt{\gamma^{0, d}_{k,n}}} \frac{(-\i)^n}{\rho^{\frac{d-2}2}} \bigg\{\int_{0}^{\infty}r^{n+\frac d 2} L_k^{(n+\frac{d-2}{2})}(r^2)\, \e^{-\frac{r^2}2}   J_{n+\frac{d-2}{2}}(\rho r)\,  \d r\bigg\} Y_{\ell}^{n}(\hat{\bxi}),\;\; \rho>0. 
\end{split}\end{equation}
Recall the integral identity of the generalised Laguerre polynomials (cf.  \cite[P. 820]{Gradshteyn2015Book}):  for $ \alpha>-1,$ 
\begin{equation}\label{tlag1}
 \int_0^{\infty}r^{\alpha+1}L_k^{(\alpha)}(r^2)\,\e^{-\frac{r^2}{2}}\,J_{\alpha}(\rho r) \,\d r= (-1)^k \rho^{\alpha}  L_k^{(\alpha)}(\rho^2)\,\e^{-\frac{\rho^2}{2}},\;\;\; \rho>0. 
\end{equation}
Thus,  taking $\alpha=n+\frac{d-2}{2}$ in \eqref{tlag1}, we can work out the integral in  \refe{sh1} and then obtain from \refe{LagFun01} with $\mu=0$  that 
\begin{align}\label{asdadd}
 \mathscr{F}[\widehat{H}_{k,\ell}^{0,n}](\bxi)
 =  \frac { (-1)^k} {\sqrt{\gamma^{0, d}_{k,n}}} \frac{ (-\i)^n}{\rho^{\frac{d-2}{2}}}
 \rho^{n+\frac{d-2}{2}}  L_k^{(n+\frac{d-2}{2})}(\rho^2)\e^{-\frac{\rho^2}{2}}\, Y_{\ell}^n(\hat{\bxi}) =(-\i)^{n+2k}\widehat{H}_{k,\ell}^{0,n}(\bxi).
\end{align}
This yields \eqref{FourierH}. 


From  Definition \ref{adjointGHFs} and the property  \eqref{asdadd}, we obtain
\begin{equation}\begin{split}\label{provefour1}
 \mathscr{F}[\widecheck{H}_{k,\ell}^{\mu,n}](\bxi)  
 &=\sum_{j=0}^k (-1)^{k-j}\, {}^{\mu}_0\CC^k_{j} \, \mathscr{F}[\widehat{H}_{j,\ell}^{0,n}](\bxi) 
 = \sum_{j=0}^k (-1)^{k-j}\, (-\i)^{n+2j}\,  {}^{\mu}_0\CC^k_{j}\,  \widehat{H}_{j,\ell}^{0,n}(\bxi)
 \\&=(-\i)^{n+2k}\sum_{j=0}^k  {}^{\mu}_0\CC^k_{j} \, \widehat{H}_{j,\ell}^{0,n}(\bxi)=(-\i)^{n+2k} \widehat{H}_{k,\ell}^{\mu,n}(\bxi),
\end{split}\end{equation}
where  in the last step, we used  \eqref{adherGHFs}. This gives the first identity in  \eqref{FourierFLH}, and the second is its immediate consequence.  The property \refe{FourierFLH2} follows  directly from  \eqref{FourierFLH} and the definition of fractional Laplacian \eqref{eq:LapDF}. 

Finally,  using the Parseval's identity and \refe{FourierFLH2},    we  derive from the orthogonality  \eqref{HerfunOrthA0}  that
 \begin{align*}
&\big((-\Delta)^{\frac{s}{2}}\widecheck{H}_{k, \ell}^{s, n}, (-\Delta)^{\frac{s}{2}}\widecheck{H}_{j, \iota}^{s, m}\big)_{\mathbb R^d}=\big( \mathscr{F}[(-\Delta)^{\frac s2}  \widecheck{H}_{k,\ell}^{s,n}], \mathscr{F}[ (-\Delta)^{\frac s2}   \widecheck{H}_{j,\iota}^{s,m}] \big)_{\mathbb R^d}
 \\&\quad = (-\i)^{n-m+2k-2j}( |\bxi|^{2s} \widehat{H}_{k,\ell}^{s,n}, \widehat{H}_{j,\iota}^{s,m}  )_{\mathbb R^d}=\delta_{mn}\delta_{kj}\delta_{\ell\iota}.
 \end{align*}
This yields  \eqref{innadh1} and  ends the proof.
\end{proof}

Note the GHFs are real-valued, so we infer from \eqref{FourierDef} readily that  
$$
 \mathscr{F}[\widehat{H}_{k,\ell}^{\mu,n}(\bs x)](\bxi)= \big\{\mathscr{F}^{-1}[\widehat{H}_{k,\ell}^{\mu,n}(\bs x)](\bs \xi)\big\}^*.
$$
Thus, we find from \eqref{FourierFLH} immediately the following ``reversed'' form of \eqref{FourierFLH}. 
\begin{cor}\label{newGHfsAB}
For $\mu>-\frac 1 2, (\ell,n)\in \Upsilon_\infty^d$ and $k\in \mathbb N_0,$  we have 
\begin{equation}\label{FourierFLH00}
 \mathscr{F}[\widehat{H}_{k,\ell}^{\mu,n}](\bxi)=\i^{n+2k}\widecheck{H}_{k,\ell}^{\mu,n}(\bxi), \quad 
 \mathscr{F}^{-1}[\widecheck{H}_{k,\ell}^{\mu,n}](\bx)=(-\i)^{n+2k}\widehat{H}_{k,\ell}^{\mu,n}(\bx). 
\end{equation} 
\end{cor}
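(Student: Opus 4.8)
\emph{Proof proposal.} This corollary is the ``$\mathscr{F}\leftrightarrow\mathscr{F}^{-1}$'' mirror of Theorem~\ref{newGHfsA}, and the plan is to obtain it by complex conjugation, exactly as flagged in the sentence preceding the statement. First I would record that every function involved is real-valued: $\widehat{H}_{k,\ell}^{\mu,n}$ is real by Definition~\ref{ddGHPF}, being the product of the real radial factor $r^{n}L_{k}^{(n+\frac{d-2}{2}+\mu)}(r^{2})\e^{-r^{2}/2}$ and a real spherical harmonic $Y_{\ell}^{n}$, while $\widecheck{H}_{k,\ell}^{\mu,n}$ is real by Definition~\ref{adjointGHFs}, since the connection coefficients ${}^{\mu}_0\CC^{k}_{j}$ of \eqref{coefrd2} are real for $\mu>-1/2$ and the $\widehat{H}_{j,\ell}^{0,n}$ are real. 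Then, for any real $u\in\mathscr{S}(\mathbb{R}^{d})$, conjugating the defining integral in \eqref{FourierDef} and using $\overline{\e^{-\i\langle\bxi,\bx\rangle}}=\e^{\i\langle\bxi,\bx\rangle}$ yields the elementary identities $\mathscr{F}[u]=\overline{\mathscr{F}^{-1}[u]}$ and $\mathscr{F}^{-1}[u]=\overline{\mathscr{F}[u]}$.

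The next step is to apply these to the two formulas in \eqref{FourierFLH}: conjugating $\mathscr{F}^{-1}[\widehat{H}_{k,\ell}^{\mu,n}]=\i^{n+2k}\widecheck{H}_{k,\ell}^{\mu,n}$ produces the claimed expression for $\mathscr{F}[\widehat{H}_{k,\ell}^{\mu,n}]$, and conjugating $\mathscr{F}[\widecheck{H}_{k,\ell}^{\mu,n}]=(-\i)^{n+2k}\widehat{H}_{k,\ell}^{\mu,n}$ produces that for $\mathscr{F}^{-1}[\widecheck{H}_{k,\ell}^{\mu,n}]$, once the powers of $\pm\i$ are simplified (the factor $\i^{2k}=(-1)^{k}$ is real, so conjugation only acts on the $\i^{n}$ part). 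A second, self-contained route that bypasses the conjugation observation is to re-run the argument of Theorem~\ref{newGHfsA} with $\mathscr{F}$ in place of $\mathscr{F}^{-1}$: expand $\widehat{H}_{k,\ell}^{\mu,n}=\sum_{j=0}^{k}{}^{\mu}_0\CC^{k}_{j}\,\widehat{H}_{j,\ell}^{0,n}$ via \eqref{adherGHFs}, apply the Fourier eigenrelation \eqref{FourierH} termwise, and resum using Definition~\ref{adjointGHFs}; the sign factor $(-1)^{k-j}$ built into the A-GHF is precisely what reassembles $\sum_{j}{}^{\mu}_0\CC^{k}_{j}(-\i)^{n+2j}\widehat{H}_{j,\ell}^{0,n}$ into a single power of $\pm\i$ times $\widecheck{H}_{k,\ell}^{\mu,n}$, and one further application of $\mathscr{F}$ (or of $\mathscr{F}^{-1}$) gives the companion formula.

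There is essentially no obstacle here: the proof is two lines once real-valuedness of the two families is noted, and the only point requiring care is the bookkeeping of the powers of $\pm\i$ — in particular verifying the consistency $\mathscr{F}\mathscr{F}^{-1}[\widehat{H}_{k,\ell}^{\mu,n}]=\widehat{H}_{k,\ell}^{\mu,n}$, i.e. that the two constants appearing in \eqref{FourierFLH00} are reciprocals of one another (and of those in \eqref{FourierFLH}).
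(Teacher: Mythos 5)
Your first route is exactly the paper's proof: the paper notes that the GHFs are real-valued, records $\mathscr{F}[\widehat{H}_{k,\ell}^{\mu,n}]=\{\mathscr{F}^{-1}[\widehat{H}_{k,\ell}^{\mu,n}]\}^{*}$, and reads the corollary off from \eqref{FourierFLH}. The gap in your write-up is that you assert the conjugation ``produces the claimed expression'' without doing the $\pm\i$ arithmetic — and it does not. Conjugating the second identity of \eqref{FourierFLH}, with $\widecheck{H}_{k,\ell}^{\mu,n}$ real, gives
\[
\mathscr{F}[\widehat{H}_{k,\ell}^{\mu,n}]=\overline{\i^{\,n+2k}}\;\widecheck{H}_{k,\ell}^{\mu,n}=(-\i)^{n+2k}\,\widecheck{H}_{k,\ell}^{\mu,n},
\]
which differs from the stated $\i^{n+2k}\widecheck{H}_{k,\ell}^{\mu,n}$ by the factor $(-1)^{n}$. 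Your own parenthesis (``conjugation only acts on the $\i^{n}$ part'') is precisely where the mismatch sits: conjugation sends $\i^{n}$ to $(-\i)^{n}$, it does not return $\i^{n}$. Your second, termwise route lands in the same place: $\sum_{j}{}^{\mu}_{0}\CC^{k}_{j}(-\i)^{n+2j}\widehat{H}^{0,n}_{j,\ell}=(-\i)^{n}(-1)^{k}\widecheck{H}^{\mu,n}_{k,\ell}=(-\i)^{n+2k}\widecheck{H}^{\mu,n}_{k,\ell}$. Also, the consistency check you propose is too weak to catch this: $\i^{n+2k}$ and $(-\i)^{n+2k}$ are already reciprocal, so $\mathscr{F}\mathscr{F}^{-1}=\mathrm{id}$ holds for the printed constants as well. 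The binding constraint coming from real-valuedness is different, namely that the constant in $\mathscr{F}[\widehat{H}]=c\,\widecheck{H}$ must be the \emph{complex conjugate} of the one in $\mathscr{F}^{-1}[\widehat{H}]=c'\,\widecheck{H}$.

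To be fair, the discrepancy is not of your making: the corollary as printed (and the paper's own one-line derivation, and \eqref{uHs0001} in the introduction) carries the same sign slip for odd $n$. A direct test makes this unambiguous: for $d=1$, $\mu=0$, $n=1$, $k=0$ one has $\widehat{H}^{0,1}_{0,1}(x)=\widecheck{H}^{0,1}_{0,1}(x)=c\,x\,\e^{-x^{2}/2}$ with $c>0$, and $\mathscr{F}[x\e^{-x^{2}/2}](\xi)=-\i\,\xi\,\e^{-\xi^{2}/2}$, so the constant is $(-\i)^{n+2k}=-\i$, not $\i^{n+2k}=\i$ (this is also what \eqref{FourierH} gives, since $\widecheck{H}^{0,n}_{k,\ell}=\widehat{H}^{0,n}_{k,\ell}$). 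A careful version of your argument therefore proves the corrected statement $\mathscr{F}[\widehat{H}_{k,\ell}^{\mu,n}]=(-\i)^{n+2k}\widecheck{H}_{k,\ell}^{\mu,n}$ and $\mathscr{F}^{-1}[\widecheck{H}_{k,\ell}^{\mu,n}]=\i^{n+2k}\widehat{H}_{k,\ell}^{\mu,n}$, i.e.\ \eqref{FourierFLH00} with the two constants interchanged; you should either state that version or flag the discrepancy explicitly rather than asserting that the conjugation reproduces the printed constants.
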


\begin{remark} {\em  The fractional Sobolev orthogonality  \eqref{innadh1} has profound implications even for the integral-order Laplacian $(-\Delta)^m$ with $m\in {\mathbb N}.$ For example,  we find from \eqref{adher} with $s=1$  that the A-GHFs read 
\begin{equation*}
 \widecheck{H}_{k,\ell}^{1,n}(\bx)=\sqrt{\frac{k!}{\Gamma(k+n+\frac{d}{2}+1)}} \sum_{j=0}^k\sqrt{\frac{\Gamma(j+n+\frac{d}{2})}{j!}}\widehat{H}_{j,\ell}^{0,n}(\bx),
\end{equation*} 
which are orthogonal with respect to $(\nabla\cdot,\nabla \cdot)_{\RR^d}.$
However, this attractive property is not valid for the usual Hermite-based methods based on tensorial  Hermite functions $\prod^d_{j=1}\widehat{H}_{n_j}(x_j)$. Thus, it is advantageous to use the A-GHFs for 
usual Laplacian and bi-harmonic Laplacian {\rm(}using the A-GHFs with $s=2${\rm)} in $\mathbb R^d$. \qed 
}
\end{remark}

\begin{remark} {\em In contrast to  \eqref{FourierH},   the eigen-functions of  the finite Fourier transform are the ball prolate spheroidal wave functions  introduced in \cite{2018ZhangLi},  which find   useful in approximating bandlimited functions. \qed }
\end{remark}


\subsection{Differences and connections with some existing generalisations}\label{1DGHPF}
There have  been some existing generalisations of the usual Hermite polynomials/functions in different senses, so we  feel compelled to outline the differences and connections between the GHPs/GHFs herein with the most relevant ones in literature.  

\subsubsection{GHPs/GHFs in Szeg\"{o} \cite{GS1939} } \label{subsect2.2}

  Note from  \eqref{cpCp} that for $d=1$, $a^1_0=a_1^1=1$ and $a_n^1=0$ for $n\ge 2,$ so there exist 
 only two orthonormal harmonic polynomials: $Y_{1}^{0}(x)=\frac{1}{\sqrt{2}} \text { and } Y_{1}^{1}(x)=\frac{x}{\sqrt{2}}$.
Thus,  the GHPs in Definition \ref{ddGHPF}  reduce to  
\begin{equation}\begin{split}\label{1drelat0}
&H_{2k}^{(\mu)}(x):=(-1)^k\,2^{2k+\frac12}\,k! \,H_{k,1}^{\mu,0}(x)=(-1)^k \,2^{2k}\,k!\,  L_k^{(\mu-\frac 1 2)}(x^2),\\
&H_{2k+1}^{(\mu)}(x):= (-1)^k \,2^{2k+\frac3 2}\,k! \,H_{k,1}^{\mu,1}(x)= (-1)^k \,2^{2k+1}\,k!\, x \,  L_k^{(\mu+\frac 1 2)}(x^2),
 \end{split}
 \end{equation}
 which are mutually orthogonal with respect to the weight function $|x|^{2\mu}\,\e^{-x^2}$ on $\mathbb R.$ 
 In fact, this family of GHPs was first  introduced by Szeg\"{o} in \cite[P. 371]{GS1939} as an exercised problem 
 and promoted by   Chihara in the PhD dissertation   \cite[entitled as ``Generalised Hermite Polynomials"]{chihara1955}(1955),  and  his book   \cite{chihara2011}(1978).  According to   Szeg\"{o} \cite[Prob.\! 25]{GS1939},  the GHPs with $\mu>-\frac 12$ satisfy the differential equation: 
 \begin{equation}\label{problem25}
\begin{split}
&x y^{\prime \prime}+2(\mu-x^2) y^{\prime}+(2 n x-\theta_n x^{-1}) y=0, \quad 
\theta_n=
\begin{cases}0, & {n \text { even}}\,,  \\ {2\mu,} & n \text { odd}\,; 
\end{cases}  \;\;\;\; y=H_{n}^{(\mu)}(x),
\end{split}
\end{equation} 
Some other properties  of $\{H_{n}^{(\mu)}\}$ can be founded in  Chihara \cite{chihara1955,chihara2011}. We also refer to some limited works on
 the analytic studies  or further generalisations \cite{Rosler1998,twoclasses}. With the normalisation in \eqref{1drelat0},
 the orthonormal  GHFs take the form
\begin{equation}\label{GHermfunc}
\widehat{H}_n^{(\mu)}(x):=\sqrt{1/\gamma^{(\mu)}_n}\;\e^{-\frac{x^2}{2}}H_n^{(\mu)}(x),\quad 
\gamma_n^{(\mu)}:=2^{2n}\,\Big[\frac n 2 \Big]!\, \Gamma\Big(\Big[\frac {n+1} 2 \Big]+\mu+\frac 1 2\Big),
\end{equation}
 In particular, for $\mu=0,$ they reduce to the usual Hermite polynomials/functions. For distinction,   we  denote them  by 
 $H_n(x)$ and $\widehat H_n(x),$ respectively. 
 
It is known that   $\{\widehat H_n\}$ are the eigenfunctions of the Fourier transform. However, this property cannot carry over to    
the GHFs with $\mu\not=0.$  In \cite[(2.34)]{twoclasses},  the Fourier transform of $ \widehat{H}_{n}^{(\mu)}(x)$ was expressed 
in terms of the Kummer hypergeometric function ${}_1F_1(\cdot).$  In contrast, 
the general result in Corollary \ref{newGHfsAB}  implies a more informative representation as follows
\begin{equation}\label{FourierFLH1dAsd}
\frac 1 {\sqrt{2\pi}}\int_{\mathbb R} \widehat H_n^{(\mu)}(x) {\rm e}^{-{\rm i} \xi x }\d x= 
\i^n\widecheck{H}_{n}^{(\mu)}(\xi),\quad \xi\in \mathbb R, 
\end{equation}
where the adjoint GHFs are given by 
\begin{equation}\begin{split}\label{adher1d1}
 \widecheck{H}^{(\mu)}_n(x)=\!\!\sum_{j=0\atop j+n\,{\rm even}}^{n}\!(-1)^{\frac{n-j}{2}}\, {}^{\mu}_0\C^n_j\,  \widehat{H}_j(x), \quad\;  x\in \mathbb R, 
 \end{split}\end{equation}
 and for even $j+n,$ the coefficients are 
 \begin{equation}\label{thenj}\begin{split}
{}^{\mu}_{0}\C^n_j=\frac{(-1)^{\frac{n-j}{2}}\sqrt{\Gamma([\frac{n}{2}]+1)\Gamma([\frac{j+1}{2}]+\frac12)}\,\, \Gamma\big(\frac{n-j}{2}+\mu\big)}    {\Gamma(\mu)\sqrt{\Gamma([\frac{n+1}{2}]+\mu+\frac12)\Gamma([\frac{j}{2}]+1)}\,\, \Gamma\big(\frac{n-j}{2}+1\big) }.
\end{split}\end{equation}
Note that the formulation of the adjoint GHFs in \eqref{adher1d1}-\eqref{thenj} needs some simple calculation from  \eqref{adher} and \eqref{1drelat0}.  

Indeed,  the study of one-dimensional GHPs/GHFs is of  much independent interest in developing methods using multi-dimensional tensorial basis functions, or possible sophisticated generalisation \eqref{orthsAH} discussed in Yurova \cite{yurova2020} with applications in quantum dynamics and plasma physics.
In what follows, we present some approximation results which can be extended to the tensorial case straightforwardly and which are new to the best of our knowledge. 

Define the weight functions $\chi^{(\mu)}=|x|^{2\mu} \e^{-x^2}$ and $\omega^{(\mu)}=|x|^{2\mu}.$  Consider  the $L^2_{\chi^{(\mu)}}$-orthogonal
projection $\Pi^{(\mu)}_{N}: L^2_{\chi^{(\mu)}}(\RR)\rightarrow \mathbb{P}_{N}={\rm span}\{1,x,\cdots, x^N\}$ defined by
 \begin{equation}\label{ll2orth}
\big(u-\Pi^{(\mu)}_{N} u, v\big)_{\chi^{(\mu)}}=0, \quad \forall v \in \mathbb{P}_{N}.
\end{equation}
For any $u\in L^2_{\omega^{(\mu)}}(\RR)$, we have $u\e^{\frac{x^2}{2}}\in L^2_{\chi^{(\mu)}}(\RR)$, and define
 \begin{equation}\label{proj1d}
\widehat{\Pi}^{(\mu)}_{N}u:=\e^{-\frac{x^2}{2}} \Pi^{(\mu)}_{N}(u\, \e^{\frac{x^2}{2}}) \in {\mathcal V}_{N}:=\big\{\phi=\e^{-\frac{x^2} 2} \psi : \psi\in {\mathbb P}_{N}\big\},
\end{equation}
which turns out to  be the $L^2_{\omega^{(\mu)}}$-orthogonal projection, as 
\begin{equation}
(u-\widehat{\Pi}^{(\mu)}_{N} u, v)_{\omega^{(\mu)}}=\big(u \e^{\frac{x^2}{2}}-\Pi^{(\mu)}_{N}(u\e^{\frac{x^2}{2}}), v\e^{\frac{x^2}{2}}\big)_{\chi^{(\mu)}}=0, \quad \forall v\in {\mathcal V}_{N}.
\end{equation}


Similar to the introduction of the Dirac's ladder operators in usual Hermite approximation (i.e., $\mu=0$, see \cite{Lubich2008book,ShenTao2011,yurova2020}), we define the new derivative operator
\begin{equation}
 \label{pseudoder} 
D_x u=\partial_x u_{\rm e} + \partial_x \Big(\frac{u_{\rm o}}{2x}\Big),\;\;\; u_{\rm e}(x)=\frac{u(x)+u(-x)}{2},\;\; 
u_{\rm o}(x)=\frac{u(x)-u(-x)}{2}.
 \end{equation}
Note that if $u$ is an odd (resp.  even) function, then $D_x u=\partial_x (u/(2x))$ (resp.  $D_x u=\partial_x u).$ Clearly, $D_x u$ is an odd function.  Then the modified higher order derivative of  general $u$  takes  the form  
\begin{equation}\label{higherOrder}
D_x^2 u=D_x \{ D_x u\}=\partial_x\Big \{\frac {D_x u} {2x}\Big\},\quad  
D_x^3 u=\partial_x\Big\{\frac 1 {2x} \partial_x\Big \{\frac {D_x u} {2x}\Big\}\Big\},
\end{equation}
and likewise, we can define $D_x^l u$ for $l\ge 4.$ Accordingly, to characterise the space of functions to be approximated, 
we introduce the vector space ${\mathcal B}_{\mu}^{m}(\RR),$ $m\in {\mathbb N}$,
equipped with the norm and semi-norm
\begin{equation*}\begin{split}
&\|u\|_{{\mathcal B}_{\mu}^{m}(\RR)}=\Big(\|u\|_{\chi^{(\mu)}}^{2}+\sum_{l=1}^{m}(\|D_x^{l} u_{\rm e}\|_{\chi^{(\mu+l-1)}}^{2}+\|D_x^l u_{\rm o}\|_{\chi^{(\mu+l)}}^{2})\Big)^{\frac12}, 
\\&|u|_{{\mathcal B}_{\mu}^{m}(\RR)}=\Big(\|D_x^mu_{\rm e}\|^2_{\chi^{(\mu+m-1)}}+\|D_x^mu_{\rm o}\|^2_{\chi^{(\mu+m)}}\Big)^{\frac12},\quad m\geq 1.
\end{split}\end{equation*}
For $m=0$, we define  ${\mathcal B}_{\mu}^{0}(\RR)=L^2_{\chi^{(\mu)}}(\RR).$
The main approximation results are stated below, whose proof will be given  in Appendix \ref{appb}.
 \begin{thm}
 \label{lemghp} 
 For any $u \in {\mathcal B}^m_\mu(\RR)$ with $\mu>-\frac12, \mu\not=0$ and integer $0\le m\le  [\frac{N+1} 2],$ we have 
 \begin{equation}\label{resultghp}
\big\|\Pi^{(\mu)}_{N}u-u\big\|_{\chi^{(\mu)}}\leq    \Big(\Big[\frac {N+1} 2\Big]-m+1\Big)_m^{-\frac 12}\, |u|_{{\mathcal B}_{\mu}^{m}(\RR)},
\end{equation}
On the other hand, if $u e^{\frac {x^2} 2} \in {{\mathcal B}}_{\mu}^{m}(\RR),$ with $\mu>-\frac12, \mu\not=0$ and with integer  $0\leq m\leq [\frac{N+1}2],$ then
\begin{equation}\label{errest0}
\big\| \widehat{\Pi}^{(\mu)}_{N} u-u\big\|_{\omega^{(\mu)}} \leq  \Big(\Big[\frac {N+1} 2\Big]-m+1\Big)_m^{-\frac 12}\, \big|u e^{\frac {x^2} 2} \big|_{{\mathcal B}_{\mu}^{m}(\RR)}. 
\end{equation}
Here, $(\alpha)_m=\alpha(\alpha+1)\cdots(\alpha+m-1)$ the rising factorial in the Pochhammer symbol.
 \end{thm}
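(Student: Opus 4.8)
The plan is to derive \eqref{errest0} from \eqref{resultghp} by a scaling of the test function, and to prove \eqref{resultghp} by combining a Parseval identity in the GHP basis with explicit ladder relations for the modified derivative $D_x$.

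For the reduction, note from \eqref{proj1d} that $\widehat{\Pi}^{(\mu)}_N u=\e^{-x^2/2}\Pi^{(\mu)}_N(u\,\e^{x^2/2})$, so with $v:=u\,\e^{x^2/2}$ one has $\widehat{\Pi}^{(\mu)}_N u-u=\e^{-x^2/2}\big(\Pi^{(\mu)}_N v-v\big)$; since $\omega^{(\mu)}=|x|^{2\mu}=\e^{x^2}\chi^{(\mu)}$, this gives $\|\widehat{\Pi}^{(\mu)}_N u-u\|_{\omega^{(\mu)}}=\|\Pi^{(\mu)}_N v-v\|_{\chi^{(\mu)}}$, whence \eqref{errest0} follows from \eqref{resultghp} applied to $v\in{\mathcal B}_\mu^m(\RR)$. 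It therefore suffices to prove \eqref{resultghp}, and we may assume $m\ge1$ (for $m=0$ the bound reduces to the trivial contraction estimate $\|\Pi^{(\mu)}_N u-u\|_{\chi^{(\mu)}}\le\|u\|_{\chi^{(\mu)}}$).

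For \eqref{resultghp}, I would expand $u\in{\mathcal B}_\mu^m(\RR)\subset L^2_{\chi^{(\mu)}}(\RR)$ in the GHPs and split it into its even and odd parts, $u=u_{\rm e}+u_{\rm o}$ with $u_{\rm e}=\sum_{k\ge0}a_kH^{(\mu)}_{2k}$, $u_{\rm o}=\sum_{k\ge0}b_kH^{(\mu)}_{2k+1}$. By the orthogonality of the GHPs (which follows from \eqref{OMH} and \eqref{1drelat0}, together with $\|H^{(\mu)}_n\|^2_{\chi^{(\mu)}}=\gamma^{(\mu)}_n$ as in \eqref{GHermfunc}), the truncation error is the tail $\|\Pi^{(\mu)}_N u-u\|^2_{\chi^{(\mu)}}=\sum_{2k>N}a_k^2\gamma^{(\mu)}_{2k}+\sum_{2k+1>N}b_k^2\gamma^{(\mu)}_{2k+1}$, and a short parity check shows that every surviving index in either sum satisfies $k\ge[\tfrac{N+1}2]$. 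The crux is the action of $D_x$ on the GHPs: from the Laguerre representations \eqref{1drelat0} and the standard identity $\tfrac{\d}{\d z}L^{(\alpha)}_k(z)=-L^{(\alpha+1)}_{k-1}(z)$, a direct calculation gives $D_xH^{(\mu)}_{2k}=4k\,H^{(\mu)}_{2k-1}$ and $D_xH^{(\mu)}_{2k+1}=4k\,H^{(\mu+1)}_{2k-1}$; since $D_x$ always returns an odd function, iterating these two relations yields, for $1\le m\le k$,
\[
D_x^mH^{(\mu)}_{2k}=4^m\frac{k!}{(k-m)!}H^{(\mu+m-1)}_{2(k-m)+1},\qquad D_x^mH^{(\mu)}_{2k+1}=4^m\frac{k!}{(k-m)!}H^{(\mu+m)}_{2(k-m)+1}.
\]
This is precisely why the weights $\chi^{(\mu+m-1)}$ (even part) and $\chi^{(\mu+m)}$ (odd part) occur in ${\mathcal B}_\mu^m(\RR)$. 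Applying $D_x^m$ term by term (legitimate by a standard density argument, each image being a single basis element) and simplifying the $\gamma$-factors, one finds
\[
\|D_x^mu_{\rm e}\|^2_{\chi^{(\mu+m-1)}}=\sum_{k\ge m}4\,(k-m+1)_m\,a_k^2\,\gamma^{(\mu)}_{2k},\qquad \|D_x^mu_{\rm o}\|^2_{\chi^{(\mu+m)}}=\sum_{k\ge m}(k-m+1)_m\,b_k^2\,\gamma^{(\mu)}_{2k+1}.
\]
Since $(k-m+1)_m$ is increasing in $k$ and $k\ge[\tfrac{N+1}2]\ge m$ on both tails, a term-by-term comparison gives $\|\Pi^{(\mu)}_N u-u\|^2_{\chi^{(\mu)}}\le\big([\tfrac{N+1}2]-m+1\big)_m^{-1}\big(\tfrac14\|D_x^mu_{\rm e}\|^2_{\chi^{(\mu+m-1)}}+\|D_x^mu_{\rm o}\|^2_{\chi^{(\mu+m)}}\big)\le\big([\tfrac{N+1}2]-m+1\big)_m^{-1}|u|^2_{{\mathcal B}_\mu^m(\RR)}$, and \eqref{resultghp} follows upon taking square roots.

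The main obstacle is the bookkeeping: pinning down the exact constants in the ladder relations and in the normalisations $\gamma^{(\mu)}_n$ (including the harmless extra factor $\tfrac14$ in the even part), and checking that the parity split of the truncation index forces $k\ge[\tfrac{N+1}2]$ — and not merely $k\ge[\tfrac N2]$ — in both tails, which is what produces the sharp index $[\tfrac{N+1}2]$ in the estimate. It is also worth recording that $\mu>-\tfrac12$ guarantees $\mu+m-1>-1$, so that $\|D_x^mu_{\rm e}\|_{\chi^{(\mu+m-1)}}$ is a genuine weighted $L^2$ norm, while the extra hypothesis $\mu\ne0$ only keeps the argument in the genuinely ``generalised'' regime (for $\mu=0$ one has $D_x=\partial_x$ on the even part and the classical Hermite estimates apply). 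The term-by-term action of $D_x^m$ on the infinite expansions is justified by approximating $u$ by its truncations, which are Cauchy in ${\mathcal B}_\mu^m(\RR)$ by the norm identities above.
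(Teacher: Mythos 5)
Your proposal is correct and follows essentially the same route as the paper's own proof in Appendix A: the even/odd splitting, the ladder identities $D_x^m H^{(\mu)}_{2k}=4^m\frac{k!}{(k-m)!}H^{(\mu+m-1)}_{2k-2m+1}$ and $D_x^m H^{(\mu)}_{2k+1}=4^m\frac{k!}{(k-m)!}H^{(\mu+m)}_{2k-2m+1}$, the Parseval comparison of tail coefficients (your ratio $h^{(\mu)}_{2k,m}/\gamma^{(\mu)}_{2k}=4(k-m+1)_m$ and $h^{(\mu)}_{2k+1,m}/\gamma^{(\mu)}_{2k+1}=(k-m+1)_m$ matches the paper's computation), and the reduction of \eqref{errest0} to \eqref{resultghp} via $v=u\,\e^{x^2/2}$. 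The only cosmetic difference is that you derive the ladder relations from the Laguerre derivative identity rather than from Chihara's formula \eqref{deri_GHP} and the recurrence \eqref{H2ksk}, which is immaterial.
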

 \begin{remark} {\em The above approximation result is  extendable to  the $d$-dimensional tensorial Hermite polynomials:
 $H_{\ell}^{(\mu)}(\bs x)=H_{\ell_1}^{(\mu_1)}(x_1)\cdots H_{\ell_d}^{\mu_d}(x_d),$ so is the tensorial Hermite functions. It is likely to explore   the generalisation recently considered in \cite{yurova2020}.} \qed  
 \end{remark}

\subsubsection{2D GHFs versus  generalised Hermite bases  for  Bose-Einstein condensates in \cite{BLS2009}} \label{subsect2.5}
For $d=2$, the dimensionality of the space $\mathcal{H}_n^2$ in \eqref{cpCp} is  $a^2_n=2-\delta_{n0},$ with the orthogonal basis   given by the real and imaginary parts of $(x_1+\i x_2)^n$. In polar coordinates, we have
\begin{equation}\label{2Dbasis}
Y_{1}^{0}(\bs{x})=\frac{1}{\sqrt{2 \pi}}, \quad Y_{1}^{n}(\bs{x})=\frac{r^{n}}{\sqrt{\pi}} \cos (n \theta), \quad Y_{2}^{n}(\bs{x})=\frac{r^{n}}{\sqrt{\pi}} \sin (n \theta), \quad n \geq 1.
\end{equation}
Then by \eqref{LagFun01}, the GHFs  can be expressed as
\begin{equation}\label{1drelat01}
\begin{split}
&\widehat H_{k,1}^{\mu,0}(\bx)= \frac 1 {\sqrt{2\pi\gamma^{\mu, 2}_{k,0}}}\,\e^{-\frac{r^2}{2}} L_k^{(\mu)}(r^2), \quad
\widehat H_{k,1}^{\mu,n}(\bs x)= \frac {1} {\sqrt{\pi\gamma^{\mu, 2}_{k,n}}}\,r^n\, \e^{-\frac{r^2}{2}} \,L_k^{(n+\mu)}(r^2)\cos (n\theta),\;
\\& \widehat H_{k,2}^{\mu,n}(\bs x)= \frac {1} {\sqrt{\pi\gamma^{\mu, 2}_{k,n}}}\,r^n\, \e^{-\frac{r^2}{2}}\,L_k^{(n+\mu)}(r^2)\sin (n\theta), \quad n\ge 1,\;\; k\ge 0.
 \end{split}
 \end{equation}
 
Note that similar constructions for the 2D GHFs with $\mu=0$ have been explored in  the computation of  the ground states and dynamics of Bose-Einstein condensation (cf.\! \cite{BLS2009}),  
governed by the Gross-Pitaevskii equation with an angular momentum rotation term:
\begin{equation}\label{GPErd}
\begin{aligned}
&{\rm i} \partial_{t} \psi(\bs x, t)=\Big(\!-\frac12\Delta+\frac{\gamma^{2}}{2}  |\bs x|^{2}+\Omega L_z+\beta|\psi(\bs x,t)|^2\Big) \psi(\bs x, t),\;\;\bs x\in \RR^2,\; t>0,\\&
\psi(\bs x,0)=\psi_0(\bs x),\;\;\bs x\in \RR^2; \quad \psi(\bs x,t)\to 0\;\; {\rm as}\;\;  |\bs x|\to\infty, \;\;\; t\ge0, 
\end{aligned}
\end{equation}
where  the constants $\gamma,\beta>0$, $\Omega$ is the dimensionless angular momentum rotation speed and $L_z=-{\rm i}(x\partial_y-y\partial_x)=-{\rm i}\partial_\theta$ in polar coordinates. 
The efficient  spectral algorithm therein  was built upon the  constructive basis  $\{r^n\, \e^{-\frac{\gamma r^2}{2}}\,L_k^{(n)}(\gamma r^2){\rm e}^{{\rm i}m\theta}\}$ that could diagonalise the Schr\"{o}dinger  operator:  $-\Delta+\gamma|\bs x|^2.$
Similar idea was extended to \eqref{GPErd} in $\mathbb R^3$ in cylindrical coordinates by using the tensor product of 
the 2D basis and the usual Hermite function in the $z$-direction in \cite{BLS2009}.

As shown in Theorem \ref{rddiffrecur2}, the GHFs with $\mu=0$ are eigenfunctions of the operator:  $-\Delta+|\bs x|^2,$ so with a proper scaling, the spectral algorithm leads to a diagonal matrix for the operator: $-\Delta+\gamma^2 |\bs x|^2.$
As we shall show in the late part,  our GHFs with $\mu\not=0$  offer a new  and efficient tool for the solutions of PDEs involving a  more general Schr\"{o}dinger operator: $(-\Delta)^s + |\bs x|^{2\mu}$ with $s\in (0,1]$ and $\mu>-1/2.$ 

 \subsubsection{3D GHPs versus Burnett polynomials \cite{burnett1936}} \label{subsect2.6}
 For $d=3$, the dimensionality of $\mathcal{H}_n^3$ in \eqref{cpCp}  is $a^3_n=2n+1$.  The   orthonormal basis in the spherical coordinates $\bx=(r\sin\theta\cos\phi,r\sin\theta\sin\phi,$ $r\cos\theta)^t$ takes the form 
\begin{equation}\label{1drelat020}
\begin{split}
&Y_{1}^{n}(\bs{x})=\dfrac{1}{\sqrt{8 \pi}} P_{n}^{(0,0)}(\cos \theta);  \;\;
 Y_{2 l}^{n}(\bs{x})=\dfrac{r^{n}}{2^{l+1} \sqrt{\pi}}(\sin \theta)^{l} P_{n-l}^{(l, l)}(\cos \theta) \cos (l \phi), 
\\& Y_{2 l+1}^{n}(\bs{x})=\dfrac{r^{n}}{2^{l+1} \sqrt{\pi}}(\sin \theta)^{l} P_{n-l}^{(l, l)}(\cos \theta) \sin(l \phi), \;\;\; 1\le l\le n, 
\end{split}
 \end{equation}
 where $\{P_k^{(l,l)}\}$ are the Gegenbauer polynomials.  Then the 3D GHPs/GHFs in Definition \ref{ddGHPF} read more explicit. 
 In fact, for $\mu=0,$ the GHPs with a scaling turn out to be the Burnett polynomials, which were first proposed  by Burnett \cite{burnett1936} as follows   
\begin{equation}\label{d3Burnett}
B_{k,\ell}^n(\bx)=c_{k}^n \; r^n L_{k}^{(n+\frac12)}\Big(\frac{r^2}2\Big)Y_{\ell}^{n}(\hat{\bx}), \quad  k\in \mathbb{N}_0 , \;  (\ell, n)\in \Upsilon_\infty^3,
\end{equation}
where $c_{k}^n$ is the normalisation  constant so that they are orthogonal in the sense
\begin{equation}\label{edcaseA}
\int_{\mathbb{R}^3} B_{k,\ell}^n(\bx)\, B_{j,\iota}^m(\bx) \, \e^{- \frac {|\bx|^2} 2}\, \d \bx=\delta_{kj}\delta_{mn}\delta_{\ell\iota}.
\end{equation}
As a result,   the Burnett polynomials are mutually orthogonal with respect to the Maxwellian  
 $\mathcal{M}(\bx)=\frac{1}{(2 \pi)^{3/2}} \e^{-\frac{|\bx|^{2}}{2}}.$  It is evident that by \eqref{MHP} and \eqref{OMH} (with $d=3$ and $\mu=0$), 
 \begin{equation}\label{BurrentRelation}
H_{k,\ell}^{0,n}(\bx) = \tilde c_k^n \,B_{k,\ell}^n(\sqrt 2\bx),\quad \bs x\in \mathbb R^3.
 \end{equation}
We remark that the Burnett polynomials are frequently used  as basis functions in solving kinetic equations (cf.\! \cite{Cai2018,Hu2020} and the references therein). 
 
 \section{GHF approximation of the IFL and the Schr\"{o}dinger equation}\label{secghg}\setcounter{equation}{0}  \setcounter{thm}{0} \setcounter{lem}{0} \setcounter{remark}{0}

In this section, we implement and analyse the GHF-spectral-Galerkin method for PDEs involving integral fractional Laplacian. 


\subsection{GHF-spectral-Galerkin method for a fractional model problem}  As an illustrative example, we   consider 
\begin{equation}\label{soureceprob}
(-\Delta)^s u(\bx)+\gamma u(\bx)=f(\bx) \;\;\;  {\rm in}\;\;  \mathbb{R}^d; \quad 
 u(\bx)\to 0 \;\;\;  {\rm as}\;\;  |\bx|\to \infty,
\end{equation}
where $s\in (0,1), \gamma>0$, $f\in H^{-s}(\mathbb{R}^d),$   and the fractional Laplacian operator is defined in \eqref{eq:LapDF}-\eqref{fracLap-defn}.   Here, the fractional Sobolev space $H^{s}(\mathbb{R}^d)$ with real $s$ is defined as in \cite{Nezza2012BSM}.

A weak formulation of  \eqref{soureceprob} is to find $u\in H^s(\RR^d)$ such that
\begin{equation}\label{uvsh}
\begin{split}
{\mathcal A}_s (u,v) =\big((-\Delta)^{\frac{s}{2}}u,(-\Delta)^{\frac{s}{2}}v\big)_{\mathbb R^d}+ \gamma(u,v)_{\mathbb{R}^d}=(f,v)_{\mathbb{R}^d},\quad \forall v\in  H^s(\mathbb R^d).
\end{split}
\end{equation}
From \eqref{eq:LapDF}, we find readily the continuity  and coercivity of the bilinear form ${\mathcal A}_s(\cdot,\cdot)$. 
Then we conclude from the standard Lax-Milgram lemma  that  the problem \eqref{uvsh} admits a unique solution satisfying
$\|u\|_{H^{s}(\mathbb{R}^d)}\le c\|f\|_{H^{-s}(\mathbb{R}^d)}.$

We choose  the finite dimensional approximation space spanned by the $d$-dimensional GHFs in Definition \ref{ddGHPF} or equivalently by the A-GHFs in Definition \ref{adjointGHFs}.  However, in view of 
\eqref{innadh1},  it is advantageous to use the latter as the basis functions, so we define 
\begin{equation}\label{spacefinite0}
{\mathcal V}_{\!N}^d := \text{span}\big\{\widecheck{H}_{k,\ell}^{s,n}(\bx): \; 0\le n\le N, \; 1\le \ell\le a_n^d,~
0\le 2 k\le N-n,\; k, \ell, n\in {\mathbb N}_0\big\}. 
\end{equation}
Then, the spectral-Galerkin approximation to \eqref{uvsh} is to find $u_N \in {\mathcal V}_{\!N}^d$ such that
\begin{align}\label{specf}
 \mathcal{A}_s(u_{N},v_{N})= (f,v_{N})_{\mathbb R^d},  \quad \forall v_N\in {\mathcal V}_{\!N}^d.
\end{align}
As with the continuous problem  \eqref{uvsh}, it has a unique solution  $u_{N}\in {\mathcal V}_{\!N}^d.$

In the real implementation,  we  write 
\begin{align}\label{uexpansion}
u_{N}(\bx)= \sum_{n=0}^N \sum_{\ell=1}^{a_n^d}\sum_{k=0}^{[\frac{N-n}{2}]} \tilde  u_{k,\ell}^{n}\widecheck{H}_{k,\ell}^{s,n}( \bx),
\end{align}
and  arrange the unknown coefficients in the order 
\begin{equation}\label{ufform}
\begin{split}
&\bs{u}=\big(\tilde{\bs u}_1^0,\tilde{\bs u}_2^0,\cdots,\tilde{\bs u}_{a_0^d}^0,\tilde{\bs u}_1^1,\tilde{\bs u}_2^1,\cdots,\tilde{\bs u}_{a_1^d}^1, \cdots,\tilde{\bs u}_1^N,\tilde{\bs u}_2^N,\cdots,\tilde{\bs u}_{a_N^d}^N\big)^{t},\\
&  \tilde{\bs u}^n_{\ell} = \big(\tilde{u}^n_{0,\ell}, \tilde{u}^n_{1,\ell},\cdots, \tilde{u}^n_{[\frac{N-n}{2}],\ell}\big)^t,
\end{split}
\end{equation}
and likewise for $\bs f,$ but with the components $\tilde  f^n_{k,\ell} = (f, \widecheck{H}^{s,n}_{k,\ell})_{\mathbb R^d}.$  
The orthogonality \eqref{innadh1} implies that the stiffness matrix is an identity matrix. Moreover, in view of  the orthogonality of the spherical harmonic basis (cf.  \eqref{Yorth}), the corresponding mass matrix is block diagonal as follows 
\begin{equation}\label{Mmatrix}
\begin{split}
&\bs M= \mathrm{diag}\big\{\bs M^0_{1}, \bs M^0_{2},  \cdots,{\bs M}_{a_0^d}^0, {\bs M}_1^1,{\bs M}_2^1,\cdots,{\bs M}_{a_1^d}^1, \cdots, {\bs M}_1^N, {\bs M}_2^N,\cdots,{\bs M}_{a_N^d}^N \big\}, 
\end{split}
\end{equation}
where the entries of each diagonal block  can be computed by 
\begin{equation}\label{Mnell}
\begin{split}
(\bs M^n_{\ell})_{kj} &= \big(\widecheck{H}_{k, \ell}^{s, n},  \widecheck{H}_{j, \ell}^{s, n}\big)_{\mathbb R^d}=\sum_{p=0}^k(-1)^{k-p}\;{}^s_0\CC^k_p \sum_{q=0}^j (-1)^{j-q}\;{}^s_0\CC^j_q\, \big(\widehat{H}_{p,\ell}^{0,n},\widehat{H}_{q,\ell}^{0,n}\big)_{\mathbb R^d} 
\\ &= (-1)^{k+j}\!\!  \sum_{p=0}^{\min \{j, k\}}\!\!{}^{s}_0\CC^k_p\; {}^s_0\CC^j_p\,.
\end{split}
 \end{equation}
Thus the linear system of \eqref{specf} can be written as
\begin{align}\label{eq:AlgS}
 (\bs I+\gamma\bs M)\bs{u} = \bs{f}.
\end{align}
\begin{remark}{\em
With the new basis at our disposal, the above method  has remarkable advantages over 
the existing Hermite approaches  {\rm(}cf. \cite{mao2017,2018tang}{\rm).} 
Although the usual one-dimensional  Hermite functions are  eigenfunctions of the Fourier transform, 
we observe from  \eqref{eq:LapDF} that the  factor $|\bxi|^{2s}$ is non-separable and singular, 
so the use of tensorial Hermite functions leads to a dense stiffness matrix whose entries are difficult 
to evaluate due to the involved singularity for $d\ge 2$.   \qed
}\end{remark}

 \subsubsection{Error analysis}
 Applying the first Strang lemma \cite{SF1973}  for the standard Galerkin framework (i.e.,  \eqref{uvsh} and \eqref{specf}), we obtain immediately that 
\begin{equation}\label{stranL}
\left\|u-u_{N}\right\|_{H^{s}(\mathbb R^d)} \le c   \inf _{v_{N}\in {\mathcal V}_{\!N}^d}\left\|u-v_{N}\right\|_{H^{s}(\mathbb R^d)}.
\end{equation}
To obtain optimal error estimates, we have to resort to some intermediate approximation results related to certain orthogonal projection.  
To this end, we  consider the  $L^2$-orthogonal projection $\pi_{\!N}^d:L^2(\RR^d)\to {\mathcal V}_{\!N}^d$ such that
\begin{equation}\label{L2projA}
(\pi_{\!N}^d u-u, v)_{\RR^d}=0, \quad \forall v \in {\mathcal V}_{\!N}^d.
\end{equation}
 From  Definition \ref{adjointGHFs} and with a change of basis functions, we find readily that 
\begin{equation}\label{spacefinite}
{\mathcal V}_{\!N}^d := \text{span}\big\{\widehat{H}_{k,\ell}^{0,n}(\bx): \; 0\le n\le N, \; 1\le \ell\le a_n^d,~
0\le 2 k\le N-n,\; k, \ell, n\in {\mathbb N}_0\big\}. 
\end{equation}
Thus,  we can equivalently write 
\begin{equation}\label{definePin}
\pi_{\!N}^du(\bx)= \sum_{n=0}^N \sum_{\ell=1}^{a_n^d}\sum_{k=0}^{[\frac{N-n}{2}]} \widehat u_{k,\ell}^{n}\widehat{H}_{k,\ell}^{0,n}( \bx).
\end{equation}
 


Based on \refe{ghrdiff2}, we introduce the function space  $\H^{r}(\RR^d)$ equipped with the norm
\begin{equation}\label{defHrspace}
\|u\|^2_{\H^r(\RR^d)}=\begin{cases}
\|(-\Delta+|\bx|^2)^mu\|^2_{L^2(\RR^d)},   &r=2m,\\[10pt]
\displaystyle \frac12\Big(\|(\bx+\nabla)(-\Delta+|\bx|^2)^mu\|^2_{L^2(\RR^d)}+\|(\bx-\nabla)(-\Delta+|\bx|^2)^mu\|^2_{L^2(\RR^d)}\Big), & r=2m+1, 
\end{cases}
\end{equation}
where integer $r\geq0$, and $\bx+\nabla$ and $\bx-\nabla$ are the lowering and raising operators, respectively.

The main approximation result is stated below. 
\begin{theorem}\label{approxthm}
Let $s\in(0,1)$. For any $u\in \H^r(\RR^d)$ with integer $r\ge 1$, we have 
\begin{equation} \begin{split}
&\|\pi_{\!N}^d u-u\|_{H^{s}(\RR^d)} \leq (2N+d+2)^{(s-r)/2}\|u\|_{\H^r(\RR^d)}.
\end{split} \end{equation} 
\end{theorem}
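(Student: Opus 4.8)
The plan is to turn the estimate into a one–line eigenvalue computation by exploiting \eqref{ghrdiff2}: the GHFs with $\mu=0$ are exactly the eigenfunctions of the harmonic oscillator $\mathcal L:=-\Delta+|\bx|^2$. First I would record that $\{\widehat H_{k,\ell}^{0,n}:k\in\NN_0,\,(\ell,n)\in\Upsilon_\infty^d\}$ is a complete orthonormal system in $L^2(\RR^d)$ — orthonormality is \eqref{HerfunOrthA0} with $\mu=0$, and completeness follows from completeness of the Laguerre functions in the radial variable together with that of the spherical harmonics on $\sph^{d-1}$ — with $\mathcal L\,\widehat H_{k,\ell}^{0,n}=\lambda_{k,n}\widehat H_{k,\ell}^{0,n}$, where $\lambda_{k,n}:=4k+2n+d=2(2k+n)+d$. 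Expanding $u=\sum_{k,(\ell,n)}\widehat u_{k,\ell}^{\,n}\,\widehat H_{k,\ell}^{0,n}$, the next step is to identify the space $\H^r(\RR^d)$ of \eqref{defHrspace} with the ``spectral'' space, i.e. to show that
\[
\|u\|_{\H^r(\RR^d)}^2=\sum_{k,(\ell,n)}\lambda_{k,n}^{\,r}\,\big|\widehat u_{k,\ell}^{\,n}\big|^2,\qquad r\in\NN_0 .
\]
For even $r=2m$ this is immediate from $\mathcal L^{\,m}\widehat H_{k,\ell}^{0,n}=\lambda_{k,n}^m\widehat H_{k,\ell}^{0,n}$; for odd $r=2m+1$ it follows from the integration–by–parts identity $\|(\bx+\nabla)w\|_{L^2}^2+\|(\bx-\nabla)w\|_{L^2}^2=2\big((-\Delta+|\bx|^2)w,w\big)_{\RR^d}$ (the cross terms $\pm(\bx w,\nabla w)_{\RR^d}=\mp\tfrac d2\|w\|_{L^2}^2$ cancel) applied to $w=\mathcal L^{\,m}u$.

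Since $\pi_{\!N}^d$ is the $L^2$-orthogonal projection onto ${\mathcal V}_{\!N}^d=\mathrm{span}\{\widehat H_{k,\ell}^{0,n}:2k+n\le N\}$ (cf.\ \eqref{spacefinite}, \eqref{definePin}), it is plain truncation of this expansion, so $v:=u-\pi_{\!N}^d u=\sum_{2k+n\ge N+1}\widehat u_{k,\ell}^{\,n}\widehat H_{k,\ell}^{0,n}$, and every surviving mode satisfies $\lambda_{k,n}\ge 2(N+1)+d=2N+d+2$. It then remains to control $\|v\|_{H^s(\RR^d)}$ by $\big(\sum_{2k+n\ge N+1}\lambda_{k,n}^{\,s}|\widehat u_{k,\ell}^{\,n}|^2\big)^{1/2}$. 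The slick route: $\mathcal L=-\Delta+|\bx|^2\ge-\Delta\ge 0$ as nonnegative self-adjoint operators, so operator monotonicity of $t\mapsto t^s$, $s\in(0,1)$ (L\"{o}wner--Heinz), gives $\mathcal L^{\,s}\ge(-\Delta)^{\,s}$, whence $|v|_{H^s(\RR^d)}^2=\big((-\Delta)^{s}v,v\big)_{\RR^d}\le\big(\mathcal L^{\,s}v,v\big)_{\RR^d}=\sum_{2k+n\ge N+1}\lambda_{k,n}^{\,s}|\widehat u_{k,\ell}^{\,n}|^2$, and since $\lambda_{k,n}\ge1$ the same sum dominates $\|v\|_{L^2(\RR^d)}^2$; an elementary alternative is to combine the interpolation inequality $\|v\|_{H^s}\le c\,\|v\|_{L^2}^{1-s}\|v\|_{H^1}^{s}$ with $\|\nabla v\|_{L^2}^2\le(\mathcal L v,v)_{\RR^d}$. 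Finally, because $s-r<0$ and $\lambda_{k,n}\ge 2N+d+2$ on the tail,
\[
\sum_{2k+n\ge N+1}\lambda_{k,n}^{\,s}\big|\widehat u_{k,\ell}^{\,n}\big|^2=\sum_{2k+n\ge N+1}\lambda_{k,n}^{\,s-r}\lambda_{k,n}^{\,r}\big|\widehat u_{k,\ell}^{\,n}\big|^2\le(2N+d+2)^{\,s-r}\,\|u\|_{\H^r(\RR^d)}^2 ,
\]
and taking square roots gives the claimed bound.

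The only genuinely non-formal step is relating the \emph{nonlocal} fractional Sobolev norm $\|\cdot\|_{H^s(\RR^d)}$ to the spectral (harmonic–oscillator) norm $\big(\sum\lambda_{k,n}^{\,s}|\widehat u_{k,\ell}^{\,n}|^2\big)^{1/2}$: the GHFs with $\mu=0$ diagonalise $-\Delta+|\bx|^2$ but neither $-\Delta$ nor $(-\Delta)^s$, so one cannot compare termwise and must appeal to operator monotonicity of $t\mapsto t^s$ (or to space interpolation together with the estimate $\|\nabla v\|_{L^2}^2\le(\mathcal L v,v)_{\RR^d}$). This is also the place where constants have to be tracked: the clean exponent $(s-r)/2$ falls out of the eigenvalue bookkeeping, while the absolute prefactor is cleanest — indeed equal to $1$ — when the estimate is phrased for the $H^s$-seminorm of $u-\pi_{\!N}^d u$, the full-norm version then following from $\lambda_{k,n}\ge1$.
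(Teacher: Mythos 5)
Your argument is correct, and its decisive step is genuinely different from the paper's. The two proofs coincide in the first half: the paper also derives the spectral characterization $\|u\|_{\H^r(\RR^d)}^2=\sum\lambda_{k,n}^{r}|\widehat u^{\,n}_{k,\ell}|^2$ with $\lambda_{k,n}=4k+2n+d$ (the odd-$r$ case via exactly your integration-by-parts identity, cf.\ \eqref{etemp1}), notes that all tail modes have $\lambda_{k,n}\ge 2N+d+2$, and reads off the $L^2$ bound. Where you part ways is in converting spectral decay into an $H^s$ bound: the paper proves a second, separate estimate $\|\nabla(\pi_{\!N}^du-u)\|_{L^2(\RR^d)}^2\le(2N+d+2)^{1-r}\|u\|_{\H^r(\RR^d)}^2$ using $\|\nabla w\|_{L^2(\RR^d)}^2\le((-\Delta+|\bx|^2)w,w)_{\RR^d}$ and then interpolates between $L^2$ and $H^1$ via \eqref{orthNs22} --- precisely your ``elementary alternative'' --- whereas your primary route applies the L\"owner--Heinz (Heinz--Kato) inequality to $0\le-\Delta\le-\Delta+|\bx|^2$ (valid here in the form sense) to get $((-\Delta)^{s}v,v)_{\RR^d}\le\sum_{2k+n\ge N+1}\lambda_{k,n}^{s}|\widehat u^{\,n}_{k,\ell}|^2$ in one stroke. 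Your route eliminates the intermediate $H^1$ estimate, extends verbatim to $s=1$, and is sharp on the seminorm constant, at the price of invoking operator monotonicity for unbounded self-adjoint operators, which is a deeper (though standard) fact than the interpolation inequality the paper uses. Both arguments carry the same cosmetic slack in the stated prefactor $1$: reassembling the full $H^s$ norm from the $L^2$ piece and the seminorm (or, in the paper, using the full $H^1$ norm inside the interpolation) costs an absolute factor such as $\sqrt2$, which you correctly flag.
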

\begin{proof}
(i). We first estimate the $L^2$-error.  
For $r=2m+1$, a direct calculation gives 
\begin{equation}\begin{split}\label{etemp1}
\|u\|_{\H^r(\RR^d)}^2&=\frac12\Big(\|(\bx+\nabla)(-\Delta+|\bx|^2)^mu\|^2_{L^2(\RR^d)}+\|(\bx-\nabla)(-\Delta+|\bx|^2)^mu\|^2_{L^2(\RR^d)}\Big)\\&=\big((-\Delta+|\bx|^2)^{m+1}u,(-\Delta+|\bx|^2)^mu\big)_{\RR^d}.
\end{split}\end{equation}
Thanks to the orthogonality \eqref{HerfunOrthA0}, \eqref{ghrdiff2}-\eqref{defHrspace} and \eqref{etemp1},  we have that for any $r\geq 0,$
\begin{equation}\label{deru}
\|u\|^2_{\H^r(\RR^d)}=\sum_{n=0}^\infty \sum_{\ell=1}^{a_n^d}\sum_{k=0}^\infty h^{n,r}_{k,d}|\hat{u}^{n}_{k,\ell}|^{2}, 
\quad h^{n,r}_{k,d}=(4 k+2 n+d)^{r}.
\end{equation}
 Then, we  derive from  \eqref{definePin}  and \eqref{deru}  that
\begin{equation}\label{err1}
\begin{split}
\|\pi_{\!N}^d u-u\|^{2}_{L^2(\RR^d)}&=\sum_{n=0}^\infty  \sum_{\ell=1}^{a_n^d} 
\sum_{k=\lceil\frac{N+1-n}{2}\rceil}^\infty  h^{n,0}_{k, d}  |\hat{u}^{n}_{k,\ell}|^2
 \\& \leq \max _{2k+n\ge N+1}  \left\{\frac{h^{n,0}_{k, d}}{h^{n,r}_{k,d}}\right\}  \sum_{n=0}^\infty \sum_{\ell=1}^{a_n^d}\sum_{k=\lceil\frac{N+1-n}{2}\rceil}^\infty h^{n,r}_{k,d}|\hat{u}^{n}_{k,\ell}|^2
 \\&\leq (2N+2+d)^{-r}\|u\|^2_{\H^r(\RR^d)}.
\end{split}
\end{equation}

If $r=2m, $ we find  from \eqref{defHrspace} that \eqref{etemp1} simply becomes 
\begin{equation}\begin{split}\label{etemp2}
\|u\|_{\H^r(\RR^d)}^2&=\big((-\Delta+|\bx|^2)^{m}u,(-\Delta+|\bx|^2)^mu\big)_{\RR^d},
\end{split}\end{equation}
so we can follow the same lines as above to derive the $L^2$-estimate.

\smallskip 
(ii). We next estimate the $H^1$-error. 
Using the triangle inequality and \refe{etemp1}, we obtain that
\begin{equation}\begin{split}\label{deruerr} 
\|\nabla(\pi_{\!N}^d u&-u)\|^2_{L^2(\RR^d)}\leq  \frac12\Big(\|(\bx+\nabla)(\pi_{\!N}^d u-u)\|^2_{L^2(\RR^d)}+\|(\bx-\nabla)(\pi_{\!N}^d u-u)\|^2_{L^2(\RR^d)}\Big)  
\\&=\left((-\Delta+|\bx|^2)(\pi_{\!N}^d u-u),(\pi_{\!N}^d u-u)\right)_{\RR^d}\leq \sum_{n=0}^\infty \sum_{\ell=1}^{a_n^d}\sum_{k=\lceil\frac{N+1-n}{2}\rceil}^\infty h^{n,1}_{k,d}|\hat{u}^{n}_{k,\ell}|^{2}
\\&\leq \max _{2k+n\geq N+1}\left\{\frac{h^{n,1}_{k,d}}{h^{n,r}_{k,d}}\right\}\sum_{n=0}^\infty \sum_{\ell=1}^{a_n^d}\sum_{k=\lceil\frac{N+1-n}{2}\rceil}^\infty h^{n,r}_{k,d}|\hat{u}^{n}_{k,\ell}|^2 \\&\leq (2N+2+d)^{1-r}\|u\|^2_{\H^r(\RR^d)}. 
 \end{split}\end{equation} 
 
 Finally, the desired results can be obtained by the $L^2$- and $H^1$-bounds derived above and the following space interpolation inequality  (cf. \cite[Ch. 1]{Agranovich2015Book})
 \begin{equation}\label{orthNs22}
 \|u \|_{H^s(\mathbb R^d)}\le\|u \|_{L^2(\mathbb R^d)}^{1-s}\,\|u \|_{H^1(\mathbb R^d)}^{s},\quad s\in (0,1).
\end{equation}
This ends the proof.
\end{proof}

Taking $v_{N}=\pi_{N}^du$ in \refe{stranL} and using Theorem \ref{approxthm}, we immediately obtain the following
error estimate.
\begin{theorem}
Let $u$ and $u_{N}$ be the solutions to \eqref{uvsh} and \eqref{specf}, respectively.
If $u\in \H^r(\RR^d)$ with integer  $r\ge 1$, then we have
\begin{equation}\label{errbound} \begin{split}
&\|u-u_{N}\|_{H^{s}(\RR^d)} \le c (2N+d+2)^{(s-r)/2}\|u\|_{\H^r(\RR^d)},\quad s\in (0,1),
\end{split} \end{equation} 
where $c$ is a positive constant independent of $N$ and $u.$
\end{theorem}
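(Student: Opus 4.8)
The plan is to combine the abstract Galerkin bound \eqref{stranL} with the projection estimate of Theorem \ref{approxthm}. Since the bilinear form $\mathcal{A}_s(\cdot,\cdot)$ is continuous and coercive on $H^s(\RR^d)$ and the discrete space ${\mathcal V}_{\!N}^d$ is a conforming subspace of $H^s(\RR^d)$ with no variational crime, the first Strang lemma (here just C\'ea's lemma) already gives
\begin{equation*}
\|u-u_{N}\|_{H^{s}(\RR^d)} \le c\,\inf_{v_{N}\in {\mathcal V}_{\!N}^d}\|u-v_{N}\|_{H^{s}(\RR^d)},
\end{equation*}
with $c$ depending only on the continuity and coercivity constants (hence on $s$ and $\gamma$), but independent of $N$ and $u$.

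First I would bound the infimum from above by taking the specific competitor $v_{N}=\pi_{\!N}^d u\in{\mathcal V}_{\!N}^d$, the $L^2(\RR^d)$-orthogonal projection defined in \eqref{L2projA}; this is admissible because $\pi_{\!N}^d u$ lies in the approximation space. Then I would invoke Theorem \ref{approxthm} with the given $u\in \H^r(\RR^d)$ — the hypothesis $r\ge 1$ is compatible with $s\in(0,1)$ — to get
\begin{equation*}
\|\pi_{\!N}^d u-u\|_{H^{s}(\RR^d)} \leq (2N+d+2)^{(s-r)/2}\|u\|_{\H^r(\RR^d)}.
\end{equation*}
Chaining the two displayed inequalities yields precisely \eqref{errbound}.

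There is no real obstacle here; the substance lies in the already-established ingredients, namely the continuity/coercivity of $\mathcal{A}_s$ derived from \eqref{eq:LapDF} and the spectral-decay estimate of Theorem \ref{approxthm} (which itself rests on the eigen-relation \eqref{ghrdiff2}, the orthogonality \eqref{HerfunOrthA0}, and the $L^2$--$H^1$ interpolation inequality \eqref{orthNs22}). The only point I would spell out for cleanliness is that $\pi_{\!N}^d u$ is genuinely an element of the trial/test space: the two descriptions of ${\mathcal V}_{\!N}^d$ in \eqref{spacefinite0} (via the A-GHFs $\widecheck{H}_{k,\ell}^{s,n}$) and in \eqref{spacefinite} (via the GHFs $\widehat{H}_{k,\ell}^{0,n}$) coincide, by the triangular connection in Definition \ref{adjointGHFs} together with \eqref{adherGHFs}; and I would note once more that the constant $c$ carries no hidden $N$-dependence, since the C\'ea constant is $N$-free and the projection estimate has constant exactly $1$.
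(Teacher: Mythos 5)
Your proposal is correct and follows exactly the paper's argument: the paper also obtains \eqref{errbound} by taking $v_{N}=\pi_{\!N}^d u$ in the quasi-optimality bound \eqref{stranL} and invoking Theorem \ref{approxthm}. The extra remarks you add (equivalence of the two descriptions of ${\mathcal V}_{\!N}^d$ and the $N$-independence of $c$) are sound and merely make explicit what the paper leaves implicit.
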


\subsubsection{Numerical results}
We conclude this section with some numerical results.  For the convenience of implementation, we fix the degree of the numerical solution in both radial and angular direction in \eqref{uexpansion}, so the numerical solution takes the form
\begin{equation}\label{unk}
u_{\!N,K}(\bx)= \sum_{n=0}^{N} \sum_{\ell=1}^{a_n^d}\sum_{k=0}^{K} \hat u_{k,\ell}^{n}\,\widecheck{H}_{k,\ell}^{s,n}(\bx).
\end{equation}
Here, we focus on $d=2,3.$

 \begin{figure}[!th]
\subfigure[$d=2$ and $u_e(\bx)=e^{-|\bx|^2}$]{
\begin{minipage}[t]{0.42\textwidth}
\centering
\rotatebox[origin=cc]{-0}{\includegraphics[width=0.85\textwidth]{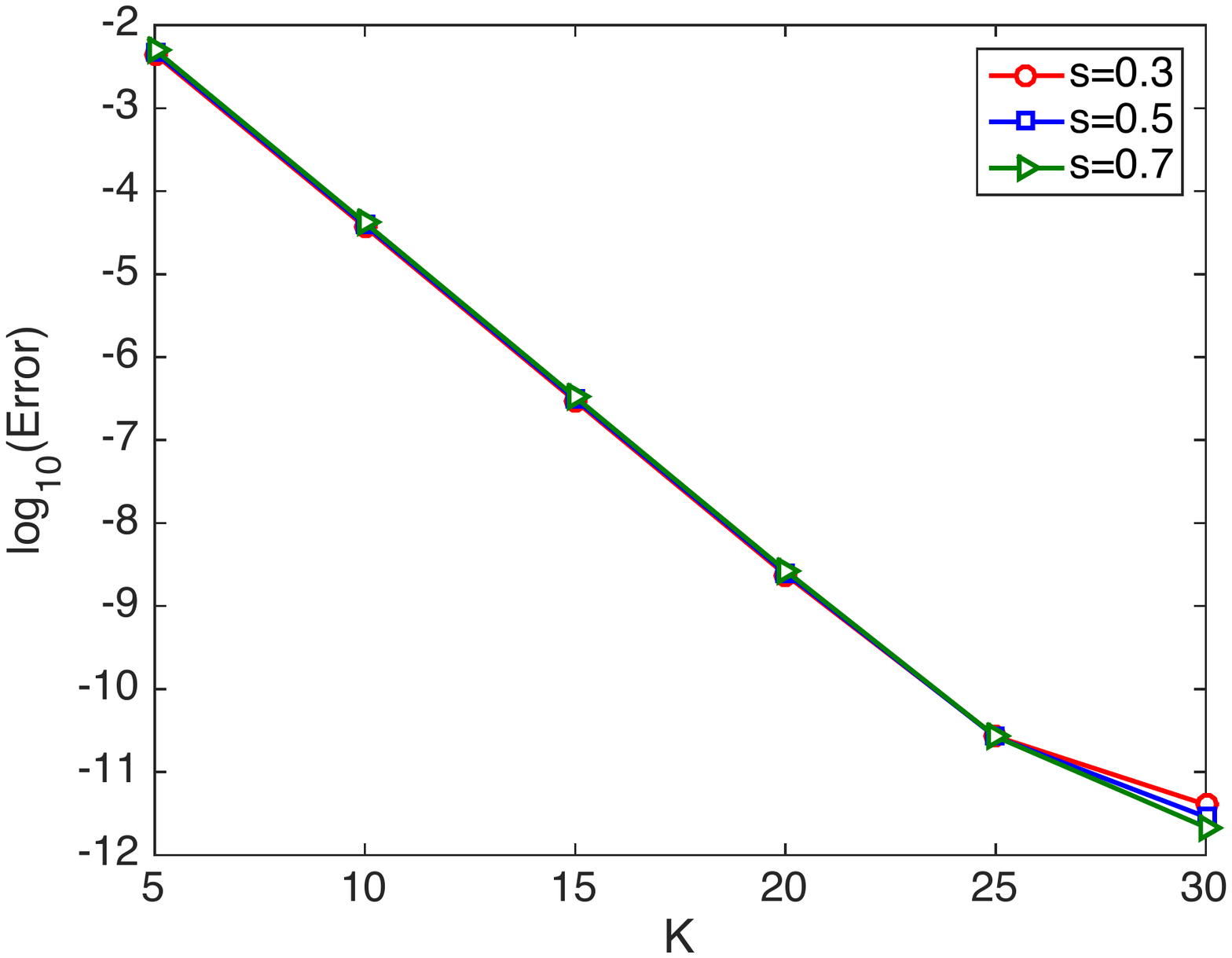}}
\end{minipage}}
\subfigure[$d=2$ and $u_a(\bx)=(1+|\bx|^2)^{-2}$]{
\begin{minipage}[t]{0.42\textwidth}
\centering
\rotatebox[origin=cc]{-0}{\includegraphics[width=0.85\textwidth]{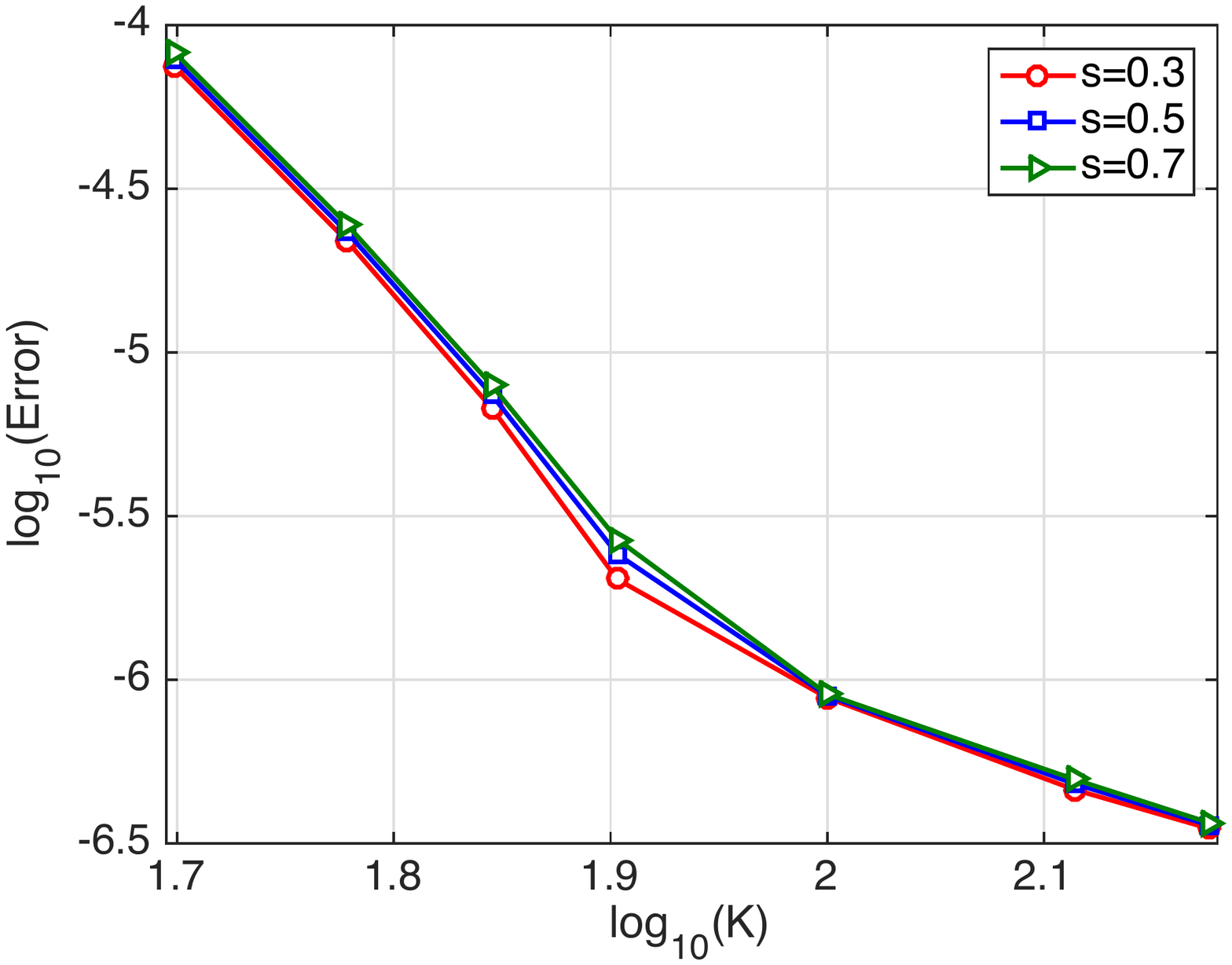}}
\end{minipage}}\vskip -5pt

\subfigure[$d=3$ and $u_e(\bx)=e^{-|\bx|^2}$]{
\begin{minipage}[t]{0.42\textwidth}
\centering
\rotatebox[origin=cc]{-0}{\includegraphics[width=0.85\textwidth]{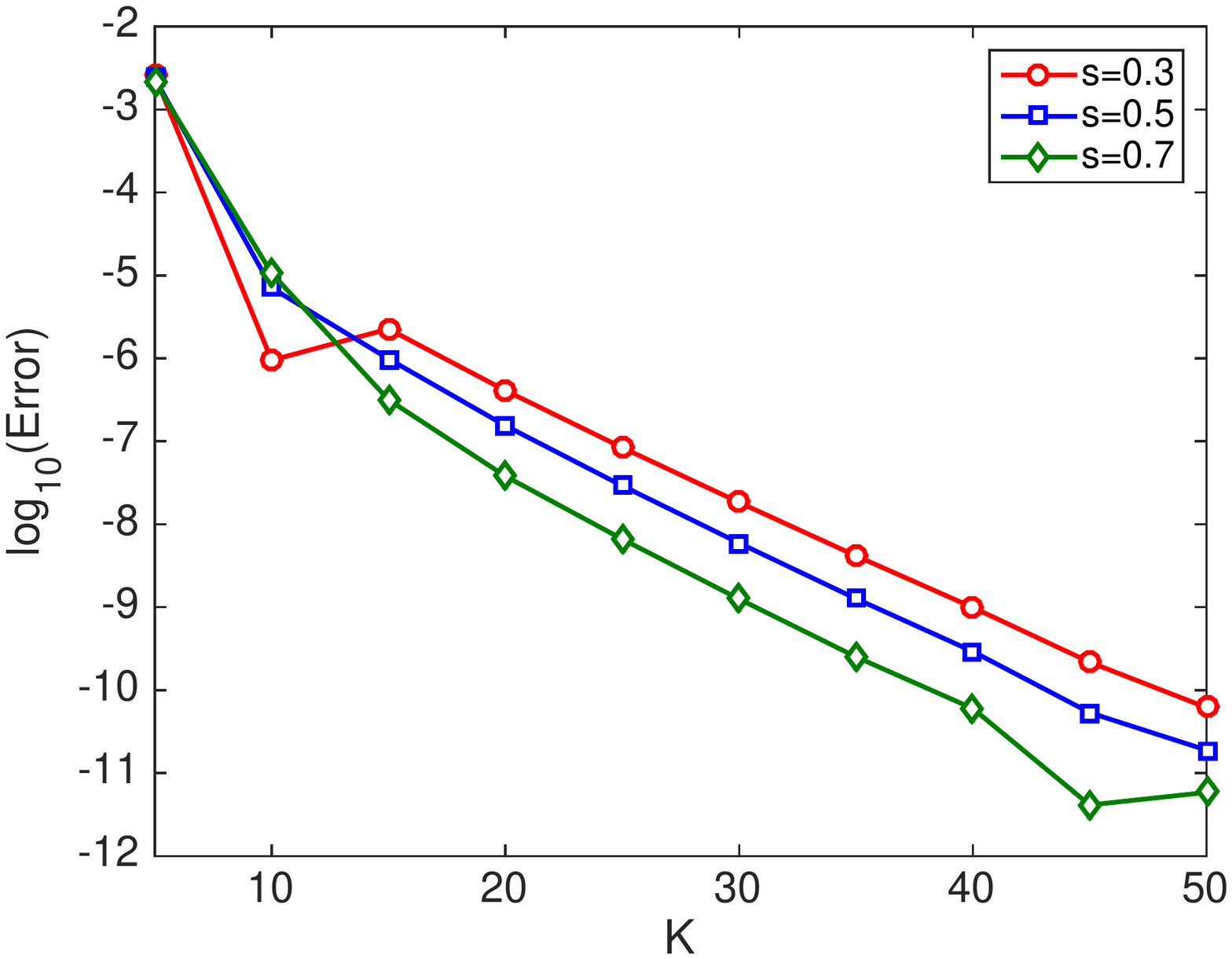}}
\end{minipage}}
\subfigure[$d=3$ and $u_a(\bx)=(1+|\bx|^2)^{-2}$]{
\begin{minipage}[t]{0.42\textwidth}
\centering
\rotatebox[origin=cc]{-0}{\includegraphics[width=0.85\textwidth]{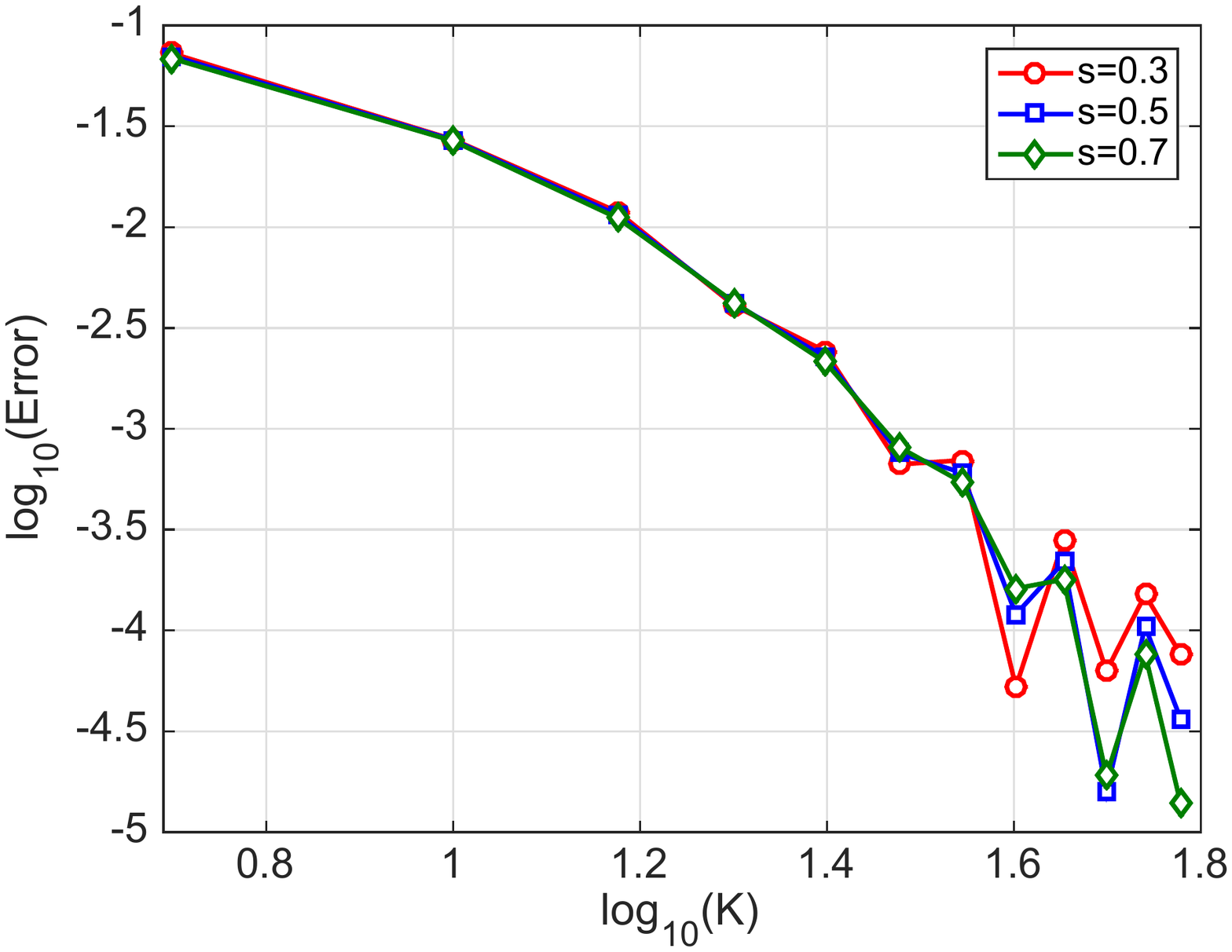}}
\end{minipage}}
\caption
{\small  The maximum errors of the GHF-spectral-Galerkin scheme with $\gamma=1$ for Example \ref{some_exact}  with exact solutions in \eqref{test1}. Here  $s=0.3,~0.5,~0.7$.}\label{Example1}
\end{figure}

  \begin{figure}[!th]
\subfigure[$d=2$ with given source term $f_e(\bx)$]{
\begin{minipage}[t]{0.42\textwidth}
\centering
\rotatebox[origin=cc]{-0}{\includegraphics[width=0.85\textwidth]{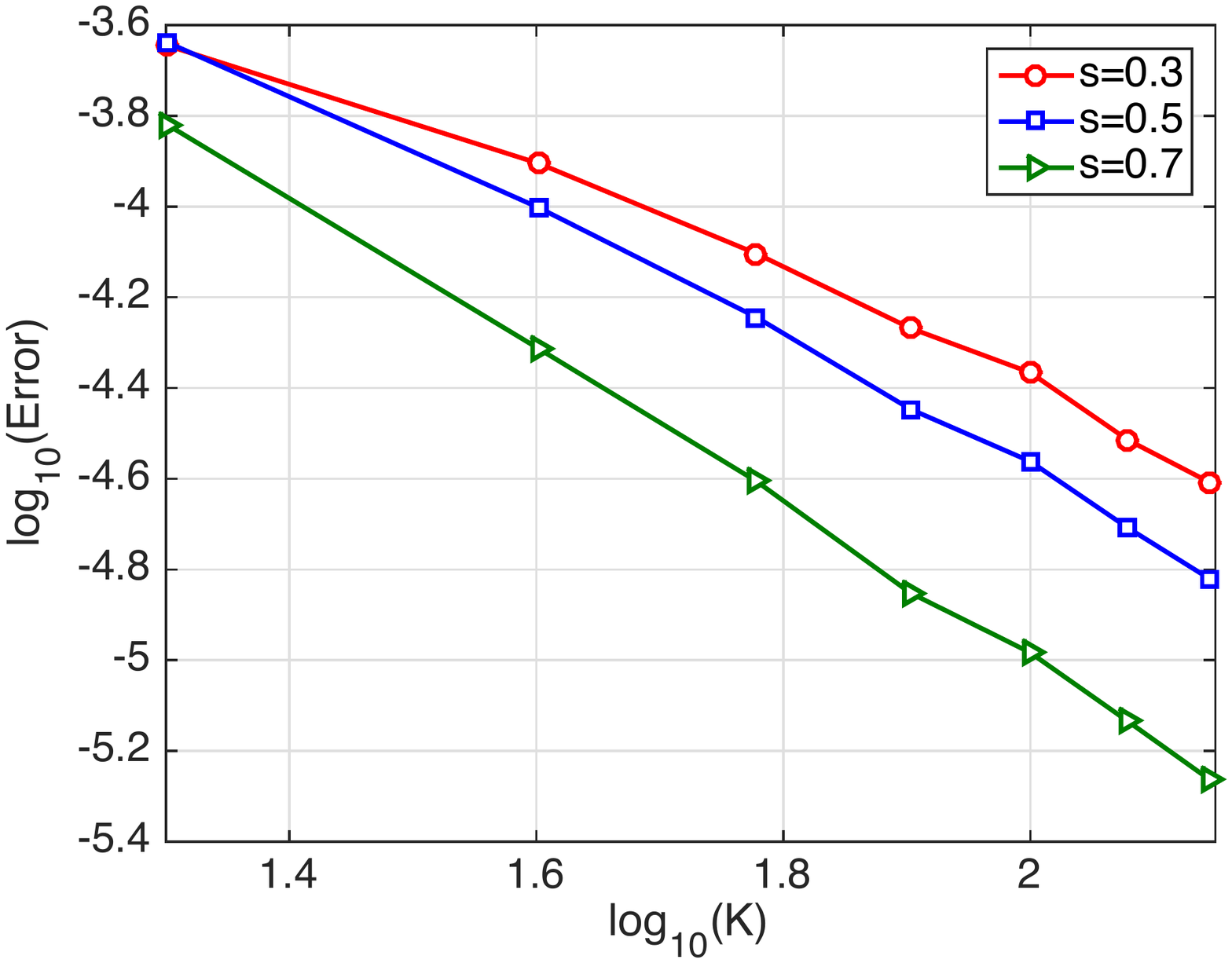}}
\end{minipage}}
\subfigure[$d=2$ with given source term $f_a(\bx)$]{
\begin{minipage}[t]{0.42\textwidth}
\centering
\rotatebox[origin=cc]{-0}{\includegraphics[width=0.85\textwidth]{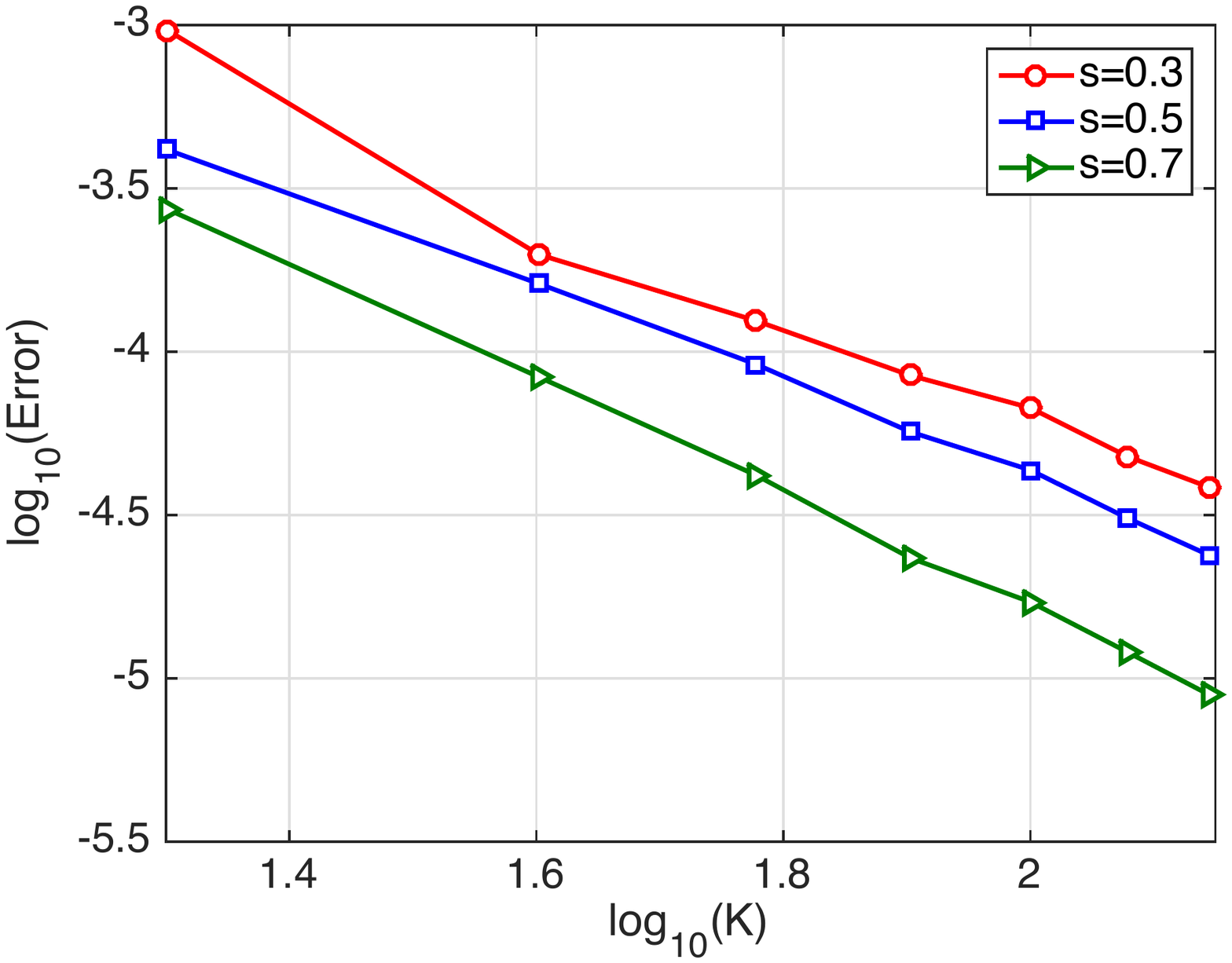}}
\end{minipage}}\vskip -5pt

\subfigure[$d=3$ with given source term $f_e(\bx)$]{
\begin{minipage}[t]{0.42\textwidth}
\centering
\rotatebox[origin=cc]{-0}{\includegraphics[width=0.85\textwidth]{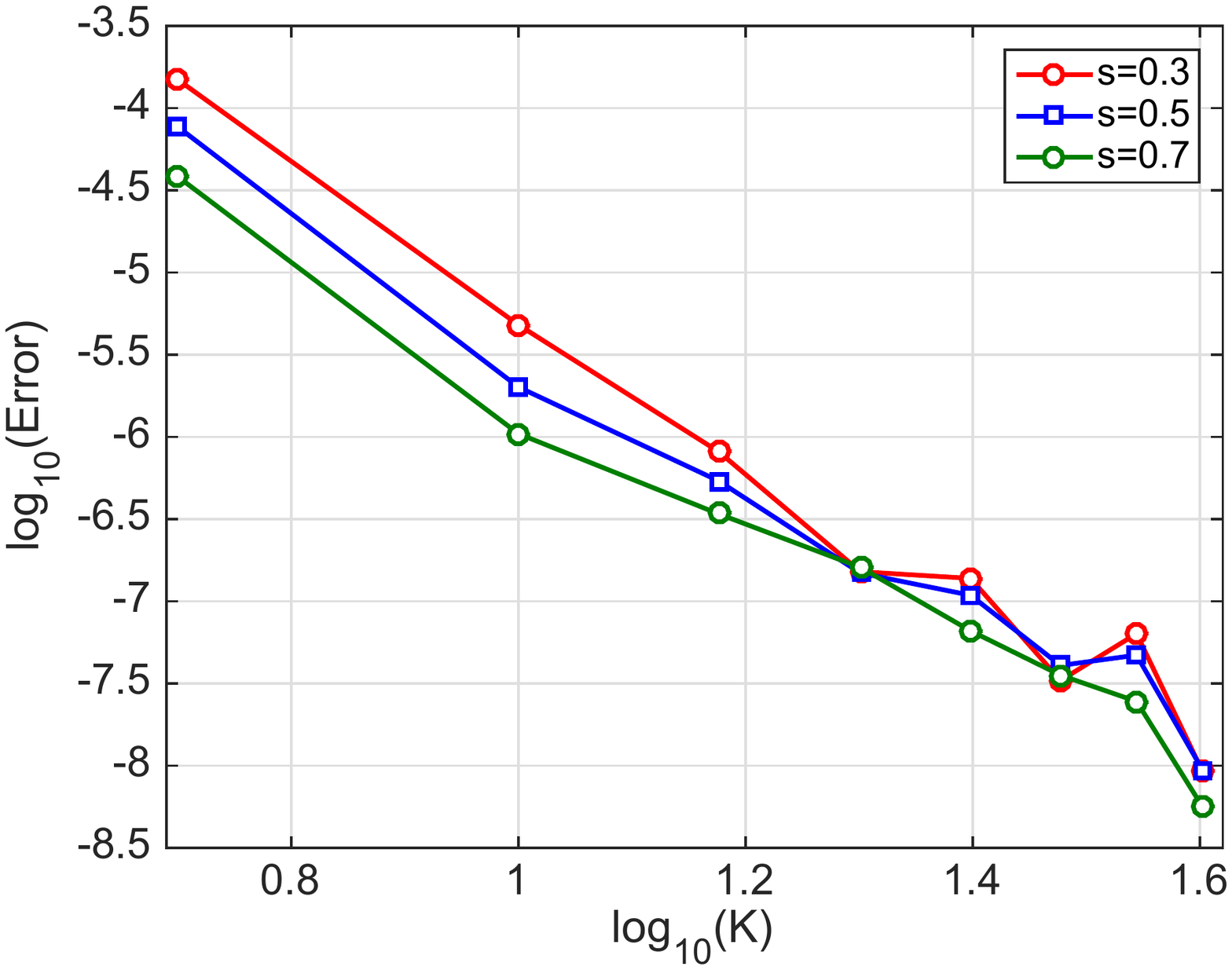}}
\end{minipage}}
\subfigure[$d=3$ with given source term $f_a(\bx)$]{
\begin{minipage}[t]{0.42\textwidth}
\centering
\rotatebox[origin=cc]{-0}{\includegraphics[width=0.85\textwidth]{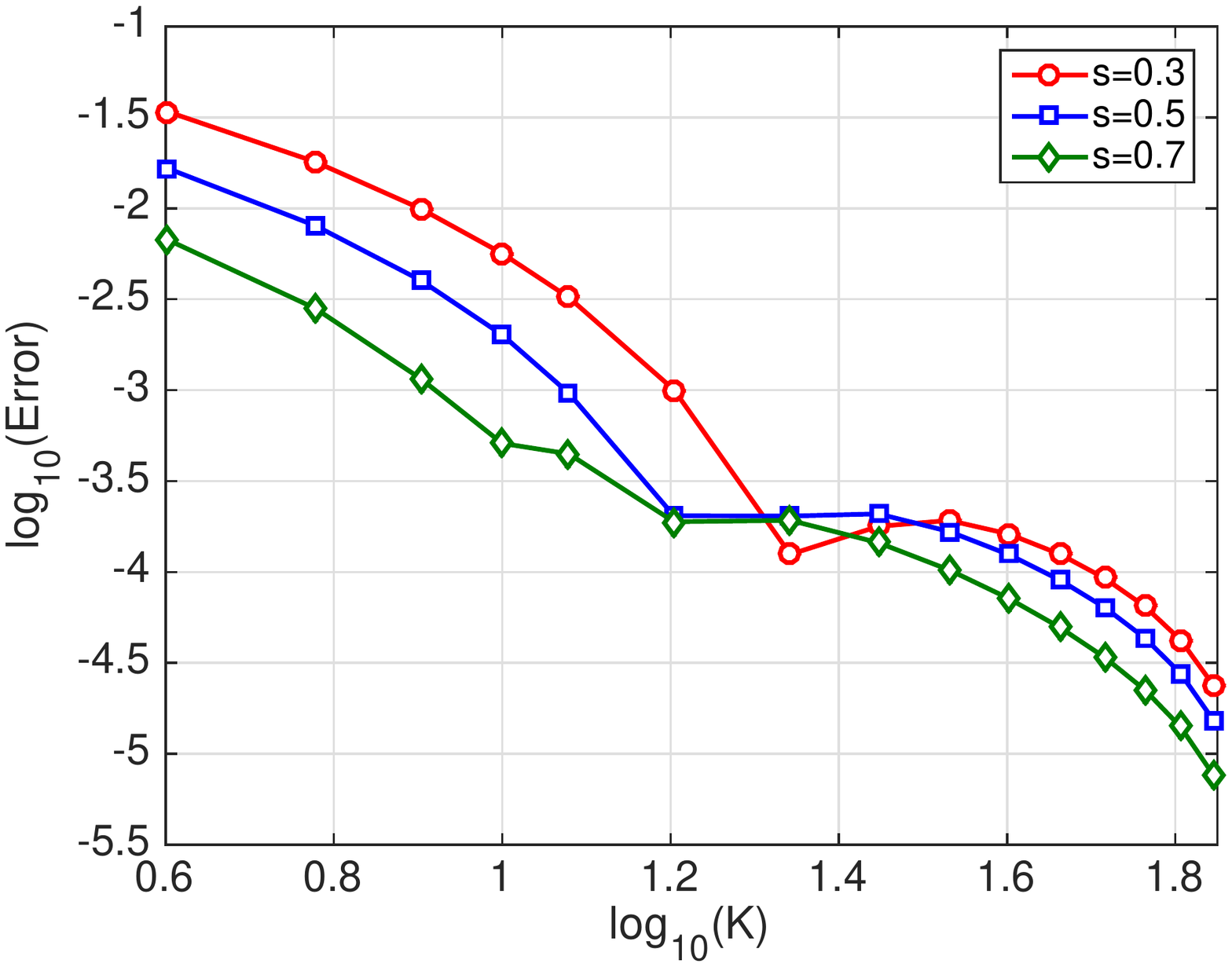}}
\end{minipage}}\vskip -5pt
\caption
{\small  The maximum errors of the GHF-spectral-Galerkin scheme with $\gamma=1$ for Example \ref{some_source}  with given source functions in \eqref{test2}. Here  $s=0.3,~0.5,~0.7$ and $r=2$.}\label{fig1dsource}
\end{figure}

\begin{example}\label{some_exact} {\bf (Problem \eqref{soureceprob} with exact solution).} We first consider \eqref{soureceprob} with the following exact solutions:
 \begin{equation}\begin{split}\label{test1}
 &u_e(\bx)=\e^{-|\bx|^2},\quad u_a(\bx)=(1+|\bx|^2)^{-r},\quad r>0,\;\; \bx\in {\mathbb R^d}.
  \end{split}\end{equation}
According to \cite[Prop.\! 4.2 \& 4.3]{sheng2019}, the source terms $f_e(\bx)$ and $f_a(\bx)$ are respectively given by
\begin{eqnarray}\label{fasy}
&&f_e(\bx)= \gamma \e^{-|\bx|^2}+\frac{ 2^{2s} \Gamma(s+d/2)}{\Gamma(d/2)}\, {}_{1} F_{1}\Big(s+\frac{d}{2};\frac{d}{2};-|\bx|^2\Big) ,\nonumber
\\&&f_a(\bx)= \gamma (1+|\bx|^2)^{-r}+\frac{2^{2s}\Gamma(s+r)\Gamma(s+d/2)}{ \Gamma(r)\Gamma(d/2)} {}_2F_1\Big(s+r,s+\frac{d}{2};\frac{d}{2};-|\bx|^2\Big).\nonumber
\end{eqnarray}

For $d=2,3$, we take $s=0.3,~0.5,~0.7$ and the degree in angular direction is fixed $N\equiv 10$ (see \eqref{unk}). In Figure \ref{Example1} (c)-(f), we plot the maximum errors, in semi-log scale and log-log scale,  for $u_e$ and $u_a$ with $d=2,3$ against various $K$, respectively.
As expected, we observe the exponential and algebraic  convergence for $u_e$ and $u_a, $ respectively.
\end{example}

\begin{example}\label{some_source} {\bf (Problem \eqref{soureceprob} with a source term).} We next consider \eqref{soureceprob} with the following source functions:
 \begin{equation}\begin{split}\label{test2}
 &f_e(\bx)=\sin(|\bx|)\e^{-|\bx|^2},\quad f_a(\bx)=\cos(|\bx|)(1+|\bx|^2)^{-r},\quad r>0,\;\;\bs x\in {\mathbb R^d}.
  \end{split}\end{equation}

\end{example}

The exact solutions are unknown, and we use the numerical solution with $K=80$, $N=20$  as the reference solution.
 For $d=2,3$, we plot the maximum errors, in log-log scale, for \eqref{soureceprob} against various $K$ in Figure \ref{fig1dsource} (c)-(f), which we take $s=0.3,~0.5,~0.7$ and fix $N\equiv 10$.  As shown in \cite{sheng2019},
  the solution of \eqref{soureceprob}  decays algebraically, even for exponentially decaying  source terms. Indeed, we observe an algebraic order of convergence.

  \subsection{GHF-spectral-Galerkin method for fractional Schr\"{o}dinger equations}\label{fractionalfNLS}
   
 As a  second example,  we consider the fractional Schr\"{o}dinger equation:
\begin{equation}\label{fracGPE}
\begin{aligned}
&{\rm i} \partial_{t} \psi(\bs x, t)=\Big[\frac12(-\Delta)^s+\frac{\gamma^{2}}{2}  |\bs x|^{2\mu}\Big] \psi(\bs x, t),\;\;\; \bs x\in \RR^2,\;\; t>0,\\&
\psi(\bs x,0)=\psi_0(\bs x),\;\;\bs x\in \RR^2; \quad \psi(\bs x,t)\to 0\;\; {\rm as}\;\;  |\bs x|\to\infty, \;\;\; t\ge0, 
\end{aligned}
\end{equation}
 where $s\in (0,1]$, $\mu>-1/2$,  the constant $\gamma>0$, and the function
 $\psi_0$  is  given. 
Here,  we focus on the linear equation. Indeed,  using a suitable time-splitting scheme, 
one only needs to solve a linear Schr\"{o}dinger equation at each time step for some typical nonlinear cases 
(see, e.g., \cite{BLS2009}). 
 We remark that  the fractional Schr\"odinger equation   \eqref{fracGPE} is the model of interest  in the study of 
   fractional quantum mechanics, see \cite{laskin2000fractional,zhang2015}, where in \cite{laskin2000fractional}, 
   this fractional Hamiltonian appeared  more reasonable   to study the problem of quarkonium.

To solve \eqref{fracGPE} efficiently, we adopt the A-GHFs spectral method in space and the 
Crank-Nicolson scheme in time discretization.   Let $\Delta t$ be the time-stepping size, and $\psi^k(\bs x)\approx \psi(\bs x,k\Delta t).$ Then we look for  $\psi^{n+1}\in H^s(\RR^2)$ such that 
\begin{equation}\label{psiv}
\begin{split}
{\rm i}\Big(\frac{\psi^{n+1}-\psi^{n}}{\Delta t},v\Big)_{\mathbb R^2}=\frac12\big((-\Delta)^{\frac{s}{2}}\psi^{n+\frac12},(-\Delta)^{\frac{s}{2}}v\big)_{\mathbb R^2}+ \frac{\gamma^{2}}{2} (|\bs x|^{2\mu}\psi^{n+\frac12},v)_{\mathbb{R}^2},\quad \forall v\in  H^s(\mathbb R^2),
\end{split}
\end{equation}
where $\psi^{n+\frac12}=(\psi^{n+1}+\psi^{n})/2$.   We can implement the GHF-spectral scheme as with the problem \eqref{specf}, 
but only need to evaluate the matrix  $\bs V$ associated with the potential $|\bs x|^{2\mu}$.  It is a block diagonal matrix
\begin{equation}\label{Mmatrix00}
\begin{split}
&\bs V= \mathrm{diag}\big\{\bs V^0_{1}, \bs V^0_{2},  \cdots,{\bs V}_{a_0^d}^0, {\bs V}_1^1,{\bs V}_2^1,\cdots,{\bs V}_{a_1^d}^1, \cdots, {\bs V}_1^N, {\bs V}_2^N,\cdots,{\bs V}_{a_N^d}^N \big\}, 
\end{split}
\end{equation}
and the entries of each diagonal block can be evaluated explicitly  by using \eqref{HerfunOrthA0},   \eqref{coefrd1} and \eqref{adher}:  
\begin{equation}\label{Mnell00}
\begin{split}
(\bs V^n_{\ell})_{kj} &= \big(|\bs x|^{2\mu}\widecheck{H}_{k, \ell}^{s, n},  \widecheck{H}_{j, \ell}^{s, n}\big)_{\mathbb R^d}=\sum_{p=0}^k(-1)^{k-p}\;{}^s_0\CC^k_p \sum_{q=0}^j (-1)^{j-q}\;{}^s_0\CC^j_q\, \big(|\bs x|^{2\mu}\widehat{H}_{p,\ell}^{0,n},\widehat{H}_{q,\ell}^{0,n}\big)_{\mathbb R^d} 
\\&=\sum_{p=0}^k(-1)^{k-p}\;{}^s_0\CC^k_p \sum_{q=0}^j (-1)^{j-q}\;{}^s_0\CC^j_q\, \sum_{p^\prime=0}^p\;{}^0_\mu\CC^p_{p^\prime} \sum_{q^\prime=0}^q\;{}^0_\mu\CC^q_{q^\prime}\, \big(|\bs x|^{2\mu}\widehat{H}_{p^\prime,\ell}^{\mu,n},\widehat{H}_{q^\prime,\ell}^{\mu,n}\big)_{\mathbb R^d} 
\\ &= \sum_{p=0}^k(-1)^{k-p}\;{}^s_0\CC^k_p \sum_{q=0}^j (-1)^{j-q}\;{}^s_0\CC^j_q\, \sum_{p^\prime=0}^p\;{}^0_\mu\CC^p_{p^\prime} \sum_{q^\prime=0}^q\;{}^0_\mu\CC^q_{q^\prime}.
\end{split}
 \end{equation}

To test the accuracy of  the proposed method, we add an external source term $f(\bs x,t)$ so that  the exact solution is $\psi(\bs x,t)={\rm e}^{-|\bs x|^2-t}$.
 In Figure \ref{fignls} (a), we plot the maximum errors versus  $\Delta t$ at $t=1$, and the second-order convergence is observed.  Here we take $\gamma=1$, $N=10,$ $K=50$   and different $s,\mu$. 
We choose the time stepping size  to be  small so that the error is dominated by the spatial error. In Figure \ref{fignls} (b), we plot maximum errors in the semi-log scale versus various $K$, for which we take $N=10$, $\gamma=1$ and different $s,\mu$.  We observe that the spatial errors decay exponentially as $K$ increases.
 
  \begin{figure}[!th] 
\subfigure[Temporal errors]{
\begin{minipage}[t]{0.42\textwidth}
\centering
\rotatebox[origin=cc]{-0}{\includegraphics[width=1.0\textwidth,height=0.75\textwidth]{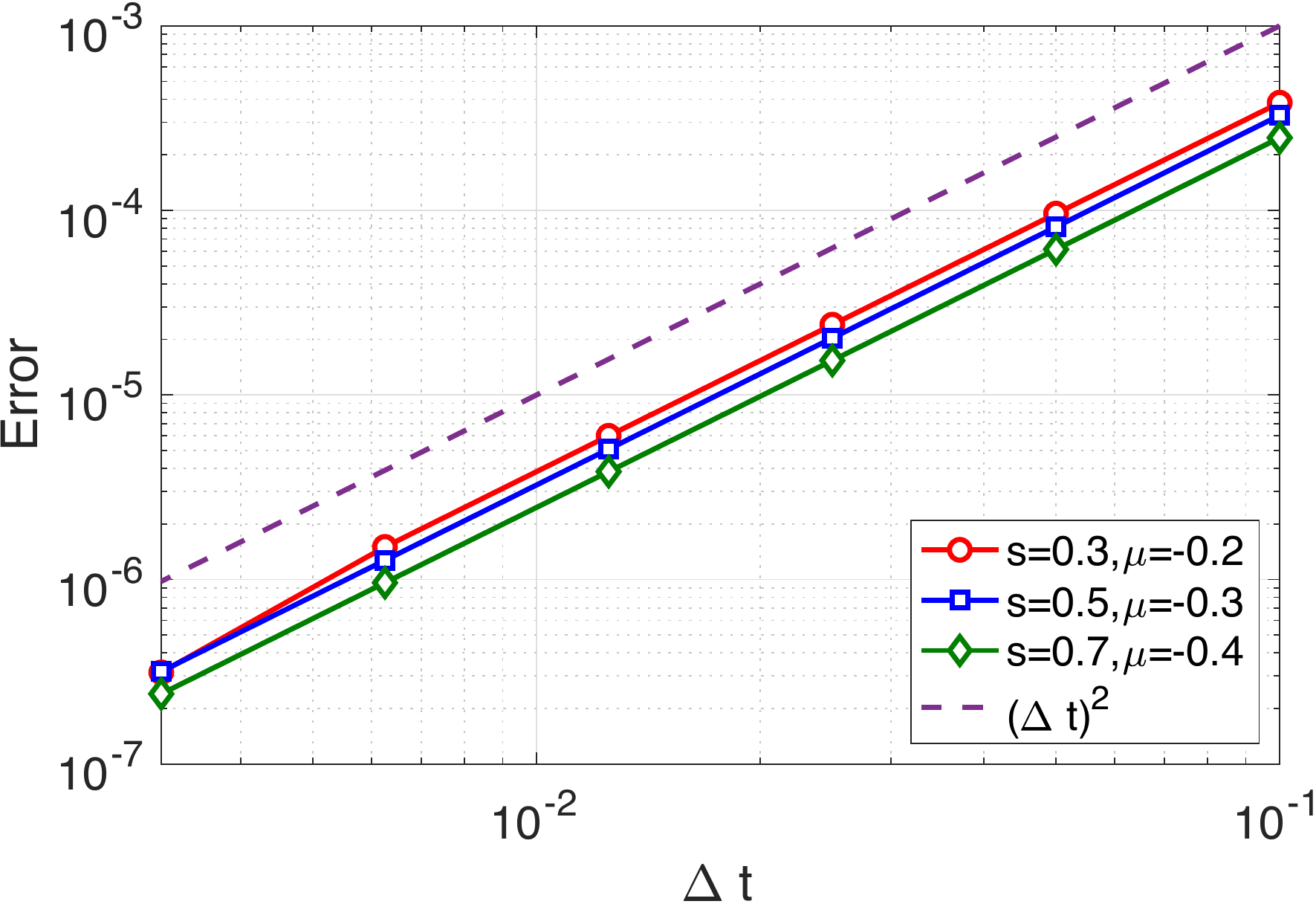}}
\end{minipage}}
\subfigure[Spatial errors]{
\begin{minipage}[t]{0.42\textwidth}
\centering
\rotatebox[origin=cc]{-0}{\includegraphics[width=1.0\textwidth,height=0.75\textwidth]{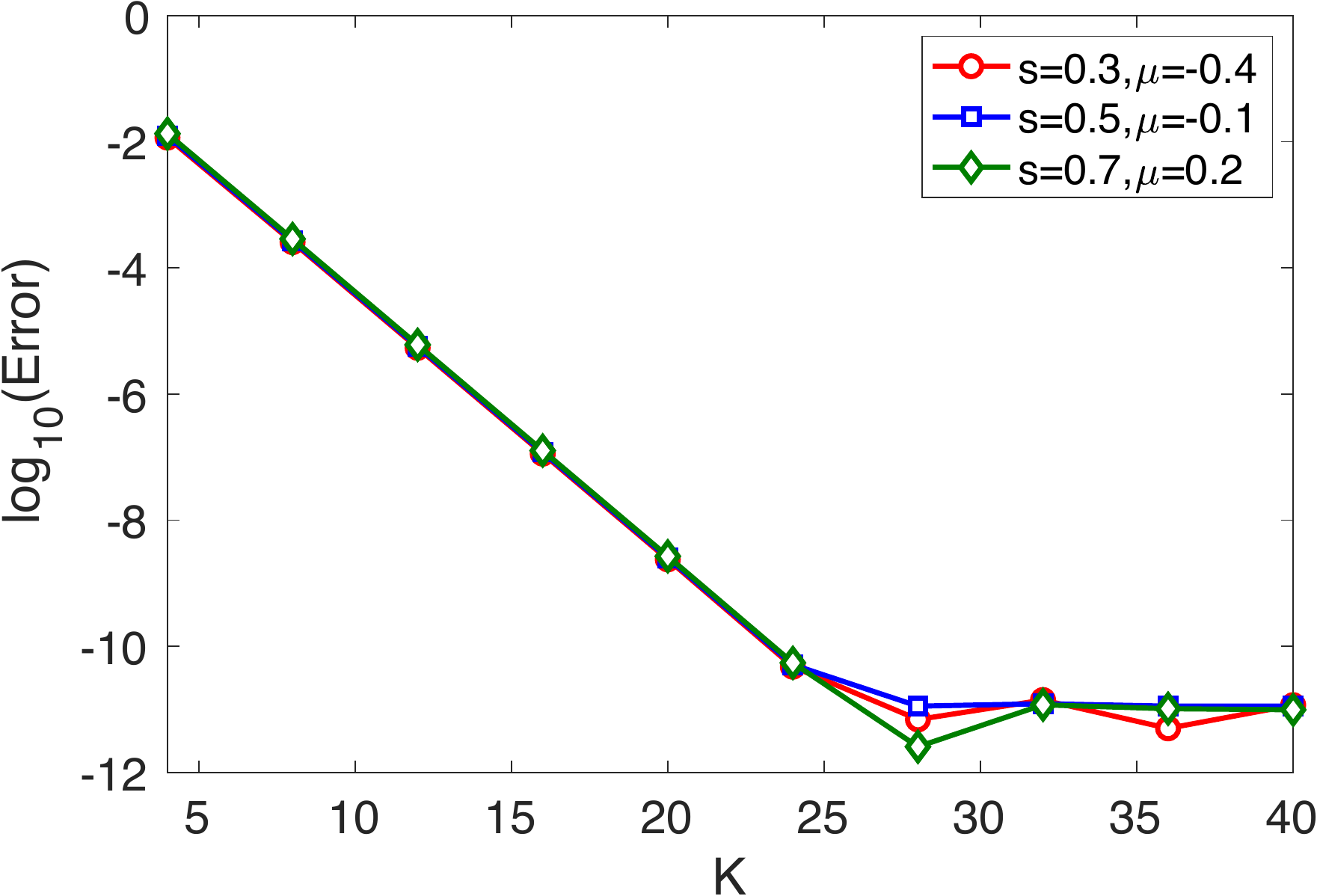}}
\end{minipage}}\vskip -5pt
 \caption
{\small  (a): The temporal errors for Crank-Nicolson scheme at $t=1$ with different $s,\mu$; (b): The spatial errors of GHF-spectral-Galerkin method at $t=1$ with different $s,\mu$.}\label{fignls}
\end{figure}
 
 
 Next, we investigate the dynamics of beam propagations as in \cite{zhang2015} (where the case $\mu=1$ was considered). 
 We take the following incident Gaussian beam as the initial condition: 
 \begin{equation}
 \psi(\bs x, 0)=\psi_0(\bs x)={\rm e}^{-\sigma|\bs x|^2-{\rm i}C|\bs x|},
 \end{equation}
 where the constants $\sigma$ and $C$ are  the beam width and the linear chirp coefficient, respectively.  
In the test, we take $\sigma=C=1$.
 In Figure \ref{Example3}, we depict the profiles of the real part of the numerical solutions for various  $s,\mu$ at $t=2$. 
 Figure \ref{Example3} (a)  shows the solution profile of the usual case with a harmonic potential: $-\Delta +|\bs x|^2$ for comparison.  We observe from the other profiles  that the solutions have different peak intensities and singular behaviours, from which we find the smaller the value of $\mu,$ and the stronger the singularity.  In fact, some similar observations was made 
 in \cite{zhang2015} for the case with $\mu=1.$
    
%
%
 \begin{figure}[!th]\hspace{-15pt}
\subfigure[$s=1$ and $\mu=1$]{
\begin{minipage}[t]{0.28\textwidth}
\centering
\rotatebox[origin=cc]{-0}{\includegraphics[width=0.9\textwidth]{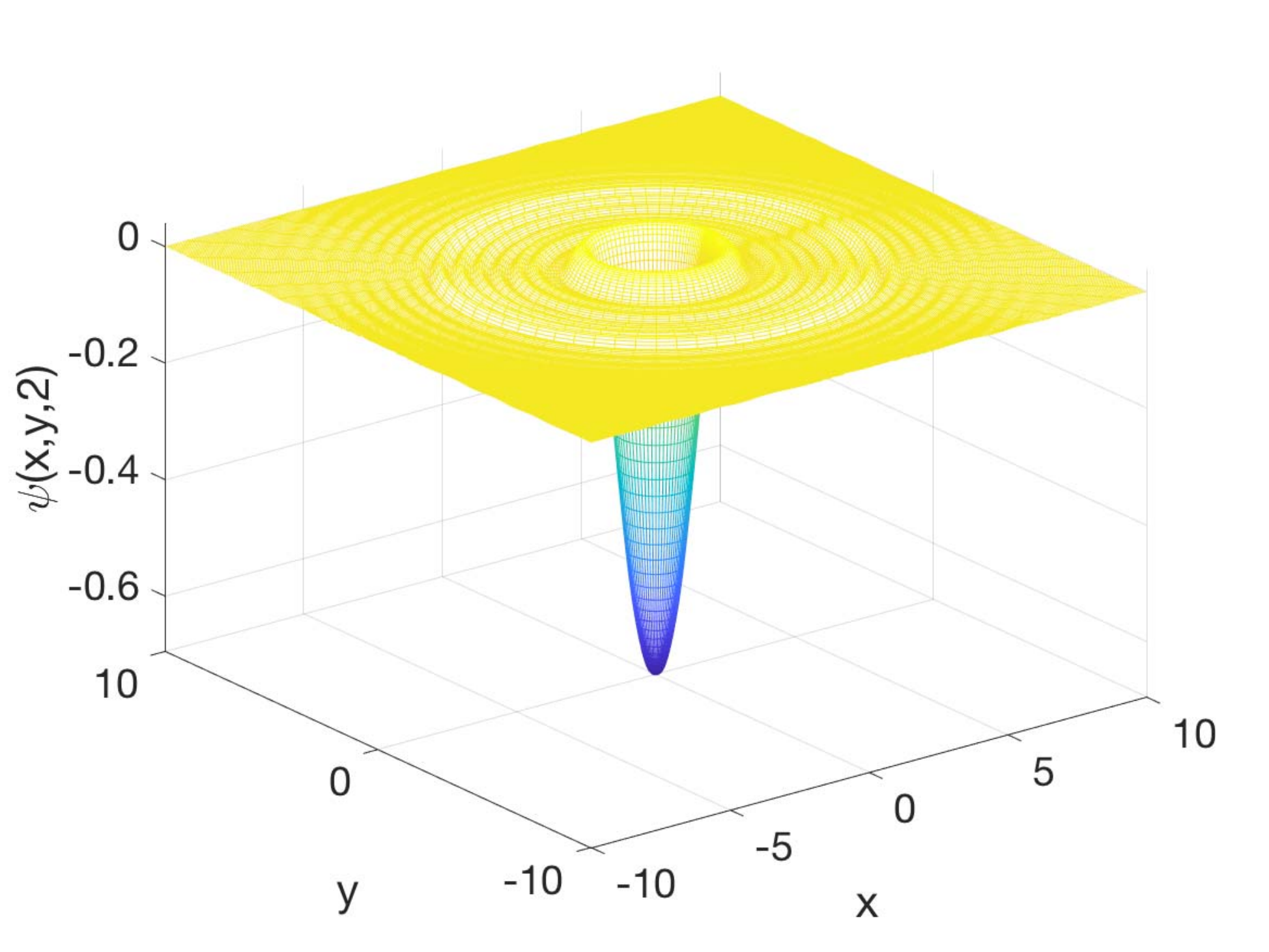}}
\end{minipage}}\hspace{-24pt}
\subfigure[$s=1$ and $\mu=0.7$]{
\begin{minipage}[t]{0.28\textwidth}
\centering
\rotatebox[origin=cc]{-0}{\includegraphics[width=0.9\textwidth]{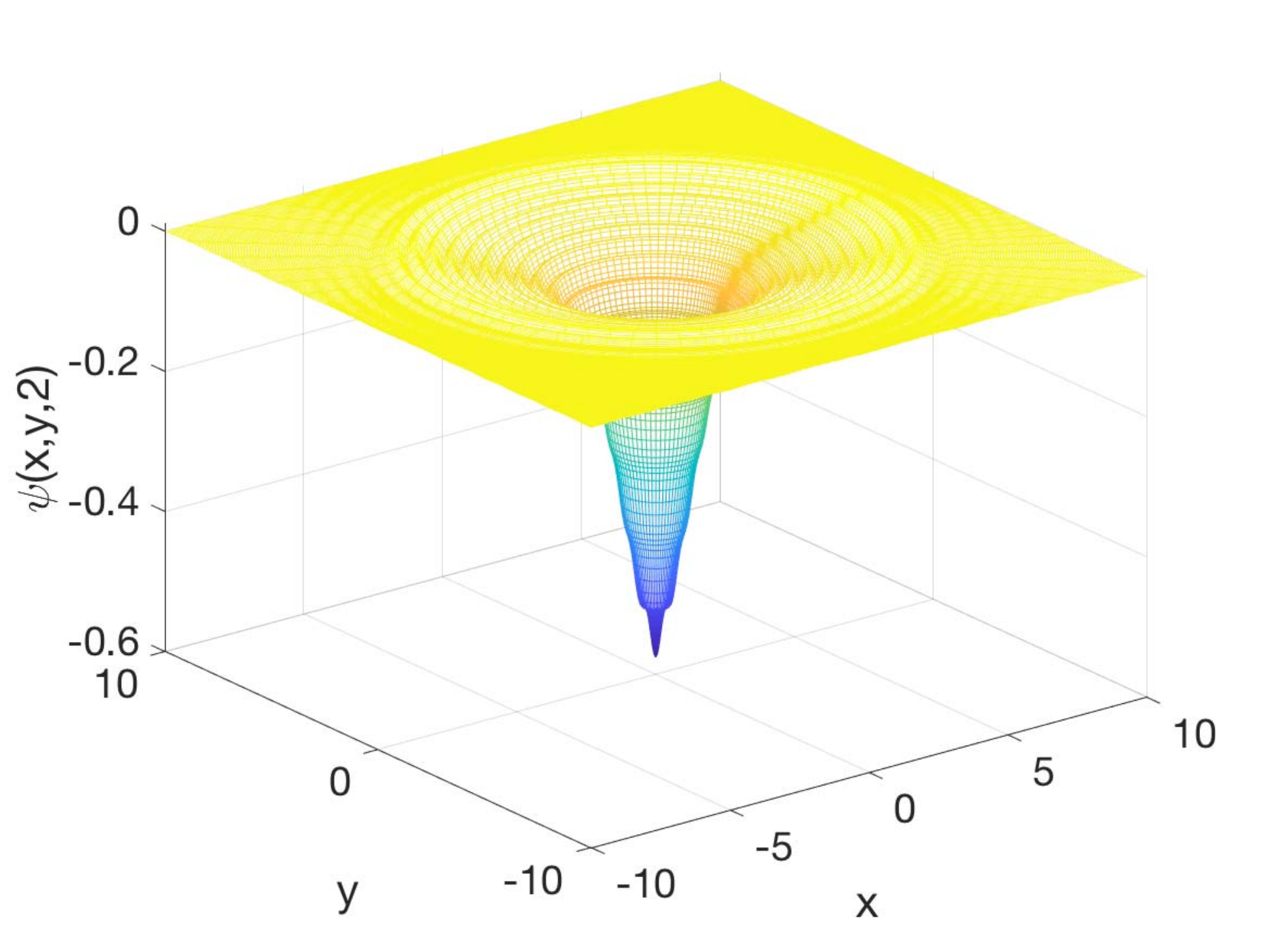}}
\end{minipage}}\hspace{-24pt}
\subfigure[$s=1$ and $\mu=0.3$]{
\begin{minipage}[t]{0.28\textwidth}
\centering
\rotatebox[origin=cc]{-0}{\includegraphics[width=0.9\textwidth]{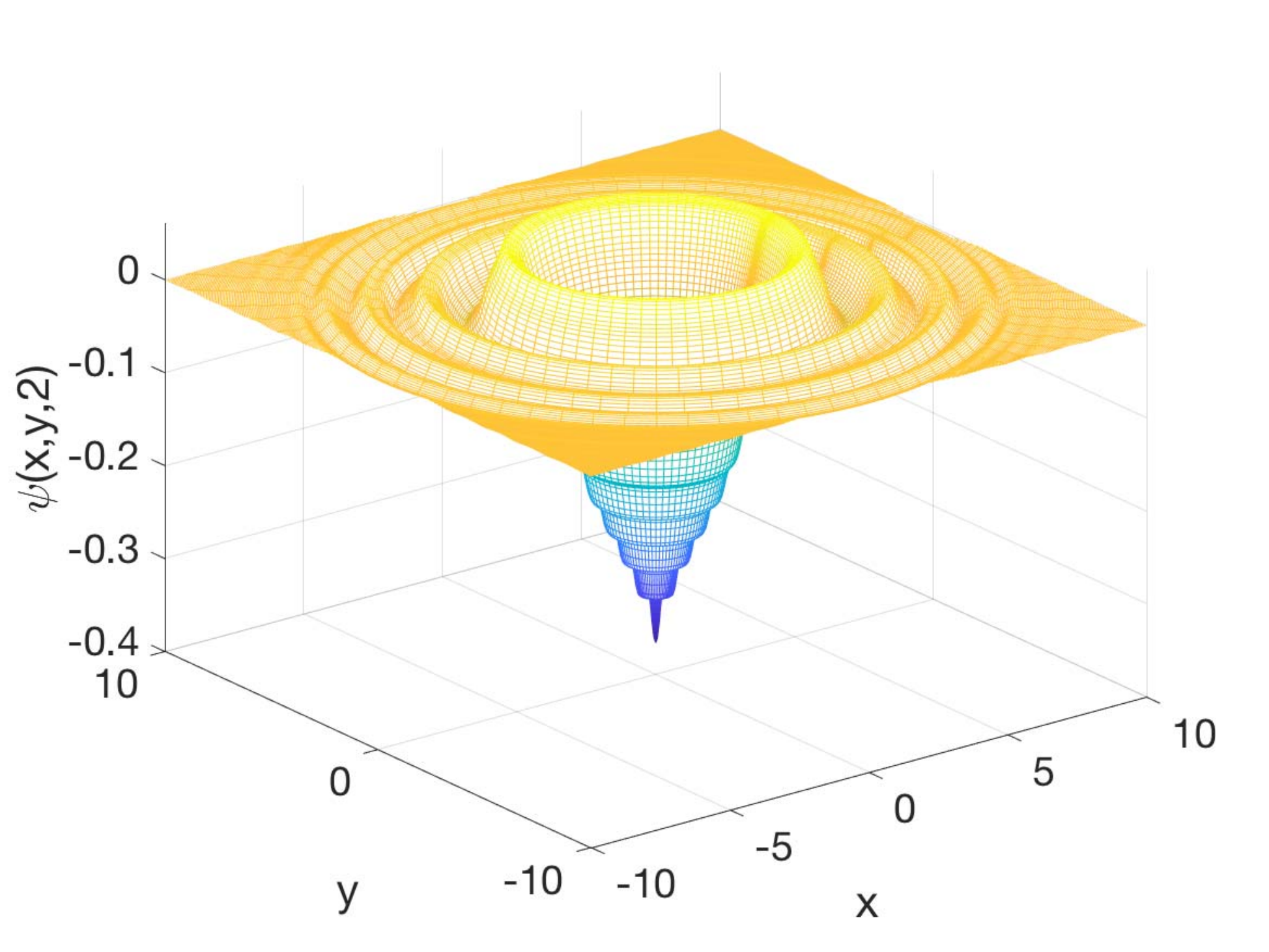}}
\end{minipage}}\hspace{-24pt}
\subfigure[$s=1$ and $\mu=-0.3$]{
\begin{minipage}[t]{0.28\textwidth}
\centering
\rotatebox[origin=cc]{-0}{\includegraphics[width=0.9\textwidth]{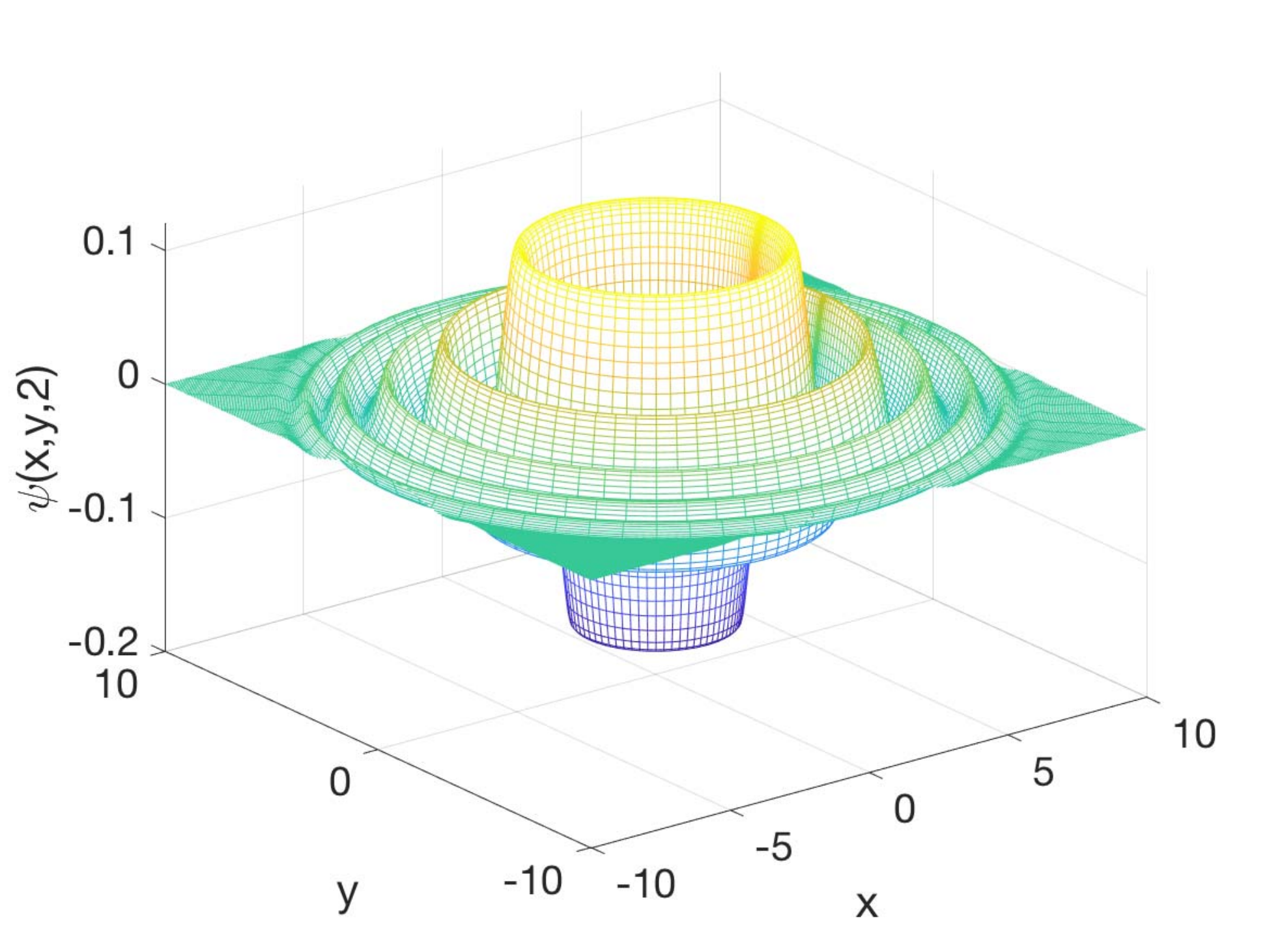}}
\end{minipage}}\vskip -5pt

\hspace{-15pt}
\subfigure[$s=0.7$ and $\mu=1$]{
\begin{minipage}[t]{0.28\textwidth}
\centering
\rotatebox[origin=cc]{-0}{\includegraphics[width=0.9\textwidth]{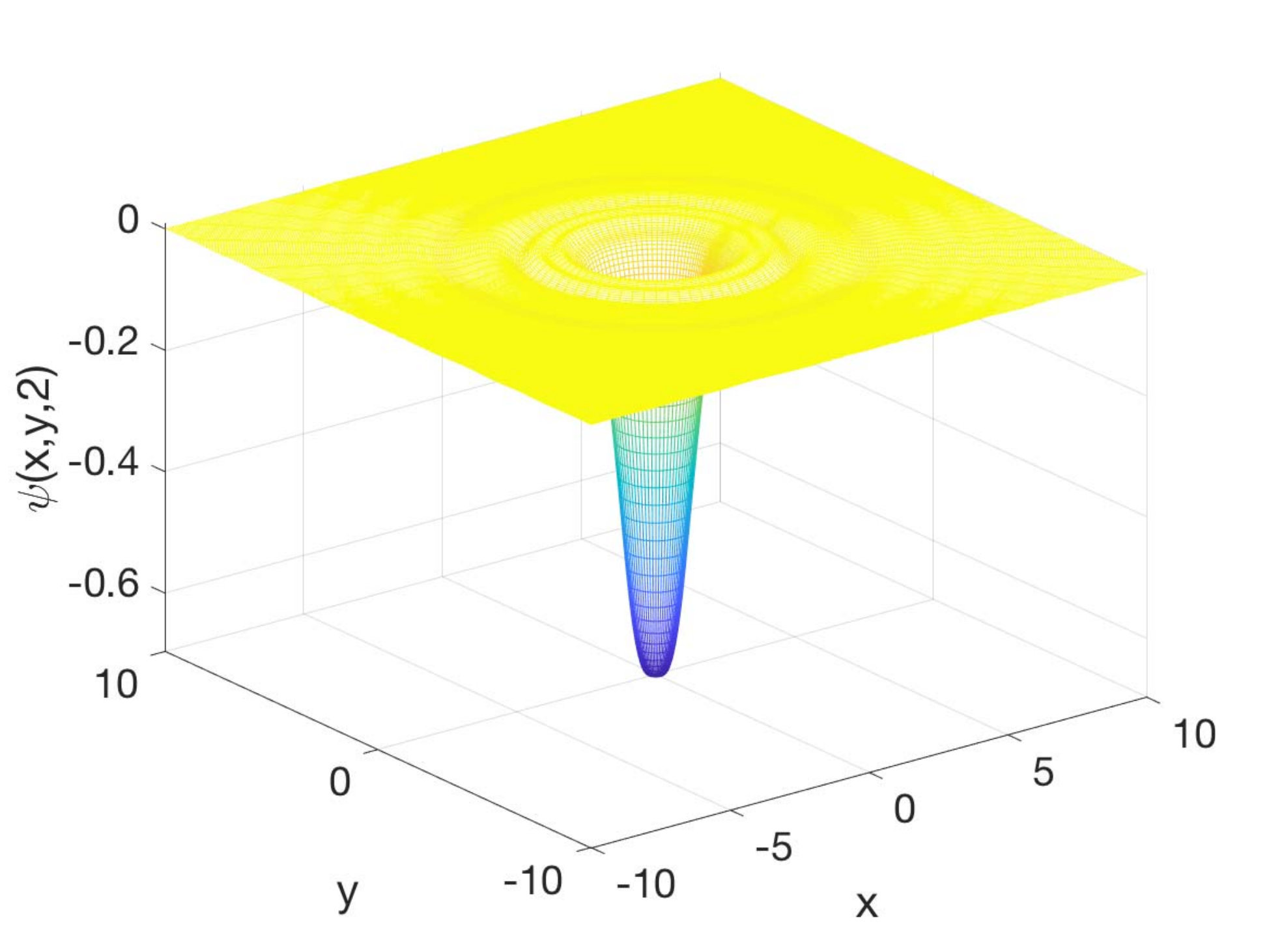}}
\end{minipage}}\hspace{-24pt}
\subfigure[$s=0.7$ and $\mu=0.7$]{
\begin{minipage}[t]{0.28\textwidth}
\centering
\rotatebox[origin=cc]{-0}{\includegraphics[width=0.9\textwidth]{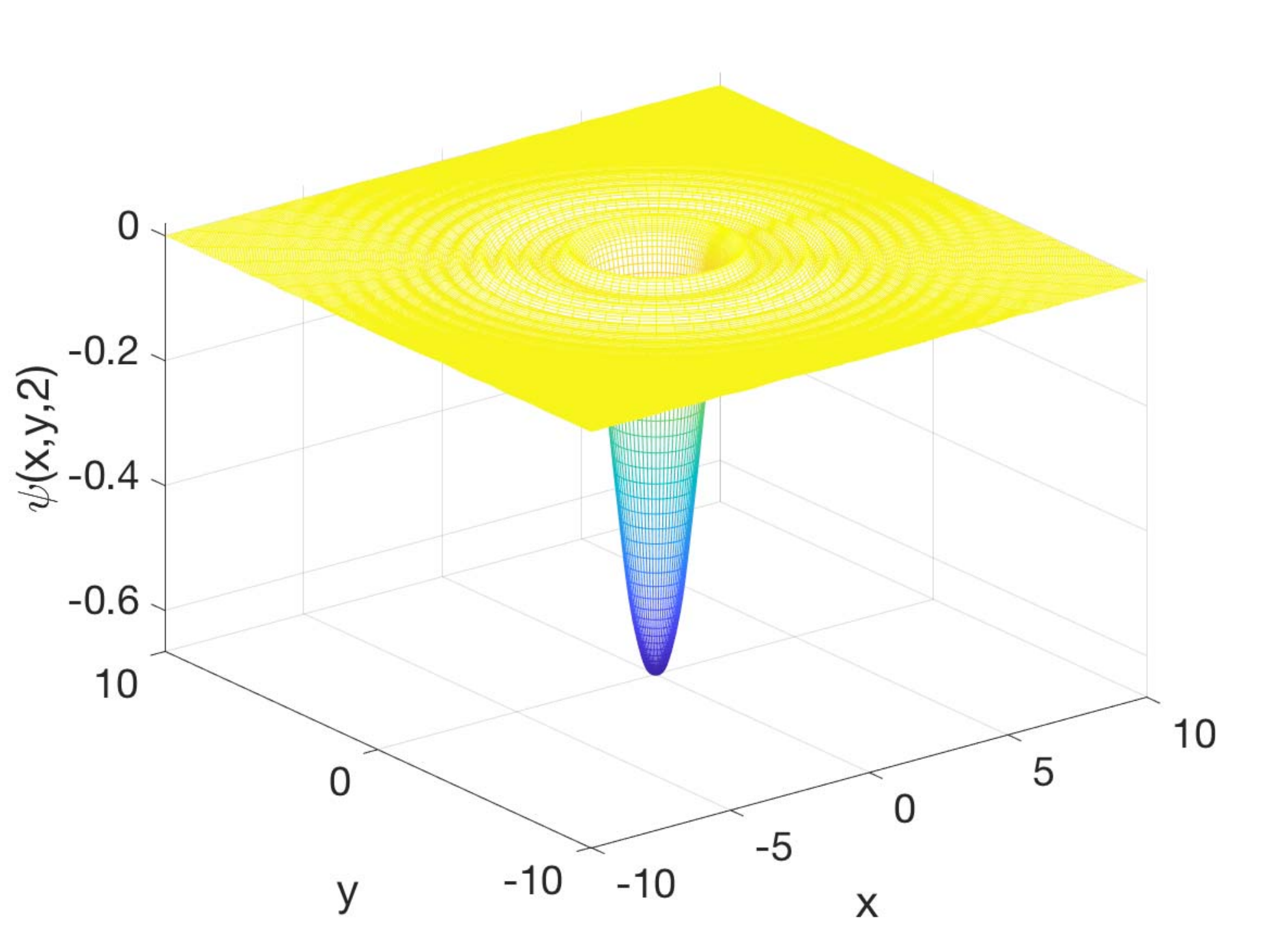}}
\end{minipage}} \hspace{-24pt}
\subfigure[$s=0.7$ and $\mu=0.3$]{
\begin{minipage}[t]{0.28\textwidth}
\centering
\rotatebox[origin=cc]{-0}{\includegraphics[width=0.9\textwidth]{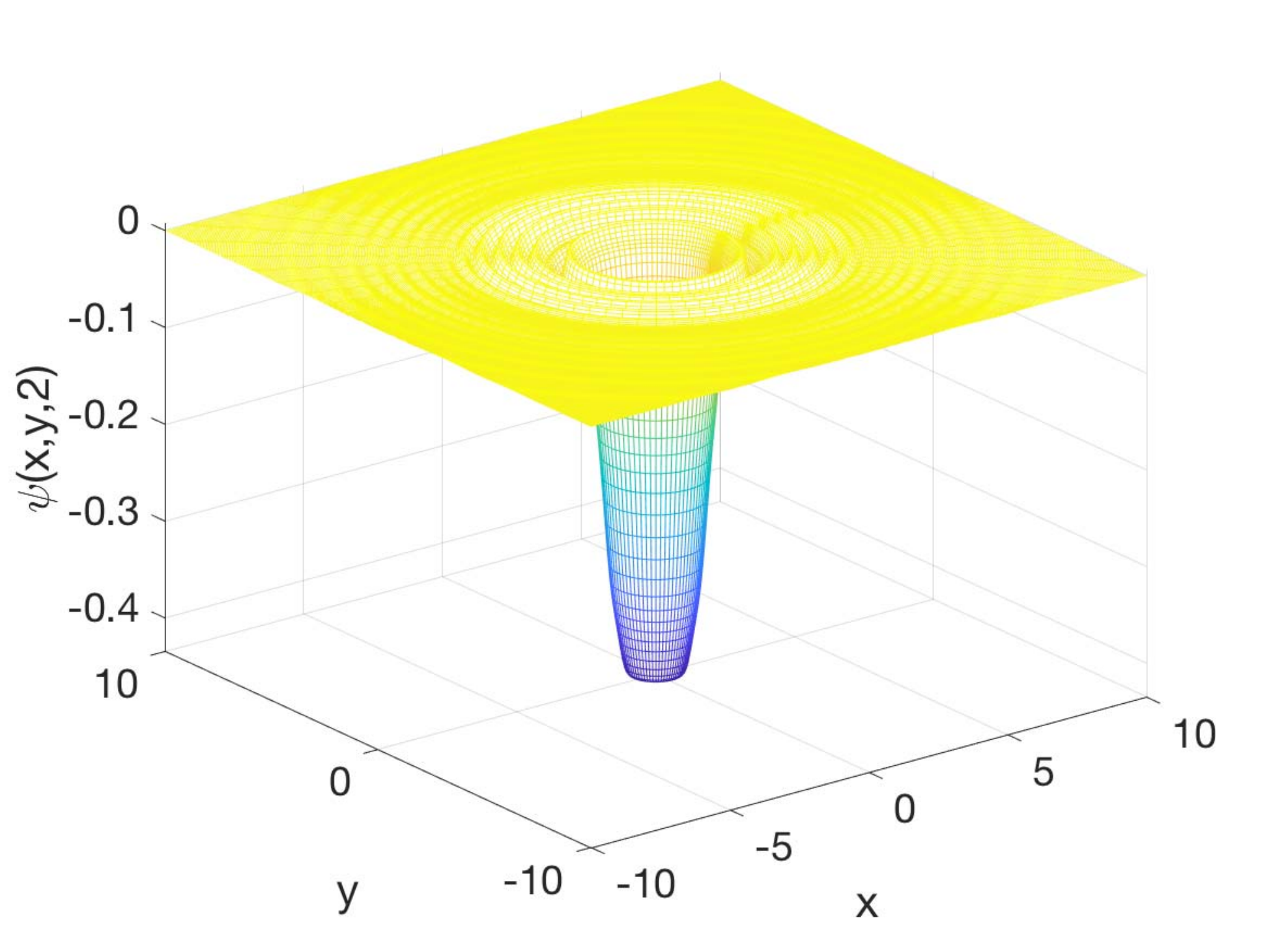}}
\end{minipage}} \hspace{-24pt}
\subfigure[$s=0.7$ and $\mu=-0.3$]{
\begin{minipage}[t]{0.28\textwidth}
\centering
\rotatebox[origin=cc]{-0}{\includegraphics[width=0.9\textwidth]{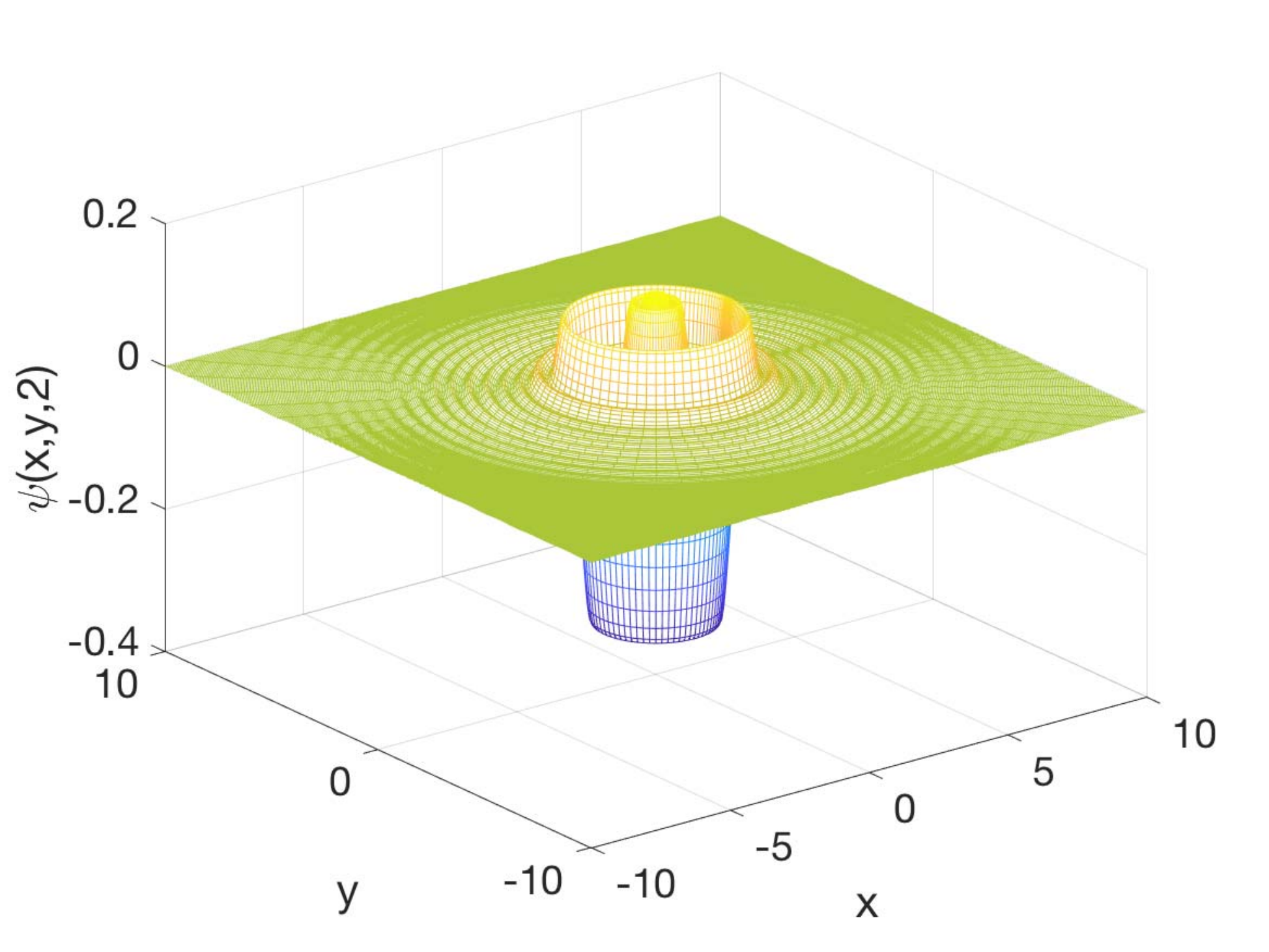}}
\end{minipage}} 

\hspace{-15pt}
\subfigure[$s=0.3$ and $\mu=1$]{
\begin{minipage}[t]{0.28\textwidth}
\centering
\rotatebox[origin=cc]{-0}{\includegraphics[width=0.9\textwidth]{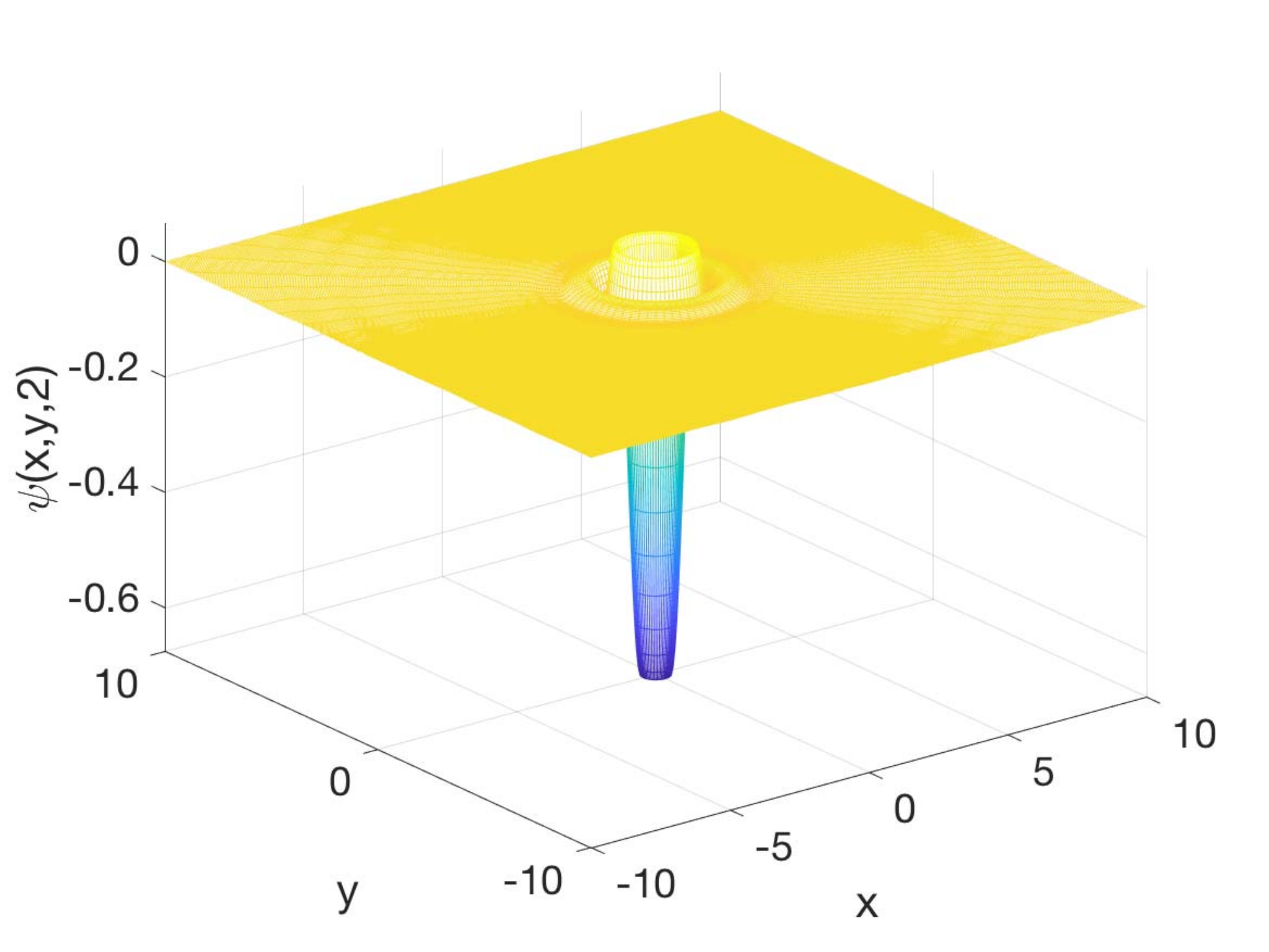}}
\end{minipage}}\hspace{-24pt}
\subfigure[$s=0.3$ and $\mu=0.7$]{
\begin{minipage}[t]{0.28\textwidth}
\centering
\rotatebox[origin=cc]{-0}{\includegraphics[width=0.9\textwidth]{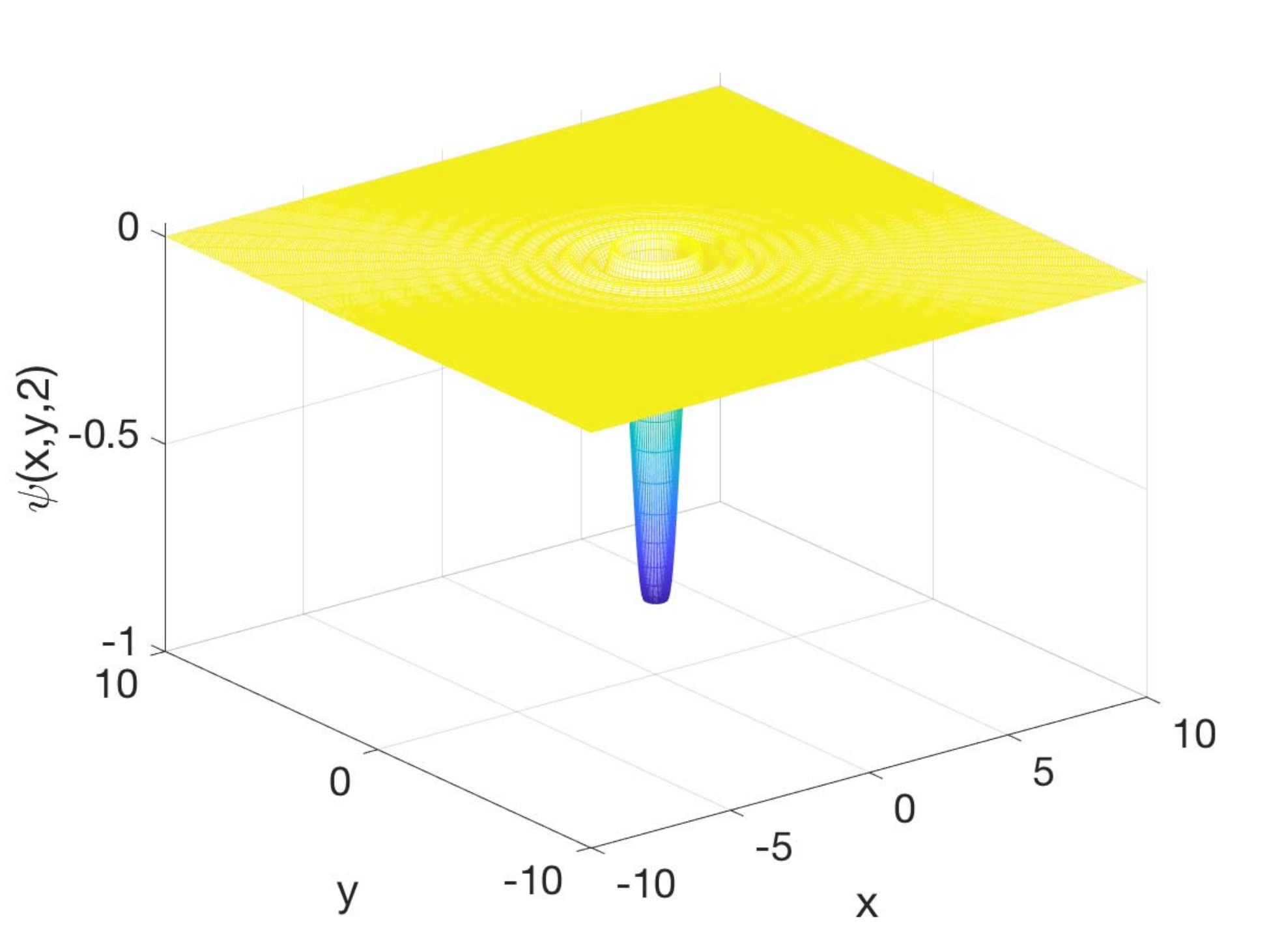}}
\end{minipage}} \hspace{-24pt}
\subfigure[$s=0.3$ and $\mu=0.3$]{
\begin{minipage}[t]{0.28\textwidth}
\centering
\rotatebox[origin=cc]{-0}{\includegraphics[width=0.9\textwidth]{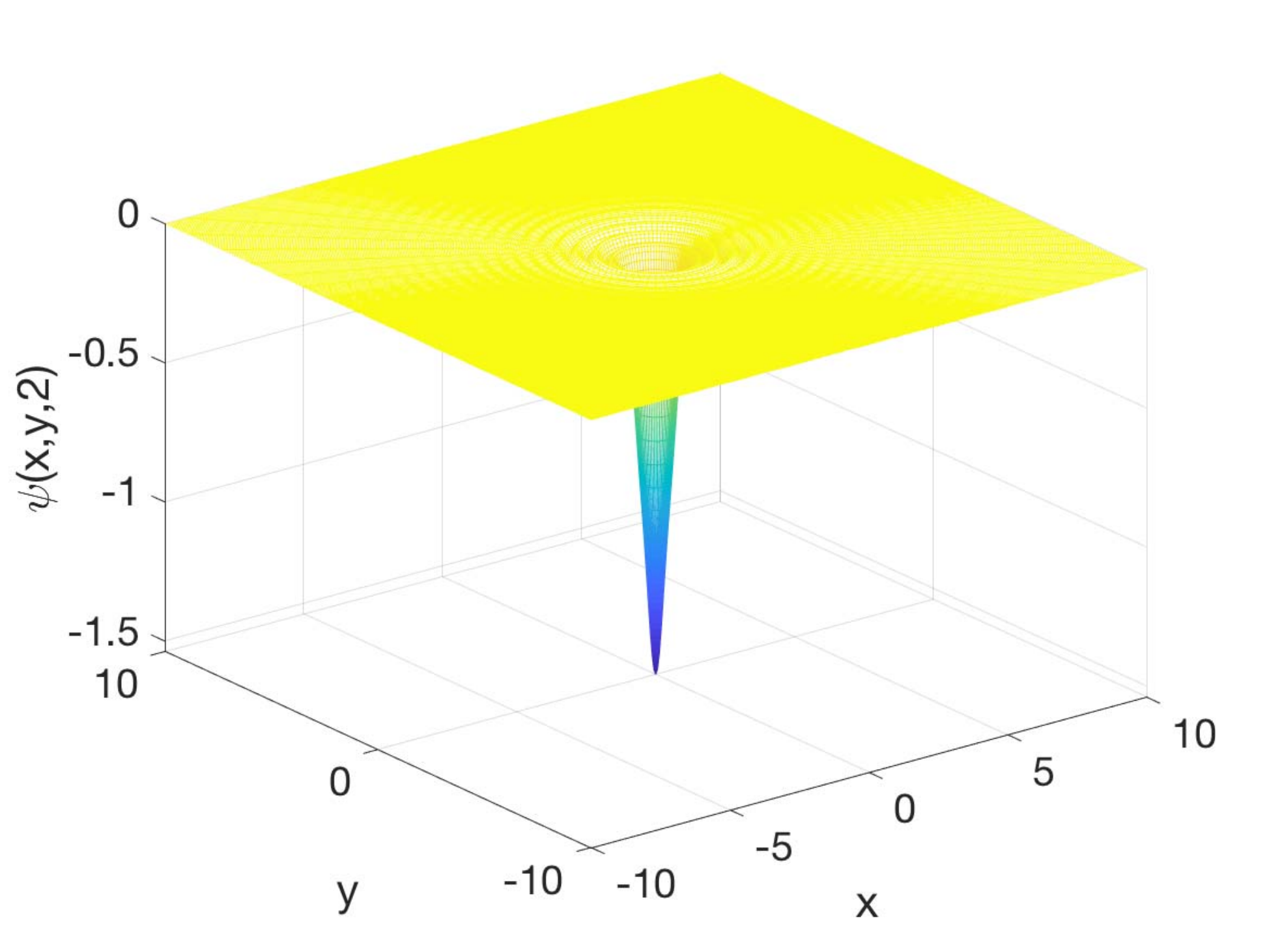}}
\end{minipage}} \hspace{-24pt}
\subfigure[$s=0.3$ and $\mu=-0.3$]{
\begin{minipage}[t]{0.28\textwidth}
\centering
\rotatebox[origin=cc]{-0}{\includegraphics[width=0.9\textwidth]{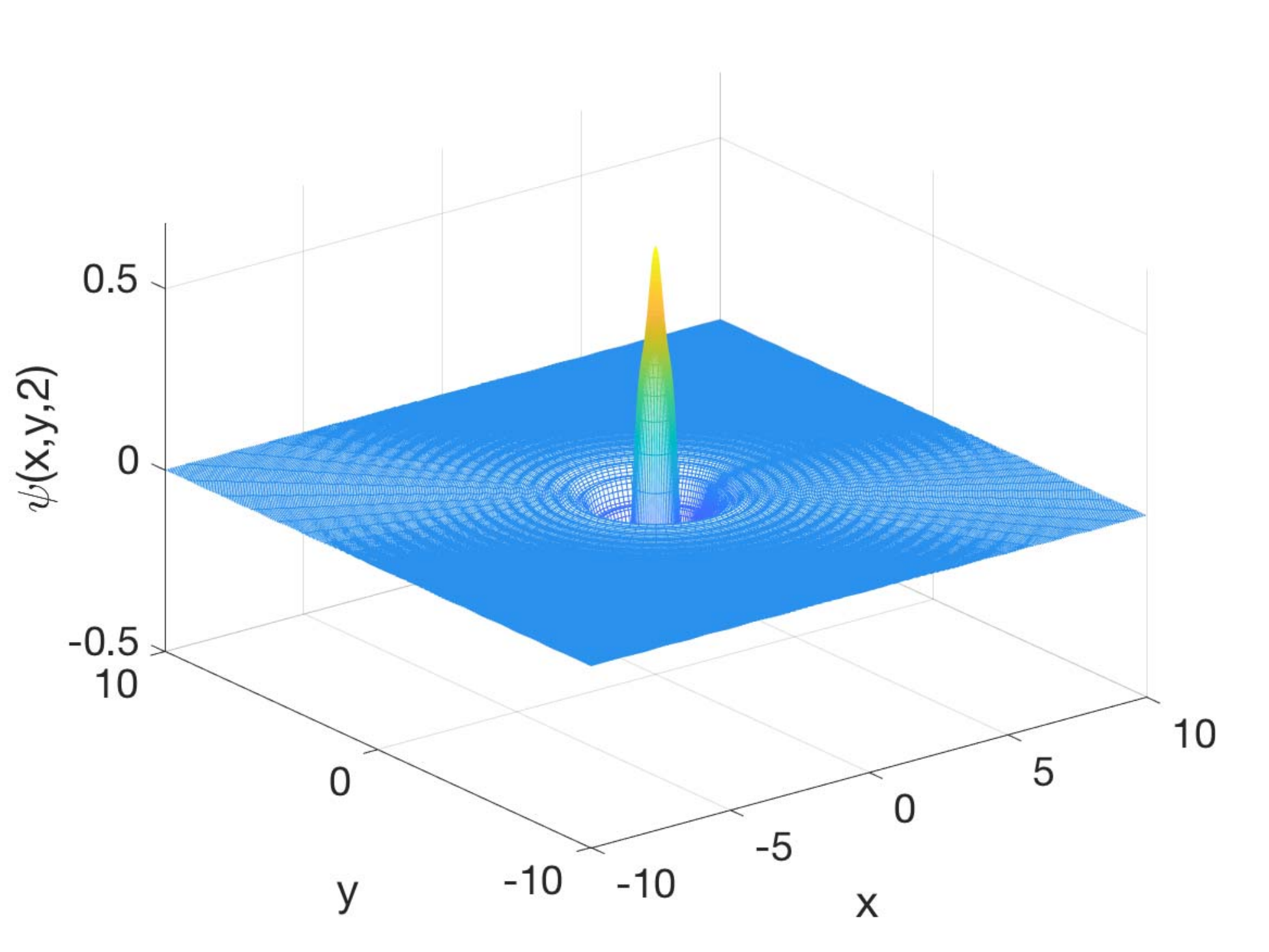}}
\end{minipage}} 
\caption
{\small  The profiles of numerical solution at $t=2$ with different $s,\mu$. }\label{Example3}
\end{figure}

\section{M\"{u}ntz-type GHFs with applications to  Schr\"{o}dinger eigenvalue problems 
}\label{sect6Schro}
\setcounter{equation}{0} \setcounter{lem}{0} \setcounter{thm}{0} \setcounter{cor}{0}\setcounter{remark}{0} 
 In this section, we introduce the second family of generalised Hermite functions for efficient and spectrally accurate solutions of 
 the Schr\"{o}dinger eigenvalue problem: 
 \begin{equation}
\label{fNL}
\begin{cases}
\big[\! -\frac12\Delta+ V(\bx)\big]u(\bx) =\lambda u(\bx) \quad &\text{in}\;\; \RR^d,\\[6pt]
u(\bx)\to 0\quad &\text{as}\;\; |\bx|\rightarrow \infty,
\end{cases}
\end{equation} 
where the potential function $V(\bx)=Z|\bx|^{2\alpha}$ with  $\alpha, Z$ being given constants.  It is known that (i) if $\alpha>-1$, 
 all eigenvalues of \eqref{fNL} are distinct;  (ii) if  $\alpha=-1$ or $Z=0$, the spectrum of the Schr\"odinger  operator $-\frac12 \Delta+\frac{Z}{|\bx|^2}$  
  is a continuous one (cf. \cite{zuazo2001fourier}).

The variational form of \eqref{fNL} is to find $\lambda\in \RR$ and $u\in H^1(\RR^d)\setminus \{0\}$ such that
\begin{equation}\label{weakg} 
 \mathcal{B}(u,v):=\frac12(\nabla u, \nabla v)_{\RR^d}+Z(|\bx|^{2\alpha}u, v)_{\RR^d} =\lambda(u, v)_{\RR^d}, \quad\forall\, v \in H^{1}(\RR^d).
\end{equation}
As shown  in Theorem \ref{rddiffrecur2}, the Hermite functions $\{\widehat{H}_{k, \ell}^{0,n}(\bx)\}$ are the eigenfunctions of the  
Schr\"{o}dinger operator: $-\Delta +|\bs x|^2.$ Here, we intend to explore similar properties for the more general operator
by introducing the  
   M\"untz-type  Hermite functions, and construct efficient and spectrally accurate spectral approximation to \eqref{weakg}.

\subsection{M\"untz-type generalised Hermite functions}  To solve \eqref{weakg} accurately and efficiently,   we introduce  the following  M-GHFs that are orthogonal in the sense of \eqref{orthnew1} below. 
\begin{definition}\label{theorem7.1}
{\em For $\theta> 0, (\ell,n)\in \Upsilon_\infty^d$ and $k\in \mathbb N_0,$ the M\"untz-type GHFs are defined by
\begin{equation}\label{ghfrdx}
\widehat {\mathcal H}^{\theta,n}_{k,\ell}(\bx) =c^{\theta,d}_{k,n}\, L^{(\beta_{n})}_{k} (|\bx|^{2\theta} ) \e^{-\frac{|\bx|^{2\theta}}2} Y^n_{\ell}(\bx), \quad \bx\in \mathbb R^d,
\end{equation}
where 
$$c^{\theta,d}_{k,n}=\sqrt{\frac{2\,k!}{ \Gamma(k+\beta_n+1)}},
\quad \beta_n=\beta^{\theta,d}_{n}=\frac{n+d/2-1}{\theta}.$$}
\end{definition}
 It is seen from \eqref{LagFun01} and \eqref{ghfrdx}  that if $\theta=1,$ it reduces the GHFs 
 $\widehat H^{0,n}_{k,\ell}(\bx)$,  i.e., $\widehat {\mathcal H}^{1,n}_{k,\ell}(\bx)= \widehat H^{0,n}_{k,\ell}(\bx)$.
The so-defined  M\"untz-type GHFs enjoy the following remarkable properties, which are key to the success of  the spectral algorithm for \eqref{weakg}.
\begin{theorem}\label{thmorth} For $\theta>\max(1-d/2,0)$, $(\ell,n),(\iota,m)\in \Upsilon^d_\infty$ and $k,j\in \mathbb{N}_0$, we have 
\begin{equation}\label{orthsch}
\big[\!-\Delta+\theta^2 |\bx|^{4\theta-2}\big]\widehat {\mathcal H}^{\theta,n}_{k,\ell}(\bx)=  2\theta^2 (\beta_{n}+2k+1)\, |\bx|^{2\theta-2}\widehat {\mathcal H}^{\theta,n}_{k,\ell}(\bx),
\end{equation}
and the orthogonality 
\begin{align} \label{orthnew1}
\begin{split}
\big(  \nabla\,   \widehat {\mathcal H}^{\theta,n}_{k,\ell},  \nabla\,  \widehat {\mathcal H}^{\theta,m}_{j,\iota} \big)_{\RR^d}
+\theta^2  \big(|\bx|^{4\theta-2}\widehat {\mathcal H}^{\theta,n}_{k,\ell}, \widehat {\mathcal H}^{\theta,m}_{j,\iota} \big)_{ \RR^d}
 =2\,\theta\,\big(\beta_n+2k+1\big)\delta_{j k} \delta_{mn} \delta_{\ell\iota} .
 \end{split}
 \end{align}
 \end{theorem}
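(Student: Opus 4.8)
The plan is to reduce everything to a one-dimensional radial identity for the generalised Laguerre polynomials, exactly in the spirit of the proof of Theorem \ref{rddiffrecur2}, and then to obtain the orthogonality \eqref{orthnew1} by combining the eigen-identity \eqref{orthsch} with an integration by parts. First I would write $\bx = r\hat\bx$, use $Y^n_\ell(\bx) = r^n Y^n_\ell(\hat\bx)$ together with the spherical form of the Laplacian \eqref{eq:Delta} and the Laplace--Beltrami eigenrelation \eqref{eq:LaplaceBeltrami}, so that the angular part factors out and $-\Delta$ acting on $\widehat{\mathcal H}^{\theta,n}_{k,\ell}$ becomes the radial operator
\begin{equation*}
-\Big[\partial_r^2 + \frac{d-1}{r}\partial_r - \frac{n(n+d-2)}{r^2}\Big]\big[r^n L_k^{(\beta_n)}(r^{2\theta})\e^{-r^{2\theta}/2}\big]\,Y^n_\ell(\hat\bx).
\end{equation*}
The substitution $\rho = r^{2\theta}$ converts this radial ODE into the Laguerre differential equation: one checks that $g(\rho) = L_k^{(\beta_n)}(\rho)\e^{-\rho/2}$ satisfies $\rho g'' + (\beta_n+1)g' + (k + (\beta_n+1)/2 - \rho/4)g = 0$, and after expressing $\partial_r$ in terms of $\partial_\rho$ and dividing through by the factor $r^{2\theta-2}$ one recovers precisely $2\theta^2(\beta_n + 2k+1)|\bx|^{2\theta-2}$ times the original function, plus the term $-\theta^2|\bx|^{4\theta-2}$ brought to the left-hand side. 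The choice $\beta_n = (n+d/2-1)/\theta$ is exactly what makes the $r^{-2}$ singular term, the $r^n$ prefactor, and the Jacobian $r^{d-1}$ all cooperate; verifying this matching is the one genuinely fiddly computation. I would likely quote (or re-derive from) a lemma on the generalised Hermite radial equation analogous to \cite[Lemma 2.1]{2018Ma} used in the proof of Theorem \ref{rddiffrecur2}, adapted to the exponent $2\theta$.

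Next I would establish \eqref{orthnew1}. Pair \eqref{orthsch} against $\widehat{\mathcal H}^{\theta,m}_{j,\iota}$ in $L^2(\RR^d)$ and integrate by parts on the $-\Delta$ term; since the M-GHFs decay like $\e^{-r^{2\theta}/2}$ (and $\theta > \max(1-d/2,0)$ guarantees $\beta_n > -1$, so the functions lie in $H^1(\RR^d)$ with no boundary contribution), this gives
\begin{equation*}
\big(\nabla \widehat{\mathcal H}^{\theta,n}_{k,\ell}, \nabla \widehat{\mathcal H}^{\theta,m}_{j,\iota}\big)_{\RR^d} + \theta^2\big(|\bx|^{4\theta-2}\widehat{\mathcal H}^{\theta,n}_{k,\ell}, \widehat{\mathcal H}^{\theta,m}_{j,\iota}\big)_{\RR^d} = 2\theta^2(\beta_n+2k+1)\big(|\bx|^{2\theta-2}\widehat{\mathcal H}^{\theta,n}_{k,\ell}, \widehat{\mathcal H}^{\theta,m}_{j,\iota}\big)_{\RR^d}.
\end{equation*}
It then remains to compute the weighted inner product on the right. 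Using the angular orthogonality \eqref{Yorth} it collapses to $\delta_{mn}\delta_{\ell\iota}$ times a radial integral $\int_0^\infty L_k^{(\beta_n)}(r^{2\theta})L_j^{(\beta_n)}(r^{2\theta})\e^{-r^{2\theta}} r^{2\theta-2+2n+d-1}\,\mathrm dr$; the substitution $\rho = r^{2\theta}$ turns this into $\frac{1}{2\theta}\int_0^\infty L_k^{(\beta_n)}(\rho)L_j^{(\beta_n)}(\rho)\e^{-\rho}\rho^{\beta_n}\,\mathrm d\rho$ (here the exponent bookkeeping $2\theta - 2 + 2n + d - 1 = 2\theta(\beta_n+1) - 1$ is exactly the one needed), which by the Laguerre orthogonality \eqref{Lagpoly} and the normalisation constant $c^{\theta,d}_{k,n}$ equals $\frac{1}{2\theta}\delta_{jk}$. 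Substituting back gives the factor $2\theta^2(\beta_n+2k+1)\cdot\frac{1}{2\theta} = \theta(\beta_n+2k+1)$; doubling (from the ``$2\theta^2$'' coefficient before the weighted product — one must track the constant carefully) yields $2\theta(\beta_n+2k+1)\delta_{jk}\delta_{mn}\delta_{\ell\iota}$, which is \eqref{orthnew1}.

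The main obstacle I anticipate is purely bookkeeping rather than conceptual: keeping the numerous constants — $c^{\theta,d}_{k,n}$, the powers of $\theta$ coming out of the chain rule $\partial_r = 2\theta r^{2\theta-1}\partial_\rho$ applied twice, the Jacobian exponent, and the shift $\beta_n+1$ — all consistent so that \eqref{orthsch} comes out with coefficient exactly $2\theta^2(\beta_n+2k+1)$ and \eqref{orthnew1} with exactly $2\theta(\beta_n+2k+1)$. A secondary technical point is justifying the integration by parts (no boundary terms at $r=0$ or $r=\infty$): at infinity the Gaussian-type factor $\e^{-r^{2\theta}/2}$ handles it, and at the origin one needs $\beta_n > -1$, i.e. $n + d/2 - 1 > -\theta$, which holds for all $n\ge 0$ precisely because $\theta > \max(1-d/2,0)$; I would state this explicitly so the hypothesis on $\theta$ is seen to be used. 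Everything else follows the template already set by Theorems \ref{thm2.1} and \ref{rddiffrecur2}.
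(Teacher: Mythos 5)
Your proposal is correct and follows essentially the same route as the paper: the eigen-identity \eqref{orthsch} is obtained by separating the angular part via \eqref{eq:Delta}--\eqref{eq:LaplaceBeltrami} and reducing to the radial Laguerre equation (the paper invokes \cite[Lemma 2.1]{2018Ma} after the substitution $\rho=r^{\theta}$, which is a trivial repackaging of your direct ODE check with $\rho=r^{2\theta}$), and \eqref{orthnew1} is then derived exactly as you describe, by pairing \eqref{orthsch} with $\widehat{\mathcal H}^{\theta,m}_{j,\iota}$, integrating by parts, and evaluating the weighted radial integral under $\rho=r^{2\theta}$. The only wobble is your claim that the weighted inner product equals $\tfrac{1}{2\theta}\delta_{jk}$; it is in fact $\tfrac{1}{\theta}\delta_{jk}\delta_{mn}\delta_{\ell\iota}$, the extra factor $2$ coming from $(c^{\theta,d}_{k,n})^2=2\,k!/\Gamma(k+\beta_n+1)$ against the Laguerre norm \eqref{Lagpoly} --- this is precisely the ``doubling'' you invoke, and with it the constant $2\theta(\beta_n+2k+1)$ comes out correctly.
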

 \begin{proof}

 We can derive from \eqref{eq:Delta}, \eqref{eq:LaplaceBeltrami}, \eqref{ghrdiff2}, \eqref{ghfrdx} and the change of variable $\rho=r^{\theta}$ that 
\begin{align*}
&  \big[-\Delta + \theta^2 r^{4\theta-2} \big] 
   \widehat{\mathcal H}^{\theta,n}_{k,\ell}(\bx) 
    \\&=c^{\theta,d}_{k,n}
   \Big( -\frac{1}{r^{d-1}} \partial_r r^{d-1} \partial_r - \frac{1}{r^2} \Delta_{\mathbb{S}^{d-1}}+  \theta^2 r^{4\theta-2} \Big) \, [r^n\,L^{(\beta_n)}_{k} (r^{2\theta} ) \e^{-\frac{r^{2\theta}}2} Y^n_{\ell}(\hat\bx)]
   \\&=c^{\theta,d}_{k,n}
   \Big( -\frac{1}{r^{d-1}} \partial_r r^{d-1} \partial_r + \frac{n(n+d-2)}{r^2} +  \theta^2 r^{4\theta-2} \Big) \, [r^n\,L^{(\beta_n)}_{k} (r^{2\theta} ) \e^{-\frac{r^{2\theta}}2} Y^n_{\ell}(\hat\bx)]
  \\&=   \theta^2  \rho^{2-\frac{2}{\theta}}   \Big( -    \frac{1}{ \rho^{\frac{d+2\theta-2}{\theta}-1 } } \partial_{\rho}  \rho^{\frac{d+2\theta-2}{\theta}-1 } \partial_{\rho}
  +\frac{\frac{n}{\theta} (\frac{n}{\theta} +\frac{d+2\theta-2}{\theta}-2) }{ \rho^2} +  \rho^{2} \Big) \, [\rho^{\frac{n}\theta}\,L^{(\beta_n)}_{k} (\rho^2 ) \e^{-\frac{\rho^2}2} Y^n_{\ell}(\hat\bx)]
  \\&=  \theta^2  \rho^{2-\frac{2}{\theta}}  \Big(4k + \frac{2n}{\theta} +\frac{d+2\theta-2}{\theta} \Big) \, [\rho^{\frac{n}\theta}\,L^{(\beta_n)}_{k} (\rho^2 ) \e^{-\frac{\rho^2}2} Y^n_{\ell}(\hat\bx)]
  \\&= 2\theta^2 \big(\beta_n+2k+1\big)r^{2\theta-2}[r^n\,L^{(\beta_n)}_{k} (r^{2\theta} ) \e^{-\frac{r^{2\theta}}2} Y^n_{\ell}(\hat\bx)]
  \\&= 2\theta^2 \big(\beta_n +2k+1\big) |\bx|^{2\theta-2} \widehat{\mathcal H}^{\theta,n}_{k,\ell}(\bx),
\end{align*}
where we used the identity  derived from \cite[Lemma 2.1]{2018Ma} with $\alpha=\frac{n+d/2-1}{\theta}$ and $\beta=\alpha+\frac{1-d/2}\theta$:
\begin{equation*}
\begin{split}
 \Big[\partial_{\rho}^2+\frac{\frac{d+2\theta-2}{\theta}-1}{\rho} \partial_{\rho}-\frac{\frac{n}{\theta} (\frac{n}{\theta} +\frac{d+2\theta-2}{\theta}-2) }{ \rho^2}-\rho^2+4k + \frac{2n}{\theta} +\frac{d+2\theta-2}{\theta} \Big] \big[\rho^{\frac{n}\theta} L_{k}^{(\beta_n)}(\rho^2)\,\e^{-\frac{\rho^2}{2}} \big]=0.
\end{split}
\end{equation*}
Next, we prove the orthogonality \refe{orthnew1}. By virtue of \eqref{orthsch}, we have from \eqref{ghfrdx} and the change of variable $\rho=r^{2\theta}$ that
\begin{align*}
\big(  \nabla\,\widehat {\mathcal H}^{\theta,n}_{k,\ell}, & \nabla \,\widehat {\mathcal H}^{\theta,m}_{j,\iota} \big)_{\RR^d}
 +\theta^2 \big( |\bx|^{4\theta-2}\, \widehat {\mathcal H}^{\theta,n}_{k,\ell}, \widehat {\mathcal H}^{\theta,m}_{j,\iota} \big)_{\RR^d}
\\
&= 2\theta^2 (\beta_n+2k+1) (c^{\theta,d}_{k,n})^2\delta_{mn}\delta_{\ell\iota}\! \int_0^{\infty} \!\!
r^{2\theta+2n+d-3} L^{(\beta_n)}_{k}(r^{2\theta}) L^{(\beta_n)}_{j}(r^{2\theta})  \e^{ -r^{2\theta}} \d r
\\
& =  \theta  (\beta_n+2k+1) (c^{\theta,d}_{k,n})^2\delta_{mn}\delta_{\ell\iota} \int_0^{\infty} 
\rho^{\beta_n} L^{(\beta_n)}_{k}(\rho) L^{(\beta_n)}_{j}(\rho)  \e^{ -\rho}  \d \rho
\\
&=2\, \theta (\beta_n+2k+1) \, \delta_{mn}\delta_{jk}\delta_{\ell\iota} .
\end{align*} 
This completes the proof.
 \end{proof}
 
%
%

As a  special case of \eqref{orthsch} (i.e.,  $\theta=\frac12$), we can find the explicit representation of   the eigen-pairs of the Schr\"odinger operator with Coulomb potential: 
$-\frac12\Delta- \frac{|Z|}{|\bx|}$ in $d$ dimension, where $Z$ is a nonzero constant.  
 \begin{corollary} 
 \label{corScheig}
 For any $  k\in \NN_0,\,  (\ell, n)\in \Upsilon_{\infty}^d$ and $Z\neq 0$, we have 
 \begin{equation}\label{orthsch2}
\Big[-\frac12\Delta- \frac{|Z|}{|\bx|}\Big] \widehat {\mathcal H}^{\frac12,n}_{k,\ell}\Big(\frac{4|Z|\, \bx}{2n+2k+d-1}\Big)= -\frac{2 Z ^2}{(2n+2k+d-1)^2} \widehat {\mathcal H}^{\frac12,n}_{k,\ell}\Big(\frac{4|Z| \, \bx}{2n+2k+d-1}\Big).
 \end{equation}
 \end{corollary}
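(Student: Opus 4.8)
The plan is to obtain Corollary~\ref{corScheig} as a direct specialisation of Theorem~\ref{thmorth} followed by a single dilation of the argument, so that no new analytic input is needed. First I would set $\theta=\tfrac12$ in \eqref{orthsch}. Then $4\theta-2=0$, $2\theta-2=-1$, $\theta^2=\tfrac14$, and the parameter in \eqref{ghfrdx} becomes $\beta_n=\beta_n^{1/2,d}=2(n+d/2-1)=2n+d-2$, so that $\beta_n+2k+1=2n+2k+d-1$. Hence \eqref{orthsch} collapses to
\begin{equation*}
\Big(\!-\Delta-\frac{2n+2k+d-1}{2|\bx|}\Big)\widehat{\mathcal H}^{\frac12,n}_{k,\ell}(\bx)=-\frac14\,\widehat{\mathcal H}^{\frac12,n}_{k,\ell}(\bx),
\end{equation*}
which is precisely the identity \eqref{orthsch0011} announced in the introduction.

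Next I would rescale. Set $a:=a_{k,n}=\dfrac{4|Z|}{2n+2k+d-1}$, which is well defined and positive since $Z\neq0$ and $2n+2k+d-1\geq1$, and put $u(\bx):=\widehat{\mathcal H}^{\frac12,n}_{k,\ell}(a\bx)$. Using the elementary dilation identities $\Delta_{\bx}\big[f(a\bx)\big]=a^2(\Delta f)(a\bx)$ and $|a\bx|^{-1}=a^{-1}|\bx|^{-1}$, I would evaluate the previous display at the point $a\bx$ and multiply through by $a^2$ to get
\begin{equation*}
-\Delta u(\bx)-\frac{a\,(2n+2k+d-1)}{2|\bx|}\,u(\bx)=-\frac{a^2}{4}\,u(\bx).
\end{equation*}
By the choice of $a$ the coefficient of $|\bx|^{-1}$ equals exactly $2|Z|$; dividing by $2$ turns the differential operator into $-\tfrac12\Delta-|Z|/|\bx|$, and the eigenvalue becomes $-a^2/8=-2Z^2/(2n+2k+d-1)^2$. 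This is exactly \eqref{orthsch2}, with eigenfunction $u(\bx)=\widehat{\mathcal H}^{\frac12,n}_{k,\ell}\big(4|Z|\bx/(2n+2k+d-1)\big)$.

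I do not expect a genuine obstacle: everything is already contained in Theorem~\ref{thmorth}, and the remaining work is just the bookkeeping of one dilation. The only points deserving a word of care are (i) checking that the hypothesis $\theta>\max(1-d/2,0)$ of Theorem~\ref{thmorth} is met at $\theta=\tfrac12$ — which holds for all $d\geq2$, since then $\max(1-d/2,0)=0$, and for $d=1$ in the admissible range of $n$ (so that $\beta_n>-1$) — and (ii) keeping track of the factor $2$ coming from the convention $-\tfrac12\Delta$ in \eqref{fNL} versus the $-\Delta$ appearing in \eqref{orthsch}. One may additionally note in passing that the rescaled functions $u=u_{k,\ell}^n$ are, up to normalisation, the familiar hydrogen-like wave functions, and that the orthogonality \eqref{orthnew1} specialises at $\theta=\tfrac12$ to a corresponding $H^1$-type orthogonality statement for these functions.
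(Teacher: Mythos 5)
Your proposal is correct and follows essentially the same route as the paper: specialise Theorem \ref{thmorth} to $\theta=\tfrac12$ to obtain \eqref{orthsch0011}, then apply the dilation $\bx\mapsto 4|Z|\bx/(2n+2k+d-1)$; you merely spell out the scaling bookkeeping that the paper's proof declares ``immediate.'' Your side remark about the hypothesis $\theta>\max(1-d/2,0)$ being borderline when $d=1$ is a fair observation that the paper itself glosses over, but it does not affect the argument for $d\ge 2$.
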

 \begin{proof}   Taking $\theta=\frac12$ in \eqref{orthsch} and rearranging the terms,  leads to 
 \begin{equation*}
\Big[\!-\Delta-\frac{\beta_{n}+2k+1} {2 |\bx|} \Big]\widehat {\mathcal H}^{\frac12,n}_{k,\ell}(\bx)=  -\frac 1 4  \widehat {\mathcal H}^{\frac12,n}_{k,\ell}(\bx).
\end{equation*}
With a rescaling in  $r$ direction 
$$\bx \to \frac{4|Z|\, \bx}{\beta_n+2k+1}= \frac{4|Z|\, \bx}{2n+2k+d-1},$$ 
we can obtain  \eqref{orthsch2} immediately. 
 \end{proof}
 
The identity in  Corollary \ref{corScheig} implies  that the spectra of the Schr\"odinger operator with Coulomb potential are given by 
\begin{equation}\label{eigenpairC}
\big\{\lambda_i,  u^{n}_{i,\ell}\big\} := \bigg\{-\frac{2Z^2}{(2i+d-3)^2},\  \widehat{\mathcal{H}}^{\frac12,n}_{i-n-1,\ell} \Big( \frac{4|Z|\, \bx}{2i+d-3} \Big)\bigg\},
 \quad  (\ell, n)\in \Upsilon_{i-1}^d,\  i\in \NN,
\end{equation}
and the multiplicity of each $\lambda_i$ is 
\begin{align*}
m_i^d:=a_0^d + a_1^d +\dots + a_{i-1}^d =\frac{(i-1)_{d-1}+(i)_{d-1}}{(d-1)!}, \quad d\ge 2,
\end{align*}
where we recall that $a_i^d$ (defined in \eqref{cpCp}) is the cardinality of  $\Upsilon_{i}^d\setminus\Upsilon_{i-1}^d$ (defined in \eqref{UpsD}). 

  \begin{remark}\label{Resear} {\em The spectrum of the  Schr\"odinger operator with Coulomb potential is of much interest in quantum mechanics and mathematical physics.  For example, one can find the 
  spectrum expressions in e.g., \cite[P. 132]{Pauling1935} and \cite[Thm.\! 10.10]{Gerald2014} for $d=3$ with a different derivation,  and
  the recent work  \cite{Osherov2017} for the asymptotic study of the eigenfunctions. \qed}
  \end{remark}

Although the orthogonality \eqref{orthnew1} does not imply the orthogonality of each individual term, the stiffness and mass matrices are sparse with finite bandwidth. 
\begin{theorem}\label{stiffnessmass} For $\theta>\max(1-d/2,0)$, $(\ell,n),(\iota,m)\in \Upsilon^d_\infty$ and $k,j\in \mathbb{N}_0$, we have
\label{matrixSch}
 \begin{align}
  \label{orthnew3}
&\big(\nabla   \widehat {\mathcal H}^{\theta,n}_{k,\ell}, \nabla  \widehat {\mathcal H}^{\theta,m}_{j,\iota} \big)_{\RR^d}= \theta\,\delta_{mn} \delta_{\ell\iota}\times \begin{cases}
 \beta_n +2k+1,
  &j=k,\\
\sqrt{(k+1)\big(\beta_n +k+1\big)}, &j=k+1,\\
 \sqrt{(j+1)\big(\beta_n +j+1\big)}, &k=j+1,\\
  0, &\text{otherwise},
 \end{cases}
\end{align}
and for  $n+d/2+\alpha>0$,
 \begin{align}
 \label{orthnew2}
\begin{split}
& \big(|\bx|^{2\alpha}\widehat {\mathcal H}^{\theta,n}_{k,\ell},\widehat {\mathcal H}^{\theta,m}_{j,\iota} \big)_{\RR^d}=\frac{1}{2\theta}\, c^{\theta,d}_{k,n}\,c^{\theta,d}_{j,n}\,\delta_{mn}\,\delta_{\ell\iota}
\\
&\qquad\qquad \times \sum_{p=0}^{\min(k,j)}\frac{\Gamma(k-p+1-\frac{1+\alpha}{\theta})\Gamma(j-p+1-\frac{1+\alpha}{\theta})\Gamma(p+\beta_n+\frac{1+\alpha}{\theta})}{\Gamma^2(1-\frac{1+\alpha}{\theta})\, (k-p)!\,(j-p)!\,p!}.
\end{split}
\end{align}
\end{theorem}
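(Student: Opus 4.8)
The plan is to reduce both matrix formulas to one–dimensional generalized Laguerre integrals by separating variables in spherical coordinates, and then to evaluate those integrals with two standard devices: the Laguerre connection relation \eqref{huandi} for \eqref{orthnew2}, and the three–term recurrence \eqref{eq:Lagthreeterm} together with the variational identity \eqref{orthnew1} of Theorem \ref{thmorth} for \eqref{orthnew3}. First I would carry out the radial reduction common to both parts: inserting the definition \eqref{ghfrdx}, using the spherical decomposition \eqref{d_sphere}--\eqref{coord1} and the orthonormality \eqref{Yorth} of the spherical harmonics forces $m=n$, $\iota=\ell$ (hence the Kronecker factors) and leaves
\[
\big(|\bx|^{2\alpha}\widehat{\mathcal H}^{\theta,n}_{k,\ell},\widehat{\mathcal H}^{\theta,m}_{j,\iota}\big)_{\RR^d}=\delta_{mn}\delta_{\ell\iota}\,c^{\theta,d}_{k,n}c^{\theta,d}_{j,n}\int_0^\infty r^{2\alpha+2n+d-1}L_k^{(\beta_n)}(r^{2\theta})L_j^{(\beta_n)}(r^{2\theta})\e^{-r^{2\theta}}\,\d r.
\]
The substitution $\rho=r^{2\theta}$ turns the radial integral into $\frac{1}{2\theta}\int_0^\infty\rho^{\beta_n+\sigma-1}L_k^{(\beta_n)}(\rho)L_j^{(\beta_n)}(\rho)\e^{-\rho}\,\d\rho$ with $\sigma:=(1+\alpha)/\theta$, and the hypothesis $n+d/2+\alpha>0$ is exactly $\beta_n+\sigma-1>-1$, which is what makes this integral converge.

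For \eqref{orthnew2} I would then use \eqref{huandi} in the specialization $\mu=\beta_n+\sigma-1$, $\beta=-\sigma$, i.e.
\[
L_k^{(\beta_n)}(\rho)=\sum_{p=0}^k\frac{\Gamma(k-p+1-\sigma)}{\Gamma(1-\sigma)\,(k-p)!}\,L_p^{(\beta_n+\sigma-1)}(\rho),
\]
expand both $L_k^{(\beta_n)}$ and $L_j^{(\beta_n)}$ in this way, and integrate against the weight $\rho^{\beta_n+\sigma-1}\e^{-\rho}$; the orthogonality \eqref{Lagpoly} kills all off–diagonal products and leaves
\[
\int_0^\infty\rho^{\beta_n+\sigma-1}L_k^{(\beta_n)}L_j^{(\beta_n)}\e^{-\rho}\,\d\rho=\sum_{p=0}^{\min(k,j)}\frac{\Gamma(k-p+1-\sigma)\Gamma(j-p+1-\sigma)\Gamma(p+\beta_n+\sigma)}{\Gamma^2(1-\sigma)\,(k-p)!\,(j-p)!\,p!}.
\]
Substituting $\sigma=(1+\alpha)/\theta$ and restoring the prefactor $\frac{1}{2\theta}c^{\theta,d}_{k,n}c^{\theta,d}_{j,n}$ gives \eqref{orthnew2}.

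For \eqref{orthnew3} the cleanest route is to avoid the singular specialization of \eqref{orthnew2}. Rearranging the variational identity \eqref{orthnew1} of Theorem \ref{thmorth},
\[
\big(\nabla\widehat{\mathcal H}^{\theta,n}_{k,\ell},\nabla\widehat{\mathcal H}^{\theta,m}_{j,\iota}\big)_{\RR^d}=2\theta(\beta_n+2k+1)\delta_{jk}\delta_{mn}\delta_{\ell\iota}-\theta^2\big(|\bx|^{4\theta-2}\widehat{\mathcal H}^{\theta,n}_{k,\ell},\widehat{\mathcal H}^{\theta,m}_{j,\iota}\big)_{\RR^d},
\]
so it remains to compute the $|\bx|^{4\theta-2}$–weighted mass matrix, which is the $\sigma=2$ case of Step 1 with radial integral $\frac{1}{2\theta}\int_0^\infty\rho^{\beta_n+1}L_k^{(\beta_n)}(\rho)L_j^{(\beta_n)}(\rho)\e^{-\rho}\,\d\rho$. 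I would evaluate this directly by writing the recurrence \eqref{eq:Lagthreeterm} as $\rho L_k^{(\beta_n)}(\rho)=(2k+\beta_n+1)L_k^{(\beta_n)}(\rho)-(k+1)L_{k+1}^{(\beta_n)}(\rho)-(k+\beta_n)L_{k-1}^{(\beta_n)}(\rho)$ and then applying \eqref{Lagpoly}; this yields a tridiagonal matrix in $(k,j)$, and after inserting $c^{\theta,d}_{k,n}=\sqrt{2\,k!/\Gamma(k+\beta_n+1)}$ and simplifying the Gamma ratios one obtains diagonal entries $(\beta_n+2k+1)/\theta$ and super/sub-diagonal entries $-\sqrt{(k+1)(\beta_n+k+1)}/\theta$ (resp. $-\sqrt{k(\beta_n+k)}/\theta$). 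Plugging this into the displayed identity and noting that the first term only contributes on the diagonal produces exactly \eqref{orthnew3}; its symmetry is automatic since the diagonal term is supported on $j=k$ and the mass matrix is symmetric.

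I expect the only genuine difficulty to be bookkeeping rather than conceptual: tracking the Pochhammer/Gamma ratios through the connection formula and the final normalization, and — in \eqref{orthnew2} — interpreting the coefficients when $\sigma=(1+\alpha)/\theta$ is a positive integer $\ge 2$. In that case the weight $\rho^{\beta_n+\sigma-1}\e^{-\rho}$ is still integrable, but $\Gamma(1-\sigma)$ sits at a pole, so the stated finite sum must be read as the limiting ratio of residues (equivalently, by analytic continuation in $\alpha$ of both sides, which are meromorphic in $\alpha$ for fixed $k,j,n,\theta,d$). This is precisely why I would establish \eqref{orthnew3} through the recurrence argument of Step 3 rather than by substituting $\sigma=2$ into \eqref{orthnew2}.
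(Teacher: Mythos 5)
Your proposal is correct and follows essentially the same route as the paper: the same spherical reduction and substitution $\rho=r^{2\theta}$, the same use of the connection formula \eqref{huandi} plus the orthogonality \eqref{Lagpoly} to collapse the double sum for \eqref{orthnew2}, and the same strategy of obtaining \eqref{orthnew3} by combining \eqref{orthnew1} with the tridiagonal $|\bx|^{4\theta-2}$-weighted matrix. The only (welcome) difference is that you make explicit two points the paper leaves terse: the evaluation of the $\rho^{\beta_n+1}$-weighted integral via the three-term recurrence \eqref{eq:Lagthreeterm}, and the fact that specializing \eqref{orthnew2} at integer $(1+\alpha)/\theta$ requires the limiting convention $\Gamma(z)=0$ at nonpositive integers, which the paper only mentions in a closing remark.
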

\begin{proof}  In view of  the definition \eqref{ghfrdx},    we derive from  \eqref{Yorth},  \eqref{Lagpoly},  \eqref{huandi} and  the change of variable $\rho=r^{2\theta}$, we derive
 \begin{equation}\label{AAadd}
 \begin{split}
    \big(|\bx|^{2\alpha}& \widehat {\mathcal H}^{\theta,n}_{k,\ell},\widehat {\mathcal H}^{\theta,m}_{j,\iota}   \big)_{\RR^d}
=c^{\theta,d}_{k,n}c^{\theta,d}_{j,n} \delta_{mn}\delta_{\ell\iota} \int_0^{\infty} 
r^{2n+d-1+2\alpha} L^{(\beta_n)}_{k}(r^{2\theta}) L^{(\beta_n)}_{j}(r^{2\theta}) \, \e^{ -r^{2\theta}} \d r
\\
&=   \frac{1}{2\theta}c^{\theta,d}_{k,n}c^{\theta,d}_{j,n}\delta_{mn}\delta_{\ell\iota} \int_0^{\infty} 
\rho^{\frac{n+d/2-1}{\theta}+\frac{\alpha+1-\theta}{\theta}} L^{(\beta_n)}_{k}(\rho) L^{(\beta_n)}_{j}(\rho) \, \e^{ -\rho}  \d \rho
\\
&=  \frac{1}{2\theta} c^{\theta,d}_{k,n}c^{\theta,d}_{j,n} \delta_{mn}\delta_{\ell\iota} \sum_{p=0}^k\sum_{q=0}^j \frac{\Gamma(k-p+\frac{\theta-1-\alpha}{\theta})}{\Gamma(\frac{\theta-1-\alpha}{\theta})(k-p)!}  \frac{\Gamma(j-q+\frac{\theta-1-\alpha}{\theta})}{\Gamma(\frac{\theta-1-\alpha}{\theta})(j-q)!}
\\
&\quad \quad \times \int_0^{\infty} \rho^{\frac{n+d/2+\alpha-\theta}{\theta}} L^{(\frac{n+d/2+\alpha-\theta}{\theta})}_{p}(\rho) L^{(\frac{n+d/2+\alpha-\theta}{\theta})}_{q}(\rho) \, \e^{ -\rho}  \d \rho
\\
&=\frac{c^{\theta,d}_{k,n}\,c^{\theta,d}_{j,n}}{2\theta} \,\delta_{mn}\,\delta_{\ell\iota}\sum_{p=0}^{\min(k,j)}\frac{\Gamma(k-p+\frac{\theta-1-\alpha}{\theta})\Gamma(j-p+\frac{\theta-1-\alpha}{\theta})\Gamma(p+\frac{n+d/2+\alpha}{\theta})}{\Gamma^2(\frac{\theta-1-\alpha}{\theta})\, (k-p)!\,(j-p)!\,p!},
\end{split}
\end{equation}
which gives \eqref{orthnew2}. In particular, if $\alpha=2\theta-1$,  we  derive from \eqref{AAadd}  that 
 \begin{equation}\label{AAadd2}
 \begin{split}
  \big( |\bx|^{4\theta-2}\widehat{\mathcal H}^{\theta,n}_{k,\ell}, \widehat {\mathcal H}^{\theta,m}_{j,\iota}   \big)_{\RR^d}
  &=\frac{1}{2\theta}c^{\theta,d}_{k,n}c^{\theta,d}_{j,n}\delta_{mn}\delta_{\ell\iota} \int_0^{\infty} 
\rho^{\beta_n+1} L^{(\beta_n)}_{k}(\rho) L^{(\beta_n)}_{j}(\rho) \, \e^{ -\rho}  \d \rho
  \\[4pt]
  &\,=  \frac{1}{\theta}\, \delta_{mn}\delta_{\ell\iota}
 \times \begin{cases} 
     \beta_n +2k+1, &j=k,  \\
     -  \sqrt{(k+1)\big(\beta_n +k+1\big)}, &j=k+1,  \\
     - \sqrt{(j+1)\big(\beta_n +j+1\big)}, &k=j+1, \\
     0, & \text{otherwise}.
   \end{cases}
 \end{split}
\end{equation}
 Then   \refe{orthnew3}   is a direct consequence of \refe{orthnew1} and \eqref{AAadd2}.  Note that \eqref{AAadd2} can be also obtained from \eqref{AAadd} with the understanding $\Gamma(z)=0$ if $z$ is 
 negative integer. 
 \end{proof}

 \subsection{Schr\"odinger eigenvalue problem with 
 a Coulomb potential}\label{coulomb} In what follows, we implement the Hermite spectral  method  for the three-dimensional  Schr\"odinger eigenvalue problem  \eqref{fNL} with  a Coulomb potential $V(\bx)=\frac{Z}{|\bs x|}$ with $Z<0$  for the hydrogen atom \cite{Schrodinger26}, that is, 
\begin{align}
\label{Kohn-Sham}
\Big(-\frac 12 \Delta +\frac{Z}{|\bx |}\Big) u(\bx) = \lambda u(\bx), \quad \bx\in \RR^3.
\end{align}
Numerical solution of \eqref{Kohn-Sham}  poses at least two challenges (i)  nonpositive definiteness of the variational form and (ii)  the singularity of the Coulomb potential.   To overcome these,    
 we shall propose an efficient and accurate  spectral method by using the M\"untz-type GHFs with a suitable parameter $\theta= \frac12$, in light of  the Coulomb potential. 

Define the approximation space
 \begin{align*}
\mathcal{W}_{\!N,K} = \text{span}\big\{\widehat{{\mathcal H}}_{k,\ell}^{\frac 12,n}(\kappa\bx): \; 0\le n\le N, \; 1\le \ell\le 2n+1,~
0\le k\le K,\; k, \ell, n\in {\mathbb N}_0\big\},
 \end{align*}
 where a scaling factor $\kappa>0$ is used to enhance the performance of the spectral approximation
 as in usual Hermite spectral methods in one dimension (see, e.g., \cite{Tang1993,ShenTao2011}).
  The spectral approximation scheme for \eqref{weakg} is to find $\lambda_{\!N,K}\in \RR$ and $u_{\!N,K}\in \mathcal{W}_{\!N,K}\setminus \{0\}$ such that
\begin{equation}\label{specscheme0}
 \mathcal{B}(u_{\!N,K}, v_{\!N,K})=\lambda_{\!N,K}(u_{\!N,K}, v_{\!N,K})_{\RR^3}, \quad \forall\, v_{\!N,K} \in \mathcal{W}_{\!N,K}.
\end{equation}
In real  implementation, we write
\begin{align*}
u_{\!N,K}(\bx)= \sum_{n=0}^{N} \sum_{\ell=1}^{2n+1}\sum_{k=0}^{K} \hat u_{k,\ell}^{n}\,\widehat{{\mathcal H}}_{k,\ell}^{\frac 1 2,n}( \kappa \bx),
\end{align*}
and denote
\begin{equation}\begin{split}
 \hat{\bs u}^n_{\ell} =\big(\hat{u}^n_{0,\ell}, \hat{u}^n_{1,\ell},\dots, \hat{u}^n_{K,\ell} \big)^t,\quad \bs{u}=\big(\hat{\bs u}_{1}^0, \hat{\bs u}_1^1,\hat{\bs u}_2^1,\hat{\bs u}_{3}^1,
 \cdots,\hat{\bs u}_1^N,\hat{\bs u}_2^N,\cdots,\hat{\bs u}_{2N+1}^N\big)^{t}.
\end{split}\end{equation}
With this ordering, we denote  the stiffness and the mass matrices  by $\bs S$  and  $\bs M,$ respectively,  with the entries given by  
\begin{align*}
 \mathcal{B}(\widehat{\mathcal H}^{\frac 1 2,n}_{k,\ell}(\kappa \cdot), \widehat {\mathcal H}^{\frac 1 2,m}_{j,\iota} (\kappa \cdot))
 =&\, \frac1{2\kappa} \Big[ (\nabla \,\widehat {\mathcal H}^{\frac 1 2,n}_{k,\ell}, \nabla \widehat {\mathcal H}^{\frac 1 2,m}_{j,\iota} )_{\RR^3}+\frac{1}{4} (\widehat {\mathcal H}^{\frac 1 2,n}_{k,\ell},  \widehat {\mathcal H}^{\frac 1 2,m}_{j,\iota} )_{\RR^3}\Big]
 \\
  &\, + \frac{Z}{\kappa^{2}} \big( |\bx|^{-1} \widehat {\mathcal H}^{\frac 1 2,n}_{k,\ell},  \widehat {\mathcal H}^{\frac 1 2,m}_{j,\iota} \big)_{\RR^3}
- \frac{1}{8\kappa} \big(\widehat {\mathcal H}^{\frac 1 2,n}_{k,\ell},  \widehat {\mathcal H}^{\frac 1 2,m}_{j,\iota} \big)_{\RR^3}
,
 \\
 \big( \widehat {\mathcal H}^{\frac 1 2,n}_{k,\ell}(\kappa \cdot),  \widehat {\mathcal H}^{\frac 1 2,m}_{j,\iota}(\kappa \cdot) \big)_{\RR^3}=&\,\frac{1}{\kappa^{3}} \big(  \widehat{\mathcal H}^{\frac 1 2,n}_{k,\ell}, \widehat {\mathcal H}^{\frac 1 2,m}_{j,\iota} \big)_{\RR^3}.
\end{align*}
Owing to  \refe{orthnew1} and \eqref{AAadd2} with $\theta=\frac12$, 
 both the stiffness matrix $\bs S$ and the mass matrix $\bs M$ are  
tridiagonal.

Consequently,  the scheme  \eqref{specscheme0} has an equivalent form in the following algebraic eigen-system:
\begin{equation}\label{eq:AlgS1}
 \bs S\bs{u} = \lambda_{\!N}\bs{M}\bs{u}.
\end{equation}
Interestingly,  the matrix $\bs S +\frac{\kappa^2}{8} \bs M$  is  diagonal, so we can rewrite \eqref{eq:AlgS1}  as 
\begin{align*}
\Big(\bs S +\frac{\kappa^2}{8} \bs M\Big) \bs u= \Big(\lambda_N+\frac{\kappa^2}{8}\Big) \bs M\bs u,
\end{align*}
which leads a more efficient  implementation.

  \begin{figure}[htb!]
\subfigure[$N=16$ and $\kappa=4$.]{
\begin{minipage}[t]{0.42\textwidth}
\centering
\rotatebox[origin=cc]{-0}{\includegraphics[width=0.85\textwidth]{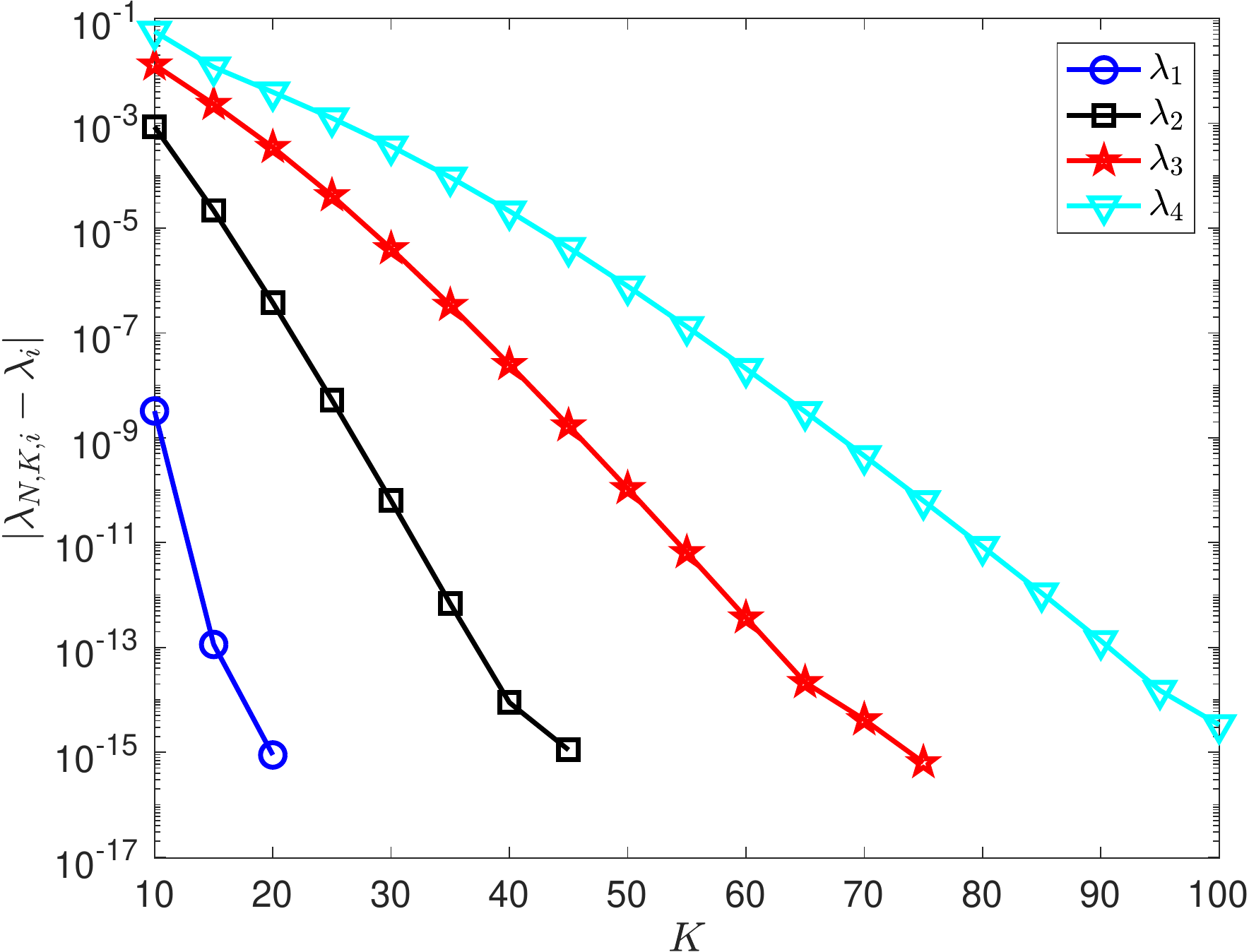}}
\end{minipage}}
\subfigure[$N=16$ and $\kappa=7/4$.]{
\begin{minipage}[t]{0.42\textwidth}
\centering
\rotatebox[origin=cc]{-0}{\includegraphics[width=0.85\textwidth]{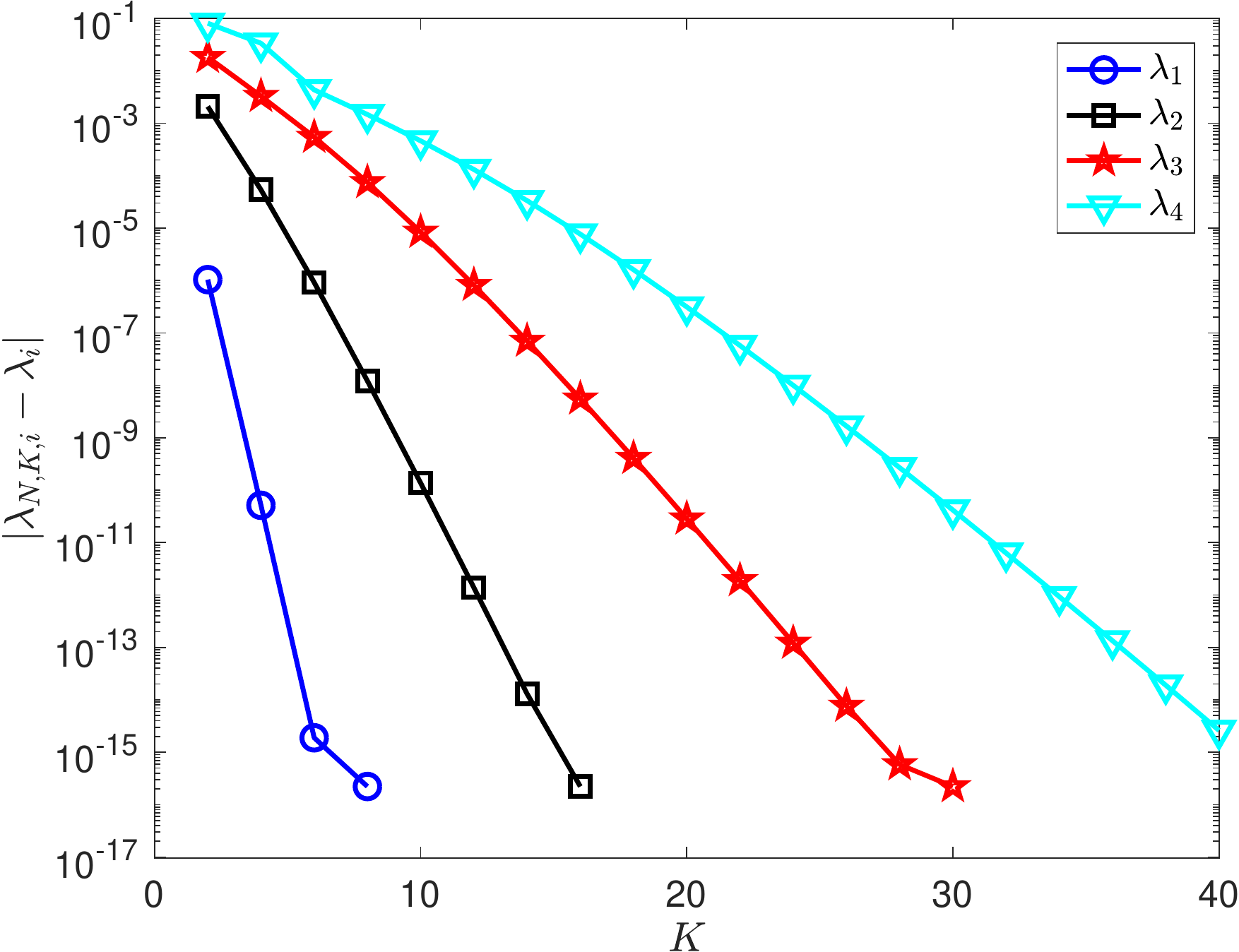}}
\end{minipage}}\vskip -5pt
%
\caption
{\small  The errors of  the smallest $4$ eigenvalues without counting multiplicities versus $K$ for solving \eqref{Kohn-Sham}  with  $Z=-1$.}\label{figKS1}
\end{figure}

In Figure  \ref{figKS1}, we plot the errors between the first $30$ (counted by multiplicity)   smallest numerical eigenvalues and exact eigenvalues in \eqref{eigenpairC} versus $K$ for fixed $N=16$ and two different scaling factors (so that the error of the truncation in angular directions  is negligible).  Observe that the errors decay exponentially  in terms of the cut-off number in the radial direction, along which the eigenfunctions are singular.   We also see that the scaling parameter  affects the convergence rate as the usual Hermite method  (cf.\! \cite{Tang1993}). 

%
%

\subsection{Schr\"odinger eigenvalue problem with 
a fractional power potential} \label{coulomb2} Note that 
for any given rational number $\frac{q}{p}>-2$ with $p\in \NN$ and $q\in \ZZ$, we can always rewrite it as 
\begin{equation}\label{munucond}
\frac{q}{p}=\frac{2\nu-2\mu}{\mu+1}  \quad {\rm with}\;\;\; \mu=2p-1\in \NN,\;\; \nu=2p+q-1\in \NN_0.
\end{equation}
In the sequel, we consider the following Schr\"odinger equation with a fractional power potential as follows
\begin{align}
\label{eq:Sch2}
-\frac 12 \Delta u(\bx) +Z |\bx |^{\frac{2\nu-2\mu}{\mu+1}}\,u(\bx) = \lambda u(\bx), \qquad \bx\in \RR^d,
\end{align}
where $\mu,\nu\in \NN_0.$  Hereafter,   we choose the M\"untz-type GHF approximation with 
$\theta=\frac1{\mu+1},$ to account for both the accuracy and efficiency.  Accordingly, 
 we define the approximation space
 \begin{align*}
\mathcal{W}_{\!N,K}^{d,\frac1{\mu+1}} = \text{span}\big\{\widehat{{\mathcal H}}_{k,\ell}^{\frac1{\mu+1},n}(\kappa\bx): \; 0\le n\le N, \; 1\le \ell\le a_n^d,~
0\le k\le K,\; k, \ell, n\in {\mathbb N}_0\big\}, \quad d\ge 2, 
 \end{align*}
 and for $d=1$, we can always assume $\mu$ is odd and then define the approximation space as
  \begin{align*}
\mathcal{W}_{\!N,K}^{1,\frac1{\mu+1}} = \text{span}\big\{\widehat{{\mathcal H}}_{k,1}^{\frac1{\mu+1},n}(\kappa\bx): \;   \frac{\mu+1}{2}\delta_{n,0}  \le k\le K,  \ n=0,1 \big\},
 \end{align*}
 where $\{\widehat{{\mathcal H}}_{k,1}^{\frac1{\mu+1},n}\}$  are understood as  the M\"untz-type GHFs 
 defined through  generalized  Laguerre polynomials  $L^{(\beta_0)}_k$ with the negative integer $\beta_0=-\frac{\mu+1}{2}$ (cf. \!\cite{LWL17}). This turns out  important to deal with 
 the  strong singularities at the origin  to ensure $u(0)=0$ in one dimension. 

 The generalized Hermite spectral method for \eqref{weakg} is to find $\lambda_{\!N,K}\in \RR$ and $u_{\!N,K}\in \mathcal{W}^{d,\frac1{\mu+1}}_{\!N,K}\setminus \{0\}$ such that
\begin{equation}\label{specscheme}
 \mathcal{B}(u_{\!N,K}, v_{\!N,K})=\lambda_{\!N,K}(u_{\!N,K}, v_{\!N,K})_{\RR^d}, \quad \forall\,  v_{\!N,K} \in \mathcal{W}^{d,\frac1{\mu+1}}_{\!N,K}.
\end{equation}
In the implementation,  we write
\begin{align*}
u_{\!N,K}(\bx)= \sum_{n=0}^{N} \sum_{\ell=1}^{a_n^d}\sum_{k=0}^{K} \hat u_{k,\ell}^{n}\,\widehat{{\mathcal H}}_{k,\ell}^{\frac{1}{\mu+1},n}( \kappa \bx),
\end{align*}
and denote
\begin{equation}\begin{split}
 \hat{\bs u}^n_{\ell} =\big(\hat{u}^n_{0,\ell}, \hat{u}^n_{1,\ell},\dots, \hat{u}^n_{K,\ell} \big)^t,\quad \bs{u}=\big(\hat{\bs u}_1^0,\hat{\bs u}_2^0,\cdots,\hat{\bs u}_{a_0^d}^0,\hat{\bs u}_1^1,\hat{\bs u}_2^1,\cdots,\hat{\bs u}_{a_1^d}^1,
 \cdots,\hat{\bs u}_1^N,\hat{\bs u}_2^N,\cdots,\hat{\bs u}_{a_N^d}^N\big)^{t}.
\end{split}\end{equation}
The corresponding algebraic eigen-system  of   \eqref{specscheme} is 
\begin{equation}\label{eq:AlgS2}
 \bs S\bs{u} = \lambda_{\!N}\bs{M}\bs{u}.
\end{equation}
In view of orthogonality \eqref{orthnew3} and  \eqref{orthnew2}, we find that  for any $q\in \NN_0$,
 \begin{align*}
& \big(|\bs x |^{\frac{2q-2\mu}{\mu+1}} \widehat {\mathcal H}^{\frac1{\mu+1},n}_{k,\ell}(\kappa \cdot),  \widehat {\mathcal H}^{\frac1{\mu+1},m}_{j,\iota}(\kappa \cdot) \big)_{\RR^d}
=\kappa^{-d} \big(  \widehat{\mathcal H}^{\frac1{\mu+1},n}_{k,\ell}, \widehat {\mathcal H}^{\frac1{\mu+1},m}_{j,\iota} \big)_{\RR^d}
\\
& =\frac{\mu+1}2  \kappa^{-d}\, c^{\frac1{\mu+1},d}_{k,n}\,c^{\frac1{\mu+1},d}_{j,n} \,\delta_{mn}\,\delta_{\ell\iota}\sum_{p=\max(j-q,k-q,0)}^{\min(k,j)}\frac{\Gamma(k-p-q)\Gamma(j-p-q)\Gamma(p+\beta_n+ q+1)}{\Gamma^2(-q)\, (k-p)!\,(j-p)!\,p!}
\\
& =\frac{\mu+1}2  \kappa^{-d}\, c^{\frac1{\mu+1},d}_{k,n}\,c^{\frac1{\mu+1},d}_{j,n} \,\delta_{mn}\,\delta_{\ell\iota}\sum_{p=\max(j-q,k-q,0)}^{\min(k,j)}\frac{(-q)_{k-p}(-q)_{j-p} \Gamma(p+\beta_n+ q+1)}{ (k-p)!\,(j-p)!\,p!}.
\end{align*}
Furthermore, one has 
\begin{align*}
 \mathcal{B}(\widehat{\mathcal H}^{\frac1{\mu+1},n}_{k,\ell}(\kappa \cdot), \widehat {\mathcal H}^{\frac1{\mu+1},m}_{j,\iota} (\kappa \cdot))
 & = \frac12\, \kappa^{2-d} (\nabla \,\widehat {\mathcal H}^{\frac1{\mu+1},n}_{k,\ell}, \nabla \widehat {\mathcal H}^{\frac1{\mu+1},m}_{j,\iota} )_{\RR^d}
 \\ &\quad + Z\, \kappa^{-\frac{2\nu-2\mu}{\mu+1}-d} ( |\bx|^{\frac{2\nu-2\mu}{\mu+1}} \widehat {\mathcal H}^{\frac1{\mu+1},n}_{k,\ell},  \widehat {\mathcal H}^{\frac1{\mu+1},m}_{j,\iota} )_{\RR^d} .
\end{align*} 
These indicate that 
the stiffness matrix $\bs S$ is a  sparse banded  matrix  with a   bandwidth $\max(\nu,1)$,
and   the   mass matrix $\bs M$ is also a sparse banded matrix with a  bandwidth $\mu$.

 \begin{figure}[!h]
\subfigure[$d=4$, $Z=1$,  $\mu=3$, $\nu=5$ and $\kappa=500$.]{
\begin{minipage}[t]{0.42\textwidth}
\centering
\rotatebox[origin=cc]{-0}{\includegraphics[width=0.85\textwidth]{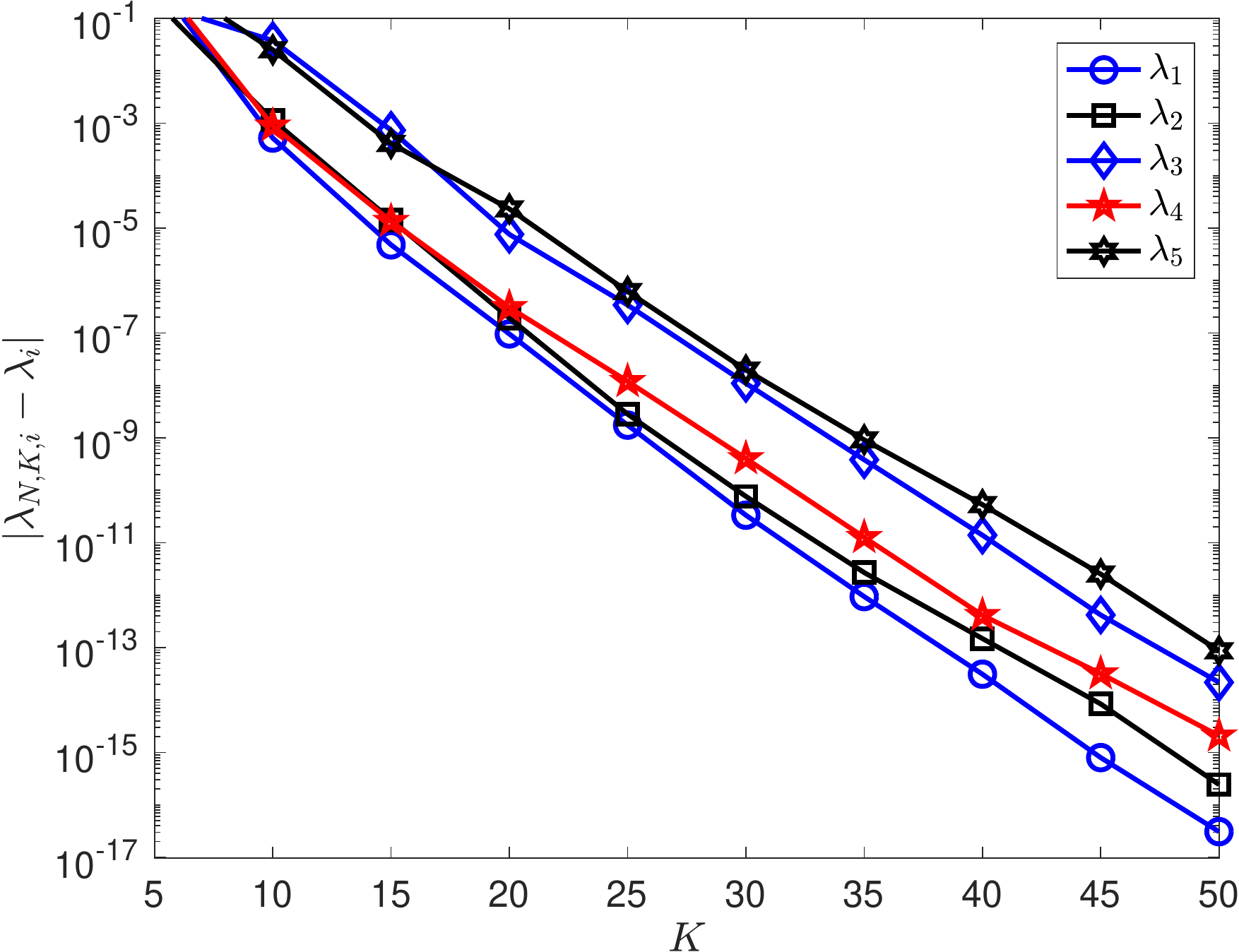}}
\end{minipage}}
\subfigure[$d=3$, $Z=1$, $\mu=1$, $\nu=2$ and $\kappa=2$.]{
\begin{minipage}[t]{0.42\textwidth}
\centering
\rotatebox[origin=cc]{-0}{\includegraphics[width=0.85\textwidth]{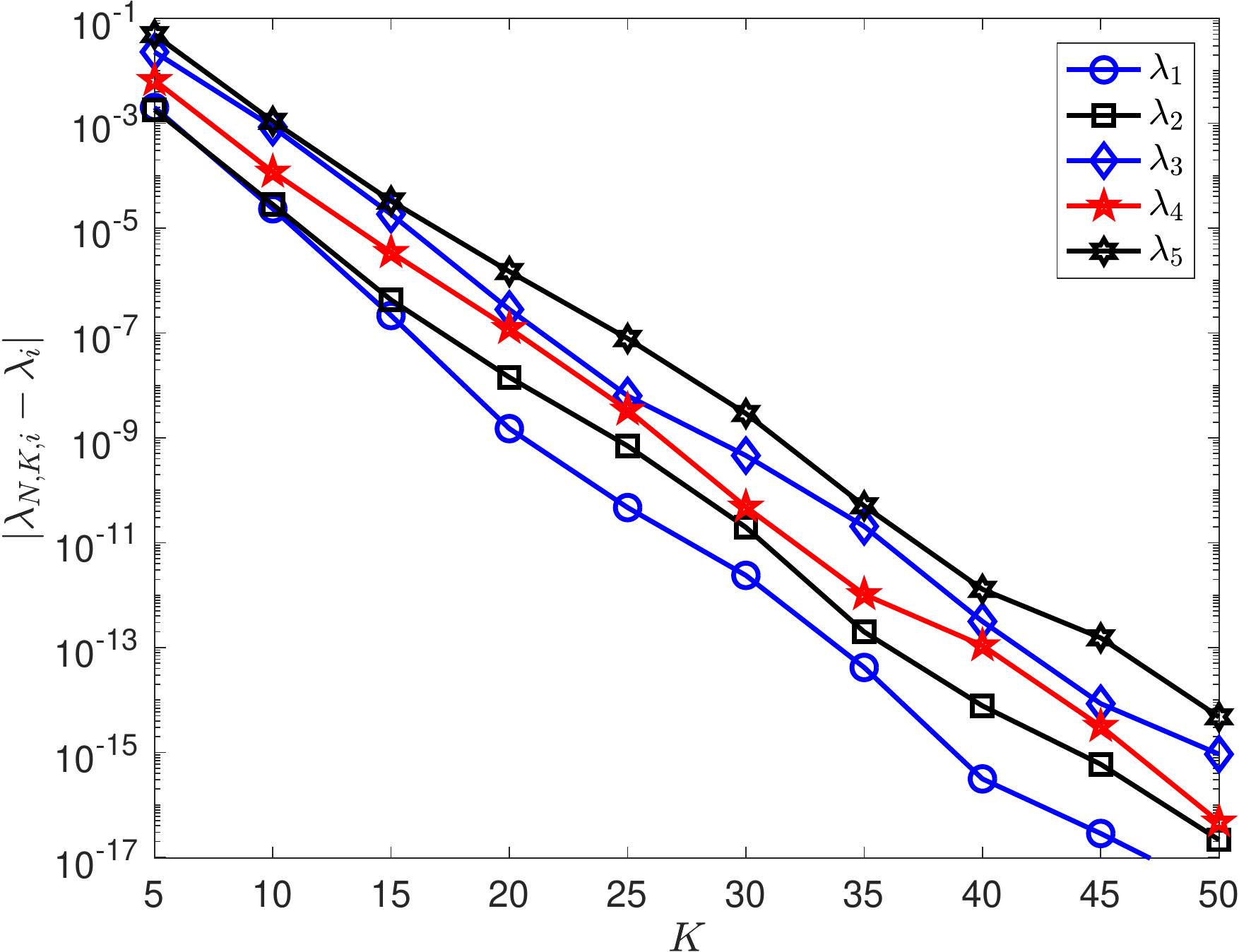}}
\end{minipage}}
\vskip -5pt

\subfigure[$d=2$, $Z=3$,  $\mu=1$, $\nu=4$ and $\kappa=10$.]{
\begin{minipage}[t]{0.42\textwidth}
\centering
\rotatebox[origin=cc]{-0}{\includegraphics[width=0.85\textwidth]{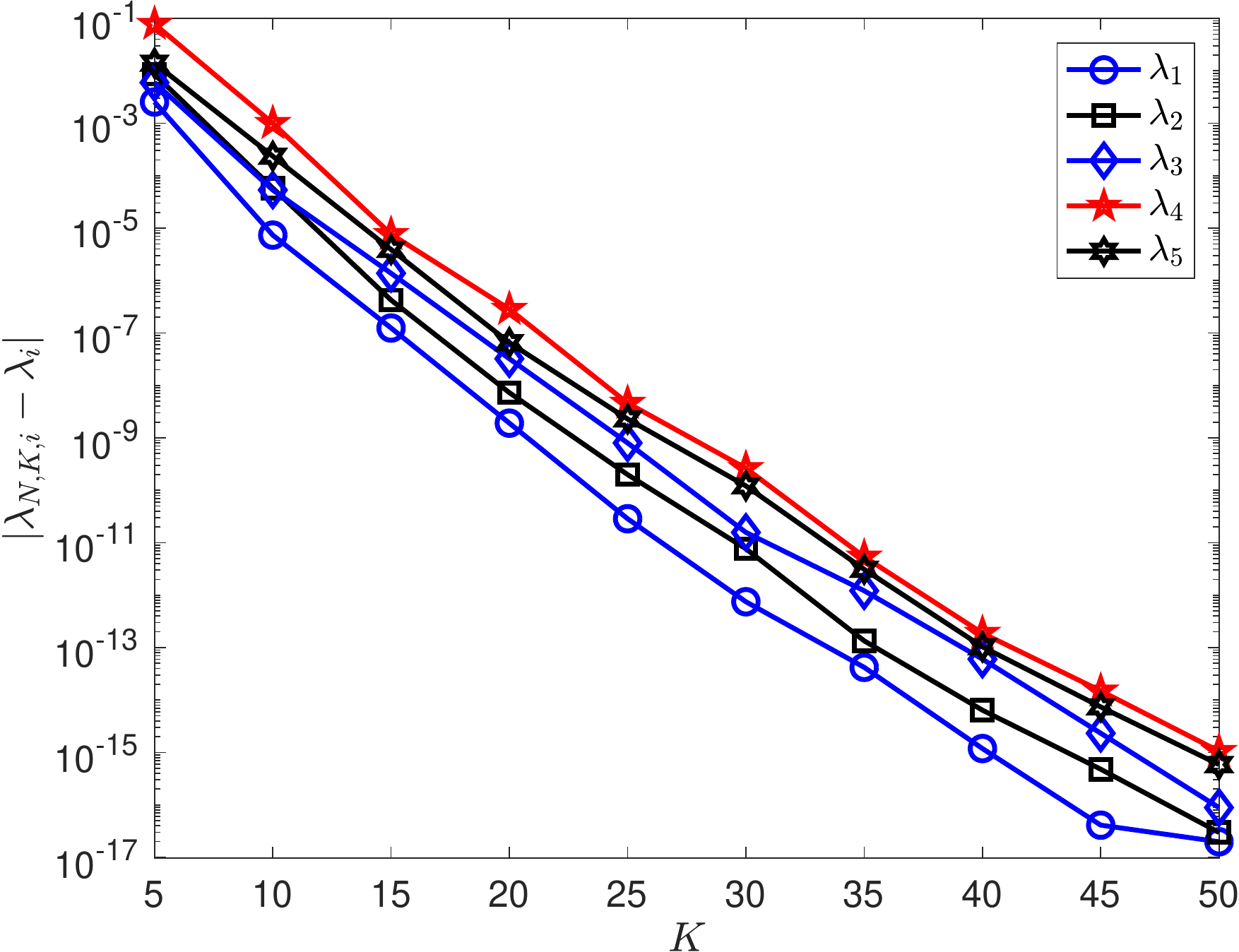}}
\end{minipage}}
\subfigure[$d=1$, $Z=-3$, $\mu=3$, $\nu=2$ and $\kappa=70$.]{
\begin{minipage}[t]{0.42\textwidth}
\centering
\rotatebox[origin=cc]{-0}{\includegraphics[width=0.85\textwidth]{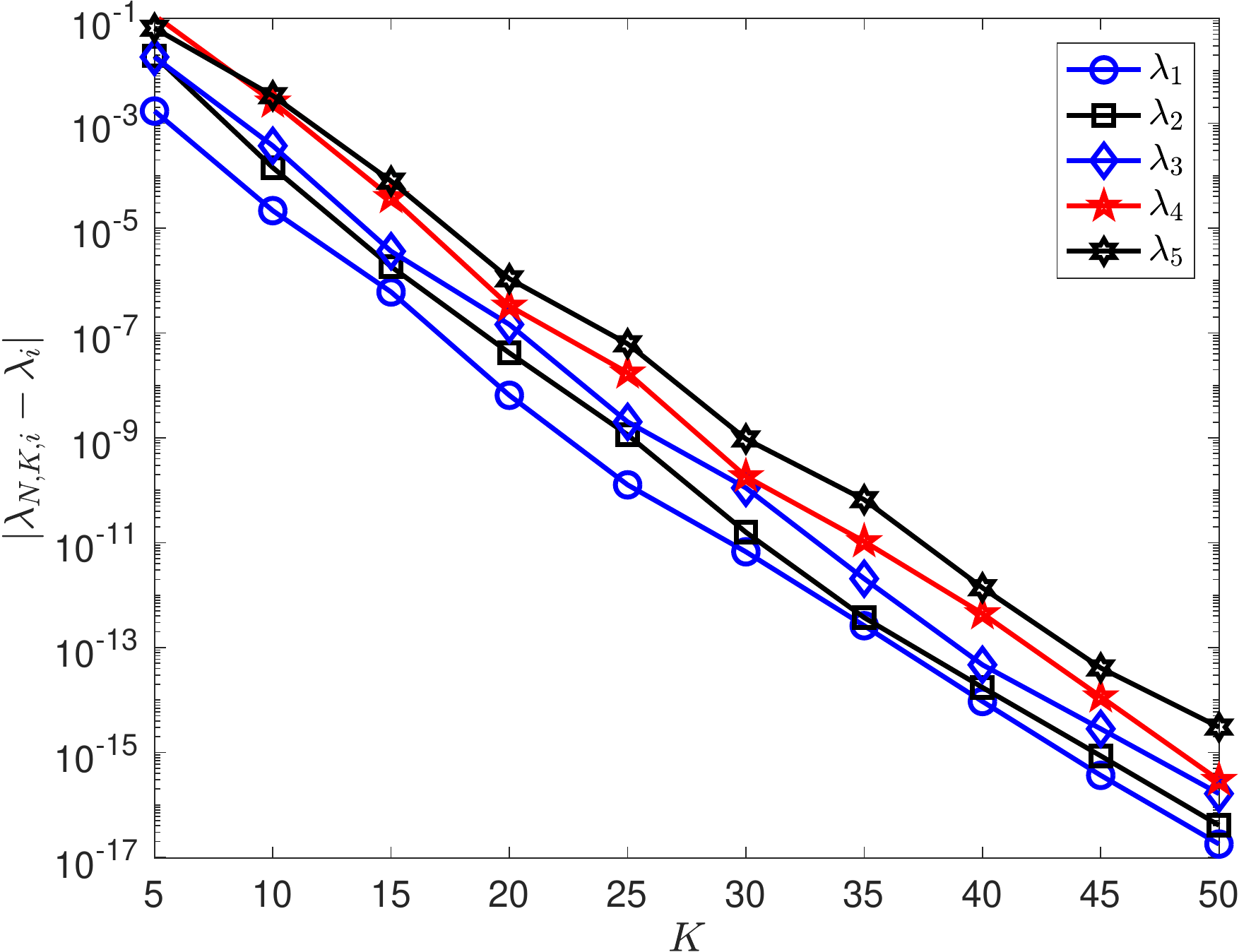}}
\end{minipage}}
\vskip -5pt
\caption
{\small  The errors of  the smallest $5$ eigenvalues without counting multiplicities versus $K$ for solving \eqref{eq:Sch2} with $N=10$.}\label{figKS2}
\end{figure}

In the numerical tests, we fix $N=10$, choose different scaling factor $\kappa$ and test for different $Z$, $\mu, \nu$ and  dimensions. Numerical errors between the smallest eigenvalues  without counting multiplicities and the reference eigenvalues (obtained by the scheme with large $N$ and $K$)
are depicted  in Figure \ref{figKS2}. Exponential orders of convergence are clearly observed in all cases, which demonstrate the effectiveness of the new Hermite spectral method.

\medskip 

\noindent{\bf Acknowledgement}:  The first author would like to thank  Beijing Computational Science Research Center  for hosting his visit devoted to this collaborative work. The fourth author is grateful to Professor Jie Shen at Purdue University for valuable suggestion.

\begin{appendix}
\setcounter{equation}{0}  \setcounter{thm}{0}  \setcounter{pro}{0}  \setcounter{lem}{0} 
\renewcommand{\theequation}{A.\arabic{equation}}
\renewcommand{\thethm}{A.\arabic{thm}}
\renewcommand{\thepro}{A.\arabic{pro}}\renewcommand{\thelem}{A.\arabic{lem}}

 \renewcommand{\theequation}{A.\arabic{equation}}
 \section{The proof of Theorem \ref{lemghp}}\label{appb}


We first recall the orthogonality  (cf.\! \cite[(11.6)]{chihara1955}) 
  \begin{equation}\label{GHermpolyorth}
  \dint_{\RR} H_m^{(\mu)}(x)H_n^{(\mu)}(x) |x|^{2\mu}\,\e^{-x^2}\,\d x=\gamma_n^{(\mu)} \delta_{mn},\quad 
  \gamma_n^{(\mu)}=2^{2n}\,\Big[\frac n 2 \Big]!\, \Gamma\Big(\Big[\frac {n+1} 2 \Big]+\mu+\frac 1 2\Big).
 \end{equation}
According to \cite[P. 42]{chihara1955},  we have 
\begin{equation}\label{deri_GHP}
\partial_x H_{n}^{(\mu)}(x)=2nH_{n-1}^{(\mu)}(x)+2(n-1)\theta_n \, x^{-1}H_{n-2}^{(\mu)}(x),\quad n\ge 1,
\end{equation}
where $\theta_{2k}=0$ and $\theta_{2k+1}=2\mu$  as in \eqref{problem25}.
In particular, for $\mu>-\frac 1 2, $
\begin{equation}\label{deri_GHP2}
\partial_x H^{(\mu)}_{2k}(x)=4kH^{(\mu)}_{2k-1}(x),\quad k\ge 1.
\end{equation}

We first show the modified derivative (cf. \eqref{higherOrder}) formula:   for $k\ge m,$
\begin{equation}\label{evenk}
D_x^m H^{(\mu)}_{2k}(x) =d_k^{(m)} H^{(\mu+m-1)}_{2k-2m+1}(x), 
\quad 
D_x^m H^{(\mu)}_{2k+1}(x) =d_k^{(m)} H^{(\mu+m)}_{2k-2m+1}(x),  \;\;\; d_k^{(m)}=\frac{4^m k!}{(k-m)!}.
\end{equation}
For this purpose, we recall the recurrence relation (cf. \cite[P. 609]{SCF1964}):
\begin{equation}\label{H2ksk}
 2xH^{(\mu+1)}_{2k}(x)=H^{(\mu)}_{2k+1}(x), \quad k\ge 0,
\end{equation}
which,  together with \eqref{deri_GHP2},  implies 
\begin{equation}\label{Henew}
D_x H^{(\mu)}_{2k+1}(x)=\partial_x \Big\{\frac{1}{2x}H^{(\mu)}_{2k+1}(x)\Big\}=\partial_x H^{(\mu+1)}_{2k}(x)=4kH^{(\mu+1)}_{2k-1}(x). 
\end{equation}
Thus,  we obtain from  \eqref{higherOrder} that 
\begin{equation}\label{Henew2}
D_x^2 H^{(\mu)}_{2k+1}(x)=4kD_x H^{(\mu+1)}_{2k-1}(x)=4^2k(k-1) H^{(\mu+2)}_{2k-3}(x). 
\end{equation}
Using this relation repeatedly  yields the second identity in \eqref{evenk}. 
We now consider the first  identity. For $m=1,$ it coincides with  \eqref{deri_GHP2}, so by 
\eqref{Henew}, 
\begin{equation}\label{deri_GHP20}
D_x^2 H^{(\mu)}_{2k}(x)=4k D_x H^{(\mu)}_{2k-1}(x)=4k \partial_x \Big\{\frac{1}{2x}H^{(\mu)}_{2k-1}(x)\Big\}
=4^2k(k-1) H^{(\mu+1)}_{2k-3}(x),
\end{equation}
which leads to the first identity by taking higher modified derivatives and the second identity in  \eqref{evenk}. 
For the orthogonal projection defined in \eqref{ll2orth}, we can write
 \begin{equation}\label{1dproj}
\Pi^{(\mu)}_{N} u(x)=\sum_{n=0}^{N} \tilde u_{n} H^{(\mu)}_{n}(x)=\sum_{k=0}^{[\frac{N}2]} \tilde u_{2k} H^{(\mu)}_{2k}(x)+\sum_{k=0}^{[\frac{N}2]} \tilde u_{2k+1} H^{(\mu)}_{2k+1}(x),
\end{equation}
with  
 \begin{equation*}
 \tilde u_{n}=\frac{1}{\gamma^{(\mu)}_{n}} \int_{\RR} u(x) H^{(\mu)}_{n}(x) \chi^{(\mu)}(x)\, \d x.
\end{equation*} 
We only need to prove show the result with $m\geq 1$, as $m=0$ is obvious. For simplicity, we first assume that $N$ is odd. 
 It is clear that by \refe{1dproj}, 
 \begin{equation} \label{1dproof1}
\|\Pi^{(\mu)}_{N}u-u\|^2_{\chi^{(\mu)}}=\|\Pi^{(\mu)}_{N}u_{\rm e}-u_{\rm e}\|^2_{\chi^{(\mu)}}+
\|\Pi^{(\mu)}_{N}u_{\rm o}-u_{\rm o}\|^2_{\chi^{(\mu)}},
\end{equation}	
where we decompose  $u(x)$ into even and odd parts as $u_{\rm e}(x)$ and $u_{\rm o}(x)$.
We now deal with the first term.  By \eqref{GHermpolyorth} and \eqref{evenk}, we have the orthogonality 
\begin{equation}\label{derim10}
\displaystyle \int_{\RR} D_{x}^{m} H_{2k}^{(\mu)}(x)\, D_{x}^{m} H_{2l}^{(\mu)}(x) \chi^{(\mu+m-1)}(x)\, \d x=h_{2k, m}^{(\mu)}\, \delta_{k, l},
\end{equation}
where for $k\geq m$, 
\begin{equation}\label{hnk1} 
\begin{split} 
h_{2k,m}^{(\mu)}= (d_k^{(m)})^2\gamma_{2k-2m+1}^{(\mu+m-1)}=\frac{2^{4k+2}\, \Gamma(k+\mu+\frac12) (k!)^2}{(k-m)!}. 
 \end{split}
\end{equation}
Thus, by the Parseval's identity, we have 
\begin{equation*}\label{dxmu}
\|D_{x}^{m} u_{\rm e}\|_{\chi^{(\mu+m-1)}}^{2}=\sum_{k=m}^{\infty} h_{2k,m}^{(\mu)}|\tilde u_{2k}|^{2}. 
\end{equation*}
In view of  \refe{derim10}, we obtain from  \refe{hnk1} that  for $m\ge 1,$
\begin{equation}\label{uests}
\begin{split}
\big\|\Pi_{N}^{(\mu)} u_{\rm e}-u_{\rm e}\big\|_{\chi^{(\mu)}}^{2}&=\sum_{k=\frac{N+1}2}^{\infty} \gamma_{2k}^{(\mu)}|\tilde u_{2k}|^{2} \leq \max _{k \geq \frac{N+1}2}\Big\{\frac{\gamma_{2k}^{(\mu)}}{h_{2k,m}^{(\mu)}}\Big\} \sum_{k=\frac{N+1}2}^{\infty} h_{2k,m}^{(\mu)}|\tilde u_{2k}|^{2}
 \\&\leq \frac{\gamma_{N+1}^{(\mu)}}{h_{N+1,m}^{(\mu)}}\big\|D_{x}^mu_{\rm e}\big\|_{\chi^{(\mu+m-1)}}^{2}
\leq \frac{(\frac{N+1}2-m)!}{2^2(\frac{N+1}2)!}\big\|D_{x}^mu_{\rm e}\big\|_{\chi^{(\mu+m-1)}}^{2}.
\end{split}\end{equation}
Similarly, by \eqref{evenk}  and \eqref{GHermpolyorth}, we have the orthogonality 
\begin{equation}\label{derim10o}
\displaystyle \int_{\RR} D_{x}^{m} H_{2k+1}^{(\mu)}(x)\, D_{x}^{m} H_{2l+1}^{(\mu)}(x) \chi^{(\mu+m)}(x)\, \d x=h_{2k+1, m}^{(\mu)}\, \delta_{k, l},
\end{equation}
where for $k\geq m$, 
\begin{equation*}\label{hnk1o} 
\begin{split} 
h_{2k+1,m}^{(\mu)}= (d_k^{(m)})^2\gamma_{2k-2m+1}^{(\mu+m)}=\frac{2^{4k+2}\, \Gamma(k+\mu+\frac32) (k!)^2}{(k-m)!}. 
 \end{split}
\end{equation*}
Then, following the same lines as above, we can show 
\begin{equation}\label{newPiN}
\begin{split}
\big\|\Pi_{N}^{(\mu)} u_{\rm o}-u_{\rm o}\big\|_{\chi^{(\mu)}}^{2}&
 \leq \frac{\gamma_{N+2}^{(\mu)}}{h_{N+2,m}^{(\mu)}}\|D_{x}^mu_{\rm o}\|_{\chi^{(\mu+m)}}^{2}
 \leq \frac{(\frac{N+1}2-m)!}{(\frac{N+1}2)!}\|D_{x}^mu_{\rm o}\|_{\chi^{(\mu+m)}}^{2}.
\end{split}\end{equation}
Thus, a combination of \eqref{1dproof1},  \eqref{uests} and \eqref{newPiN} leads to the estimate \eqref{resultghp} with odd $N.$ 
For even $N,$ we can obtain the same estimate but with $N/2$ in place of $(N+1)/2$ in the upper bound. 

Now, we turn to the proof of  \eqref{errest0}.  If $u e^{\frac {x^2} 2} \in {{\mathcal B}}_{\mu}^{m}(\RR),$  we find from \eqref{proj1d} that  
\begin{equation}\label{newrelation}
\big\|u-\widehat{\Pi}^{(\mu)}_{N} u\big\|_{\omega^{(\mu)}}= \big\| u\e^{\frac{x^2}{2}} -{\Pi}^{(\mu)}_{N} (u \e^{\frac{x^2}{2}})\big\|_{\chi^{(\mu)}}.
\end{equation}
Then the estimate  \eqref{errest0} is a direct consequence of   \eqref{resultghp}.

\end{appendix}


\begin{thebibliography}{10}




 \bibitem{Agranovich2015Book}
M. S. Agranovich.
\newblock {\em {Sobolev spaces, their generalizations and elliptic problems in smooth and Lipschitz domains}}.
\newblock  Springer Monographs in Mathematics. Springer, Cham, 2015.





 
\bibitem{AAR1999}
G. E. Andrews, R. Askey and R. Roy. 
\newblock {\em Special functions}. 
\newblock {Cambridge,} 1999.

\bibitem{Askey1975}
   R. Askey.
   \newblock {\em  Orthogonal polynomials and special functions}.
   \newblock { Society for Industrial and Applied Mathematics, }1975.
   
%


 
 
%

   \bibitem{BLS2009}
  W. Bao, H. Li and J. Shen. A generalized {L}aguerre-{F}ourier-{H}ermite pseudospectral method for computing the dynamics of rotating {B}ose-{E}instein condensates.  \newblock{SIAM J. Sci. Comput.}, 31(5): 3685-3711, 2009. 
  
   

\bibitem{Bao2019} 
W. Bao, X. Ruan, J. Shen and C. Sheng. Fundamental gaps of the fractional {S}chr\"{o}dinger operator. \newblock{Commun. Math. Sci.}, 17(2): 447-471, 2019.

 \bibitem{Bao2020} 
 W. Bao, L. Chen, X. Jiang and Y. Ma. A {J}acobi spectral method for computing eigenvalue gaps and their distribution statistics of the fractional {S}chr\"{o}dinger operator.
\newblock{J. Comput. Phys.}, 421: 109733, 2020.

  
  
  
 \bibitem{burnett1936}  
  D. Burnett. The distribution of molecular velocities and the mean motion in a non-uniform gas. 
   \newblock{Proc. London Math. Soc.}, 40(1): 382-435, 1936.
  
  
   \bibitem{Cai2018} 
   Z. Cai, Y. Fan and Y. Wang. Burnett spectral method for the spatially homogeneous Boltzmann equation. 
    \newblock{Comput. \& Fluids}, 200: 104456, 2020.
  
  



 




\bibitem{chihara1955}
T. Chihara.
\newblock {\em {Generalized Hermite polynomials}}. 
PhD Thesis, 
\newblock Purdue University, 1955.

\bibitem{chihara2011}
T. Chihara.
\newblock {\em {An introduction to orthogonal polynomials}}.
 \newblock New York-London-Paris, 1978. 



\bibitem{Dai2013}
F.~ Dai and Y.~ Xu.
\newblock {\em {Approximation theory and harmonic analysis on spheres and balls}}.
\newblock Springer-Verlag, 2013.



\bibitem{Nezza2012BSM}
 E. Di Nezza, G. Palatucci and E. Valdinoci. 
Hitchhiker's guide to the fractional Sobolev spaces. 
 \newblock{Bull. Sci. Math.}, 136(5): 521-573, 2012.







\bibitem{zuazo2001fourier}
Z. J. Duoandikoetxea. \emph{Fourier analysis}, vol. 29,  American Mathematical Society. 2001.




\bibitem{Gerald2014}
T. Gerald, {\em Mathematical methods in quantum mechanics
with applications to Schrödinger operators.} Second edition. Graduate Studies in Mathematics, 157. American Mathematical Society, Providence, RI, 2014.

\bibitem{Gradshteyn2015Book}
{\sc I.~S. Gradshteyn and I.~M. Ryzhik}, {\em Table of Integrals, Series, and
  Products}, Elsevier/Academic Press, Amsterdam, eighth~ed., 2015.
\newblock Translated from the Russian, Translation edited and with a preface by
  Daniel Zwillinger and Victor Moll. 






 
 



\bibitem{Hou2017} 
D. Hou and C. Xu. A fractional spectral method with applications to some singular problems. \newblock{Adv. Comput. Math.}, 43(5): 911-944, 2017.

\bibitem{Hu2020} 
Z. Hu and Z. Cai. Burnett spectral method for high-speed rarefied gas flows. \newblock{SIAM J. Sci. Comput.}, 42(5): 1193-1226, 2020. 

 
 
 



\bibitem{laskin2000fractional}
N. Laskin. {Fractional quantum mechanics and L{\'e}vy path integrals}.
 \newblock{Physics Letters A}, 268(4-6): 298-305, 2000. 

\bibitem{LWL17} 
F. Liu, Z. Wang and H. Li. A fully diagonalized spectral method using generalized Laguerre functions on the half line. \newblock{Adv. Comput. Math.}, 43(6): 1227-1259, 2017.

\bibitem{Lubich2008book}
C. Lubich.  \newblock {\em From quantum to classical molecular dynamics: reduced models and numerical analysis.}
\newblock {European Mathematical Society, Z\"{u}rich,} 2008.

\bibitem{2018Ma}
S. Ma, H. Li and Z. Zhang.  Novel spectral methods for Schr\"{o}dinger equations with an inverse square potential on the whole space. 
 \newblock{Discrete Contin. Dyn. Syst. Ser. B }, 24(4): 1589-1615,  2019.



\bibitem{mao2017}
Z. Mao and J. Shen. Hermite spectral methods for fractional PDEs in unbounded domains. 
 \newblock{SIAM J. Sci. Comput.}, 39(5): A1928-A1950, 2017.


\bibitem{twoclasses}
M. Masjed-Jamei and W. Koepf.  Two classes of special functions using Fourier transforms of generalized ultraspherical and generalized Hermite polynomials. 
 \newblock{Proc. Amer. Math. Soc.}, 140 (6): 2053-2063, 2012.

\bibitem{Muntz} 
Ch. H. M\"untz.
 \"Uber den Approximationssatz von Weierstrass, 
 in  \newblock{\em H. A. Schwarz's Festschrift}, 
 Berlin, pp. 303--312,  1914.
 


\bibitem{Osherov2017}
V. I. Osherov and V. G. Ushakov. 
Analytical solutions of the Schr\"odinger equation for a hydrogen atom in a uniform electric field. 
\newblock{Phys. Rev. A.}, 95: 023419, 2017.

\bibitem{Pauling1935}
 L. Pauling and E. B. Wilson. {\em Introduction to quantum
mechanics with applications to Chemistry.}  McGraw-Hill,
1935.  

\bibitem{Pitaevskii 2003book}
L. P. Pitaevskii and S. Stringari.  \newblock {\em Bose-Einstein condensation.}
\newblock {The Clarendon Press, Oxford University Press, Oxford,} 2003.

\bibitem{rose1994}
M. Rosenblum. Generalized Hermite polynomials and the Bose-like oscillator calculus. Nonselfadjoint operators and related topics. Oper. Theory Adv. Appl., 73: 369-396, 1994. 

\bibitem{Rosler1998}
M. R\"osler. Generalized Hermite polynomials and the heat equation for Dunkl operators. 
 \newblock{Comm. Math. Phys.}, 192(3): 519-542, 1998.


\bibitem{SCF1964} T. S. Shao, T. C. Chen, and R. M. Frank.
Tables of zeros and Gaussian weights of certain associated Laguerre polynomials and the related generalized Hermite polynomials. 
 \newblock{Math. Comput.}, 18: 598-616, 1964.




\bibitem{ShenTao2011} 
J. Shen, T. Tang and L.-L.  Wang, 
\newblock {\em Spectral methods: algorithms, analysis and applications}.
\newblock {Springer,} 2011.

\bibitem{ShenWang2011} 
J. Shen and Y. Wang.
M\"untz-Galerkin methods and applications to mixed Dirichlet-Neumann boundary value problems.
 \newblock{SIAM J. Sci. Comput.}, 38: A2357-A2381, 2016.


\bibitem{sheng2019}
C. Sheng, J. Shen, T. Tang, L.-L. Wang and H. Yuan. Fast Fourier-like mapped Chebyshev spectral-Galerkin methods for PDEs with integral fractional Laplacian in unbounded domains.
 \newblock{SIAM J. Numer. Anal.}, 58(5): 2435-2464, 2020.



\bibitem{Schrodinger26}
 E.  Schr\"odinger.  Quantisierung als Eigenwertproblem. 
 \newblock{Annalen der Physik}. 
 384(4): 361--377, 1926.


\bibitem{SF1973}
G. Strang and G. Fix. 
\newblock {\em An analysis of the finite element method}. 
\newblock {Prentice-Hall Inc., Englewood Cliffs, N. J., prentice-Hall Series in Automatic Computation,} 1973. 



 
\bibitem{Szasz}
O. Sz\'asz.
\"Uber die Approximation stetiger funktionen durch lineare aggregate von potenzen. 
\newblock{\em Math. Ann.}, 77: 482-496, 1916.

\bibitem{GS1939} G. Szeg\"{o}. 
\newblock {\em Orthogonal polynomials.} 
\newblock {American Mathematical Society, Providence,} 1939.

\bibitem{Tang1993} 
T. Tang. The Hermite spectral method for Gaussian-type functions. \newblock{SIAM J. Sci. Comput.}, 14(3): 594-606, 1993.

\bibitem{2018tang}
T. Tang, H. Yuan and T. Zhou. Hermite spectral collocation methods for fractional PDEs in unbounded domains. 
 \newblock{Commun. Comput. Phys.}, 24(4): 1143-1168, 2018.




 
\bibitem{yurova2020}
A. Yurova. 
\newblock {\em {Generalized anisotropic Hermite functions and their applications}}. 
PhD Thesis, \newblock Technische Universit{\"a}t M{\"u}nchen, 2020. 
 

\bibitem{2018ZhangLi}
J. Zhang, H. Li, L.-L.  Wang and Z. Zhang.  Ball prolate spheroidal wave functions in arbitrary dimensions.
 \newblock{ Appl. Comput. Harmon. Anal.}, 48(2): 539-569, 2020.

\bibitem{zhang2015}
Y. Zhang, X. Liu, M.  Beli{\'c},  et. al..  Propagation dynamics of a light beam in a fractional Schr{\"o}dinger equation.
 \newblock{Phys. Rev. Lett.}, 115(18): 180403, 2015.

\end{thebibliography}
\end{document}